\newcounter{mylisti} \newcounter{mylistii}
\newcounter{nest}
\newcommand{\defaultlabel}{}
\newenvironment{mylist}[1]{%
  \addtocounter{nest}{1}
  \ifthenelse{\value{nest}=1}{%
    \renewcommand{\defaultlabel}{(\roman{mylisti})\hfill}}{%
    \renewcommand{\defaultlabel}{(\alph{mylistii})\hfill}}
  \begin{list}{\defaultlabel}{%
      \ifthenelse{\value{nest}=1}{\usecounter{mylisti}}{%
        \usecounter{mylistii}}
      
      \addtolength{\topsep}{1ex}
      \addtolength{\itemsep}{0.5ex}
      \settowidth{\labelwidth}{#1}
      \setlength{\leftmargin}{\labelwidth}
      \addtolength{\leftmargin}{\labelsep}}}{\addtocounter{nest}{-1}
\end{list}}
\newcommand{\bn}{\ensuremath{\mathbb N}}
\newcommand{\br}{\ensuremath{\mathbb R}}
\newcommand{\cA}{\ensuremath{\mathcal A}}
\newcommand{\wt}{\ensuremath{\tilde{w}}}
\newcommand{\yt}{\ensuremath{\tilde{y}}}
\newcommand{\Et}{\ensuremath{\tilde{E}}}
\newcommand{\Gt}{\ensuremath{\tilde{G}}}
\newcommand{\Qt}{\ensuremath{\tilde{Q}}}
\newcommand{\Yt}{\ensuremath{\tilde{Y}}}
\newcommand{\Xt}{\ensuremath{\tilde{X}}}
\newcommand{\bbar}{\ensuremath{\bar{b}}}
\newcommand{\xb}{\ensuremath{\bar{x}}}
\newcommand{\Cb}{\ensuremath{\overline{C}}}
\newcommand{\Kb}{\ensuremath{\overline{K}}}
\newcommand{\deltab}{\ensuremath{\bar{\delta}}}
\newcommand{\etab}{\ensuremath{\bar{\eta}}}
\newcommand{\veb}{\ensuremath{\bar{\ve}}}
\newcommand{\Us}{\ensuremath{U^{(*)}}}
\newcommand{\Vs}{\ensuremath{V^{(*)}}}
\newcommand{\Ws}{\ensuremath{W^{(*)}}}
\newcommand{\Zs}{\ensuremath{Z^{(*)}}}
\newcommand{\cof}{\ensuremath{\mathrm{cof}}}
\newcommand{\ran}{\ensuremath{\mathrm{ran}}}
\newcommand{\supp}{\ensuremath{\mathrm{supp}}}
\newcommand{\sz}{\ensuremath{\mathrm{Sz}}} 
\newcommand{\vegtelen}[1]{{#1}^{(\omega)}}
\newcommand{\kcdot}{\!\cdot\!}
\newcommand{\keq}{\!=\!}
\newcommand{\kin}{\!\in\!}
\newcommand{\kge}{\!>\!}
\newcommand{\kgeq}{\!\geq\!}
\newcommand{\kle}{\!<\!}
\newcommand{\kleq}{\!\leq\!}
\newcommand{\kminus}{\!-\!}
\newcommand{\kneq}{\!\neq\!}
\newcommand{\knotin}{\!\notin\!}
\newcommand{\kplus}{\!+\!}
\newcommand{\ksetminus}{\!\setminus\!}
\newcommand{\ksubset}{\!\subset\!}
\newcommand{\ksupset}{\!\supset\!}
\newcommand{\ktimes}{\!\times\!}
\newcommand{\abs}[1]{\lvert #1\rvert}
\newcommand{\norm}[1]{\lVert #1\rVert}
\newcommand{\bignorm}[1]{\big\lVert #1\big\rVert}
\newcommand{\Bignorm}[1]{\Big\lVert #1\Big\rVert}
\newcommand{\biggnorm}[1]{\bigg\lVert #1\bigg\rVert}
\newcommand{\tnorm}[1]{\lvert\mspace{-1mu}\lvert\mspace{-1mu}\lvert
  #1\rvert\mspace{-1mu}\rvert\mspace{-1mu}\rvert}
\newcommand{\Bigtnorm}[1]{\Big\lvert\mspace{-1mu}\Big\lvert\mspace{-1mu}%
  \Big\lvert #1\Big\rvert\mspace{-1mu}\Big\rvert\mspace{-1mu}%
  \Big\rvert}
\newcommand{\coo}{\mathrm{c}_{00}}
\newcommand{\V}{\forall \,}
\newcommand{\E}{\exists \,}
\newcommand{\ve}{\varepsilon}
\newcommand{\ds}{\displaystyle}
\newcommand{\phtm}[1]{\text{\makebox[0pt]{\phantom{$#1$}}}}
\newcommand{\ie}{\textit{i.e.,}\ }
\newtheorem{thm}{Theorem}
\newtheorem{lem}[thm]{Lemma}
\newtheorem{prop}[thm]{Proposition}
\newtheorem{cor}[thm]{Corollary}
\theoremstyle{definition}
\newtheorem*{defn}{Definition}
\theoremstyle{remark}
\newtheorem*{rem}{Remark}
\newcommand{\odd}{\ensuremath{T_\infty^{\text{even}}}}
\author{E. Odell}
\address{Department of Mathematics \\ The University of Texas\\1
  University Station C1200\\ Austin, TX 78712\\ USA}
\email{odell@math.utexas.edu}
\author{Th. Schlumprecht}
\address{Department of Mathematics, Texas A\&M University\\ College
  Station, TX 78712, USA}
\email{thomas.schlumprecht@math.tamu.edu}
\author{A. Zs\'ak}
\address{School of Mathematical Sciences, University of Nottingham,
  University Park, Nottingham NG7 2RD, United Kingdom}
\email{andras.zsak@maths.nottingham.ac.uk}
\thanks{Research of the first two authors was supported by the
  National Science Foundation}
\title[A new infinite game]{A new infinite game in Banach spaces with
  applications}
\date{\today}
\subjclass[2000]{Primary 46B20}
\begin{document}

\maketitle

\begin{center}
  \emph{Dedicated to Nigel J.~Kalton on the occassion of his
    $60^\mathrm{th}$ birthday.}
\end{center}

\section{Introduction}

Let $X$ be a separable infinite-dimensional Banach space, and let
$\cA$ be a set of normalized sequences in $X$. We can consider
a two-player game in $X$ each move of which consists of player S
(subspace chooser) selecting some element $Y$ from the set $\cof(X)$
of finite-codimensional subspaces of $X$, and P (point chooser)
responding by selecting a vector $y$ from the unit sphere $S_Y$ of
$Y$. The game, which we shall refer to as the $\cA$-game, consists of
an infinite sequence of such moves generating a sequence $X_1, x_1,
X_2, x_2,\dots$, where $X_i\kin\cof(X)$ and $x_i\kin S_{X_i}$ for all
$i\kin\bn$. S wins the $\cA$-game if $(x_i)_{i=1}^\infty\kin\cA$.

Of course this game, which has its roots in the game described by
W.~T.~Gowers~\cite{G} and in the notion of asymptotic
structure~\cite{MMT}, has certain limitations. Unlike the theory of
asymptotic structure (where, for each $n\kin\bn$, a game is considered
that consists of $n$ moves, where each move is the same as above),
there is generally no unique smallest class $\cA$ (depending on $X$)
for which S has a winning strategy. However, one can hypothesize
certain specific classes $\cA$ for which S has a winning strategy for
a given $X$ and deduce certain structural consequences. For example,
if for some $K\kge 0$ we let $\cA$ be the class of sequences
$K$-equivalent to the unit vector basis of $\ell_p$ ($1\kle
p\kle\infty$), then any reflexive space $X$ in which S has a winning
strategy for the $\cA$-game in $X$, embeds into an $\ell_p$-sum of
finite-dimensional spaces~\cite{OS1}. In fact, it was the problem of
classifying subspaces of $\ell_p$-sums of finite-dimensional spaces
that motivated the study of this game.

The general theme here is to take a coordinate-free property of a
space $X$, recast it in terms of S having a winning strategy in the
$\cA$-game for a suitable class $\cA$, and then to show that $X$
embeds into a space with an FDD (finite-dimensional decomposition)
which has the ``coordinatized'' version of the property we started
with. In addition to the $\ell_p$ result in~\cite{OS1} cited above this
general theme was followed in~\cite{OS2} and~\cite{OSZ1}. In~\cite{OS2}
reflexive spaces $X$ were studied for which S has a winning strategy
for both games corresponding to the classes $\cA_p$ of normalized
basic sequences with an $\ell_p$-lower estimate and $\cA^q$ of
normalized basic sequences with an $\ell_q$-upper estimate ($1\kle
q\kleq p\kle\infty$). The end result was that $X$ embeds into a
reflexive space with an FDD such that every block sequence satisfies
$\ell_p$-lower and $\ell_q$-upper estimates. A consequence of this is
that one can construct a separable, reflexive space universal for the
class of separable, uniformly convex spaces or, more generally, for
the class $C_\omega\keq \{X:\,X\text{ is separable, reflexive },
\sz(X)\kleq\omega,\ \sz(X^*)\kleq\omega\}$, where $\sz(Y)$ denotes the
Szlenk index of a separable Banach space $Y$. Recently an alternative
proof of the universal result was given~\cite{DF} using powerful
set-theoretical notions (although, the FDD structural results cannot
be obtained in this way). We should also note that a set-theoretical
study of $\cA$-games was given by C.~Rosendal~\cite{R}.

The motivation behind this paper arose from a problem posed to us by
A. Pe\l czy\'nski. Given $\alpha\kle\omega_1$, does there exist a
separable, reflexive space universal for the class $C_\alpha$ (defined
as above with $\omega$ replaced by $\alpha$). Thus far the authors
of~\cite{DF} have been unable to extend their techniques to this
problem. In researching Pe\l czy\'nski's problem we discovered that it
was necessary to consider a new game and solve the corresponding
embedding problem in this context.

The game is played as follows. In each move of the game S (subspace
chooser) selects $k\kin\bn$ and $Y\kin\cof(X)$, and then P (point
chooser) responds by choosing $y\kin S_Y$. The game then consists of
an infinite sequence of moves generating a sequence
$k_1,X_1,x_1,k_2,X_2,x_2,\dots$, where $k_i\kin\bn,\ X_i\kin\cof(X)$
and $x_i\kin S_{X_i}$ for all $i\kin\bn$. Given a normalized,
$1$-unconditional sequence $(v_i)$, S is declared winner of this game
if $(v_{k_i})$ is dominated by $(x_i)$. We also
consider, for given normalized $1$-unconditional sequence $(u_i)$, the
version where S wins the game if $(x_i)$ is dominated
by $(u_{k_i})$. In the case where $(v_i)$ and $(u_i)$ are the unit
vector bases of $\ell_p$ and $\ell_q$, respectively, this conforms to
the games considered in~\cite{OS2}, but Pe\l czy\'nski's problem
requires us to consider sequences $(v_i)$ and $(u_i)$ that are not
subsymmetric. (In~\cite{OSZ1} in order to solve the problem of
embedding an asymptotic $\ell_p$ space into one with an asymptotic
$\ell_p$ FDD it was necessary to extend the results of~\cite{OS2}
concerning $\ell_p$-lower and $\ell_q$-upper estimates to more general
$(v_i)$-lower and $(u_i)$-upper estimates, but the game played did not
change.)

The main results of this paper are given in
Section~\ref{section:embedding}, Theorems~\ref{thm:embedding}
and~\ref{thm:lower-and-upper-embedding}, and
Corollary~\ref{cor:subspace-quotient}. In brief these theorems say the
following. Suppose we are given normalized, $1$-unconditional bases
$(v_i)$ and $(u_i)$ with certain properties, and a reflexive space
$X$. Assume that S wins the subsequential $(v_i)$-lower and the
subsequential $(u_i)$-upper games described above. Then $X$ embeds
into a space $Z$ with an FDD  such that every block sequence satisfies
subsequential $(v_i)$-lower and $(u_i)$-upper estimates. (Precise
definitions of these estimates will be given below.)

One application of these theorems is a new proof of the results
of~\cite{OS2}. The application to the Pe\l czy\'nski problem will appear
in~\cite{OSZ2}, where further machinery is necessary to exploit the
results obtained here.

In Section~\ref{section:universal} we derive some universal space
consequences of our embedding theorems. Section~\ref{section:prelim}
introduces our terminology, in particular we give precise definitions
of various lower and upper norm
estimates. Section~\ref{section:prelim} also contains some
straightforward duality results concerning such norm estimates, and a
combinatorial result (Proposition~\ref{prop:tree-est-sb-est}) that is
key to embedding spaces satisfying the coordinate-free version of a
certain property into a space with an FDD satisfying the
``coordinatized'' version of the same property.

In Section~\ref{section:zv} we define the space $Z^V(E)$, where $Z$ is
a Banach space with an FDD $E\keq(E_i)$ and $V$ is the closed linear
span of a normalized, $1$-unconditional sequence $(v_i)$. We
develop the properties of $Z^V(E)$, in particular proving that, under
appropriate hypotheses,  $Z^V(E)$ is a reflexive space admitting
subsequential $V$-lower estimates.

\section{Definitions and preliminary results}
\label{section:prelim}

We begin with fixing some terminology. Let $Z$ be a Banach space with
an FDD $E\keq(E_n)$.  For $n\kin\bn$ we denote by $P^E_n$ the $n$-th
\emph{coordinate projection}, \ie $P^E_n\colon Z\to E_n$ is the map
defined by $\sum_i z_i\mapsto z_n$, where $z_i\kin E_i$ for all
$i\kin\bn$. For a finite set $A\ksubset\bn$ we put
$P^E_A\keq\sum_{n\in A} P^E_n$. The \emph{projection constant $K(E,Z)$
  of $(E_n)$ (in $Z$) }is defined by
\[
K=K(E,Z)=\sup_{m<n}\norm{P^E_{[m,n]}}\ ,
\]
where $[m,n]$ denotes the interval $\{m,m\kplus 1,\dots, n\}$ in
$\bn$. Recall that $K$ is always finite and, as in the case of bases,
we say that \emph{$(E_n)$ is bimonotone (in $Z$) }if $K\keq 1$. By
passing to the equivalent norm
\[
\tnorm{\cdot}\colon Z\to\br\ ,\qquad z\mapsto\sup_{m<n}
\norm{P^E_{[m,n]}(z)}\ ,
\]
we can always renorm $Z$ so that $K\keq 1$.

For a sequence $(E_i)$ of finite-dimensional spaces
we define the vector space
\[
\coo(\oplus_{i=1}^\infty E_i)=\big\{(z_i):\,z_i\kin E_i \text{ for all
  $i\kin\bn$, and $\{i\kin\bn:\,z_i\kneq 0\}$ is finite}\big\}\ ,
\]
which is dense in each Banach space for which $(E_i)$ is an FDD. For a
set $A\ksubset\bn$ we denote  by $\bigoplus_{i\in A} E_i$ the linear
subspace of $\coo(\oplus E_i)$ generated by the elements of
$\bigcup_{i\in A}E_i$. As usual we denote the vector space of
sequences in $\br$ which are eventually zero by $\coo$. We sometimes
will consider for the same
sequence $(E_i)$ of finite-dimensional spaces different norms on
$\coo(\oplus E_i)$. In order to avoid confusion we will therefore
often index the norm by the Banach space whose norm we are using, \ie
$\norm{\cdot}_Z$ denotes the norm of the Banach space $Z$.

If $Z$ has an FDD $(E_i)$, the vector space
$\coo(\oplus_{i=1}^\infty E^*_i)$, where $E^*_i$ is the dual space of
$E_i$ for each $i\kin\bn$, can be identified in a natural way with a
$w^*$-dense subspace of $Z^*$. Note however
that the embedding $E^*_i\hookrightarrow Z^*$ is, in general, not
isometric unless $K\keq 1$. We will always consider $E^*_i$ with the
norm it inherits from $Z^*$ instead of the norm it has as the dual
space of $E_i$. We denote the norm closure of
$\coo(\oplus_{i=1}^\infty E^*_i)$ in $Z^*$ by $\Zs$. Note that $\Zs$
is $w^*$-dense in $Z^*$, the unit ball $B_{\Zs}$ norms $Z$, and
$(E_i^*)$ is an FDD of $\Zs$ having a projection constant not
exceeding $K(E,Z)$. If $K(E,Z)\keq1$, then $B_{\Zs}$ is $1$-norming
for $Z$ and $Z^{(*)(*)}\keq Z$.

For $z\kin \coo(\oplus E_i)$ we define \emph{the support $\supp_E(z)$
  of $z$ with respect to $(E_i)$ }by
\[
\supp_E(z)=\{i\kin\bn:\,P^E_i(z)\kneq 0\}\ ,
\]
and we define the \emph{range $\ran_E(z)$ of $Z$ with respect to
$(E_i)$ }to be the smallest interval in $\bn$ containing
$\supp_E(z)$. A sequence $(z_i)$ (finite or infinite) of non-zero
vectors in $\coo(\oplus E_i)$ is called \emph{a block sequence of
  $(E_i)$ }if
\[
\max\supp_E(z_n)<  \min\supp_E(z_{n+1})\qquad\text{whenever $n\kin\bn$
  (or $n\kle\,$length$(z_i)$)}\ .
\]
A block sequence $(z_i)$ of $(E_i)$ is called \emph{normalized (in
  $Z$) }if $\norm{z_i}_Z\keq 1$ for all $i\kin\bn$.

Let $\deltab\keq(\delta_i)\ksubset (0,1)$ with $\delta_i\downarrow
0$. A (finite or infinite) sequence $(z_i)$ in $S_Z$ is called
\emph{a $\deltab$-skipped block sequence of $(E_i)$ }if there exists a
sequence $1\kleq k_0\kle k_1\kle k_2\kle\dots$ in $\bn$ such that
\[
\norm{z_n-P^E_{(k_{n-1},k_n)}(z_n)}<\delta_n\qquad\text{for all
  $n\kin\bn$ (or $n\kleq\,$length$(z_i)$)}\ .
\]
\begin{rem}
  A sequence $(F_i)$ of finite-dimensional spaces is  called \emph{a
    blocking of $(E_i)$ }if for some sequence $m_1\kle m_2\kle\dots$
  in $\bn$ we have $F_n\keq\bigoplus_{j=m_{n-1}+1}^{m_n}E_j$ for all
  $n\kin\bn$ ($m_0\keq 0$).  
  If $(F_i)$ is a blocking of $(E_i)$, and if $(x_i)$ is a
  $\deltab$-skipped block sequence of $(F_i)$, then $(x_i)$ is not
  necessarily a $\deltab$-skipped block sequence of $(E_i)$ (since  in
  the definition of skipped block sequence we skip exactly one
  coordinate). Nevertheless it is clear that $(x_i)$ is a
  $2K\deltab$-skipped block sequence of $(E_i)$, where $K$ is the
  projection constant of $(E_i)$ in $Z$.
\end{rem}

\begin{defn}
  Given two sequences $(e_i)$ and $(f_i)$ in some Banach spaces, and
  given a constant $C\kge 0$, we say that \emph{$(f_i)$ $C$-dominates
    $(e_i)$, }or that \emph{$(e_i)$ is $C$-dominated by $(f_i)$}, if
  \[
  \Bignorm{\sum a_ie_i}\leq C  \Bignorm{\sum a_if_i}\qquad\text{for
    all $(a_i)\kin\coo$}\ .
  \]  
  We say that \emph{$(f_i)$ dominates $(e_i)$, }or that \emph{$(e_i)$
    is dominated by $(f_i)$, }if there exists a constant $C\kge 0$ such
  that $(f_i)$ $C$-dominates $(e_i)$.
\end{defn}

We shall now introduce certain lower and upper norm estimates for
FDD's.

\begin{defn}
  Let $Z$ be a Banach space with an FDD $(E_n)$, let $V$ be a Banach
  space with a normalized, $1$-unconditional basis $(v_i)$ and let
  $1\kleq C\kle\infty$.
  
  We say that $(E_n)$ \emph{satisfies subsequential $C$-$V$-lower
  estimates (in $Z$) }if every normalized block sequence $(z_i)$ of
  $(E_n)$ in $Z$ $C$-dominates $(v_{m_i})$, where
  $m_i\keq\min\supp_E(z_i)$ for all $i\kin\bn$, and $(E_n)$
  \emph{satisfies subsequential $C$-$V$-upper estimates (in $Z$) }if
  every normalized block sequence $(z_i)$ of $(E_n)$ in $Z$ is
  $C$-dominated by $(v_{m_i})$, where $m_i\keq\min\supp_E(z_i)$ for
  all $i\kin\bn$.

  If $U$ is another space with a normalized and 1-unconditional basis
  $(u_i)$, we say that $(E_n)$ \emph{satisfies subsequential
  $C$-$(V,U)$ estimates (in $Z$) }if it satisfies subsequential
  $C$-$V$-lower and $C$-$U$-upper estimates in~$Z$.

  We say that $(E_n)$ satisfies \emph{subsequential $V$-lower,
  $U$-upper }or \emph{$(V,U)$ estimates (in $Z$) }if for some $C\kgeq
  1$ it satisfies subsequential $C$-$V$-lower, $C$-$U$-upper or
  $C$-$(V,U)$ estimates in $Z$, respectively.
\end{defn}

\begin{rem}
  Assume that $(E_n)$ satisfies subsequential $C$-$V$-lower estimates
  in $Z$ and that $(z_i)$ is a  normalized block sequence of
  $(E_n)$. If $\max\supp_E(z_{i-1})\kle m_i\kleq \min\supp_E(z_i)$ for
  all $i\kin\bn$ (where $\max\supp_E(z_0)\keq 0$), then $(z_i)$
  $C$-dominates $(v_{m_i})$.

  Another easy fact is that if every normalized block sequence $(z_i)$
  of $(E_n)$ in $Z$ dominates $(v_{m_i})$, where
  $m_i\keq\min\supp_E(z_i)$ for all $i\kin\bn$, then $(E_n)$ satisfies
  subsequential $V$-lower estimates in $Z$.

  Analogous statements hold for upper estimates.
\end{rem}

We shall need a coordinate-free version of subsequential lower and
upper estimates. One way of defining this is reminiscent of the notion
of asymptotic structure. Let $V$ be a Banach space with a normalized
and $1$-unconditional basis $(v_i)$, and let $C\kin[1,\infty)$. Assume
that we are given an infinite-dimensional Banach space $X$. We say
that \emph{$X$ satisfies subsequential $C$-$V$-lower estimates
}(respectively, \emph{subsequential $C$-$V$-upper estimates}) if
\[
\begin{array}{ccc}
  \E k_1\kin\bn & \E X_1\kin\text{cof}(X) & \V x_1\kin S_{X_1} \\
  \E k_2\kin\bn & \E X_2\kin\text{cof}(X) & \V x_2\kin S_{X_2} \\
  \E k_3\kin\bn & \E X_3\kin\text{cof}(X) & \V x_3\kin S_{X_3} \\
  \vdots & \vdots & \vdots
\end{array}
\]
such that $k_1\kle k_2\kle\dots$ and
$(v_{k_i})$ is $C$-dominated by (respectively, $C$-dominates)
$(x_i)$. If $U$ is another Banach space with a normalized,
$1$-unconditional basis $(u_i)$, then we say that \emph{$X$ satisfies
  subsequential $C$-$(V,U)$ estimates }if it satisfies subsequential
$C$-$V$-lower and $C$-$U$-upper estimates. Finally, we say that
\emph{the Banach space $X$ satisfies subsequential $V$-lower,
  $U$-upper or $(V,U)$ estimates }if for some constant $C$ it
satisfies subsequential $C$-$V$-lower, $C$-$U$-upper or $C$-$(V,U)$
estimates, respectively.

The above definitions are given more formally in the language of
games. Let us recall from the Introduction that in our games each move
consists of S (subspace chooser) selecting $k\kin\bn$ and
$Y\kin\cof(X)$, and then P (point chooser) responding by choosing
$y\kin S_Y$. The game then consists of an infinite sequence of moves
generating a sequence $k_1,X_1,x_1,k_2,X_2,x_2,\dots$, where
$k_i\kin\bn,\ X_i\kin\cof(X)$ and $x_i\kin S_{X_i}$ for all
$i\kin\bn$. Player S wins the game if $(v_{k_i})$ is $C$-dominated by
$(x_i)$, otherwise P is declared the winner. The space $X$ then
satisfies subsequential $C$-$V$-lower estimates if and only if S has a
winning strategy, \ie there is a function $\phi$ such that given
sequences $(k_i)$ in $\bn$, $(X_i)$ in cof$(X)$, and $(x_i)$ in $X$
such that $x_i\kin S_{X_i}$ \emph{and} $(k_i,X_i)\keq
\phi(x_1,x_2,\dots,x_{i-1})$ for all $i\kin\bn$, then $k_1\kle
k_2\kle\dots$ and $(v_{k_i})$ is $C$-dominated by $(x_i)$. The notions
of subsequential $C$-$V$-upper estimates, subsequential $C$-$(V,U)$
estimates, etc.~can be formalized in a similar way.

Yet another way of expressing subsequential lower and upper estimates
in a coordinate-free way uses infinite, countably branching trees
(see Proposition~\ref{prop:game-tree-def} below). This is not
surprising since a winning strategy in the game described above
corresponds naturally to such a tree. We define  for $\ell\kin\bn$
\begin{align*}
  T_\ell&=\big\{(n_1,n_2,\dots,n_\ell):\,n_1\kle n_2\kle\dots\kle
  n_\ell \text{ are in }\bn\big\}
  \intertext{and}
  T_\infty&=\bigcup_{\ell=1}^\infty T_\ell\ ,\qquad
  \odd =\bigcup_{\ell=1}^\infty T_{2\ell}\ .
\end{align*}
If $\alpha\keq(m_1,\dots,m_\ell)\kin T_\ell$, we call $\ell$ the
\emph{length of $\alpha$} and denote it by $\abs{\alpha}$, and
$\beta\keq(n_1,\dots,n_k)\kin T_\infty$ is called an \emph{extension
  of $\alpha$}, or $\alpha$ is called \emph{a restriction of }$\beta$,
if $\ell\kle k$ and $m_i\keq n_i$ for $i\keq 1,\dots,\ell$. We then
write $\alpha\kle \beta$ and with this order both $T_\infty$ and
$\odd$ are trees.

An \emph{even tree }in a Banach space $X$ is a family
$(x_\alpha)_{\alpha\in\odd}$ in $X$. Sequences of
the form $\big(x_{(\alpha,n)}\big)_{n>n_{2\ell-1}}$, where
$\ell\kin\bn$ and $\alpha\keq (n_1, n_2,\dots, n_{2\ell-1})\kin
T_\infty$, are called \emph{nodes }of the tree. For a sequence
$n_1\kle n_2\kle\dots$ of positive integers the sequence
$\big(x_{(n_1,n_2,\dots,n_{2\ell})}\big)_{\ell=1}^\infty$ is called a
\emph{branch }of the tree.

If $(x_\alpha)_{\alpha\in\odd}$ is an even tree in a Banach space $X$
and if $T'\ksubset\odd$  is closed under taking restrictions so that
for each $\alpha\kin T'\cup\{\emptyset\}$ and for each $m\kin\bn$ the
set $\{n\kin\bn:\,(\alpha,m,n)\kin T'\}$ is either empty or has
infinite size, and moreover the latter occurs for infinitely many
values of $m$, then we call $(x_\alpha)_{\alpha\in T'}$ a \emph{full
  subtree of }$(x_\alpha)_{\alpha\in\odd}$. Note that
$(x_\alpha)_{\alpha\in T'}$ could then be relabeled to a family
indexed by $\odd$, and note that the branches of
$(x_\alpha)_{\alpha\in T'}$ are branches of
$(x_\alpha)_{\alpha\in\odd}$ and that the nodes of
$(x_\alpha)_{\alpha\in T'}$ are subsequences of certain nodes of
$(x_\alpha)_{\alpha\in\odd}$.

An even tree $(x_\alpha)_{\alpha\in\odd}$ in a Banach space $X$ is
called \emph{normalized }if $\norm{x_{\alpha}}\keq 1$ for all
$\alpha\kin\odd$, and is called \emph{weakly null }if every node is a
weakly null sequence. If $X$ has an FDD $(E_n)$, then
$(x_\alpha)_{\alpha\in\odd}$ is called a \emph{block even tree of
  $(E_n)$ }if every node is a block sequence of $(E_n)$.

\begin{defn}
  \label{def:tree-est}
  Let $V$ be a Banach space with a normalized and $1$-unconditional
  basis $(v_i)$, and let $C\kin[1,\infty)$. Assume that we are given
  an infinite-dimensional Banach space $X$. We say that \emph{$X$
  satisfies subsequential $C$-$V$-lower tree estimates }if every
  normalized, weakly null even tree $(x_\alpha)_{\alpha\in\odd}$ in
  $X$ has a branch $\big(x_{(n_1,n_2,\dots,n_{2i})}\big)$ which
  $C$-dominates $(v_{n_{2i-1}})$.

  We say that $X$ \emph{satisfies subsequential $C$-$V$-upper tree
  estimates }if every normalized, weakly null even tree
  $(x_\alpha)_{\alpha\in\odd}$ in $X$ has a branch
  $\big(x_{(n_1,n_2,\dots,n_{2i})}\big)$ which is $C$-dominated by
  $(v_{n_{2i-1}})$.

  If $U$ is a second space with a $1$-unconditional and normalized
  basis $(u_i)$, we say that $X$ \emph{satisfies subsequential
  $C$-$(V,U)$ tree estimates }if it satisfies subsequential
  $C$-$V$-lower and $C$-$U$-upper tree estimates.

  We say that $X$ \emph{satisfies subsequential $V$-lower, $U$-upper
  }or \emph{$(V,U)$ tree estimates }if for some $1\kleq C\kle\infty$
  $X$ satisfies subsequential $C$-$V$-lower, $C$-$U$-upper or
  $C$-$(V,U)$ tree estimates, respectively.
\end{defn}

\begin{rem}
  As in the FDD case, we do not need to fix a constant $C$ in the
  above definitions: if every normalized, weakly null even tree
  $(x_\alpha)_{\alpha\in\odd}$ in $X$ has a branch
  $\big(x_{(n_1,n_2,\dots,n_{2i})}\big)$ which dominates
  $(v_{n_{2i-1}})$, then there exists a constant $C\kgeq 1$ such that
  $X$ satisfies subsequential $C$-$V$-lower tree estimates. The
  analogous statement for upper estimates also
  holds. (See~\cite[Proposition~1.2]{OSZ1}.)
\end{rem}

Proposition~\ref{prop:game-tree-def} below shows that, under some mild
hypotheses, the two coordinate-free versions of lower and upper
estimates given above are essentially the same. Before stating this
result we need a certain property of basic sequences defined
in~\cite{BHO}.

\begin{defn}
  Let $V$ be a Banach space with a normalized, $1$-unconditional basis
  $(v_i)$ and let $1\kleq C\kle\infty$.

  We say that $(v_i)$ is \emph{$C$-right-dominant }(respectively,
  \emph{$C$-left-dominant}) if for all sequences $m_1\kle m_2\kle
  \dots$ and $n_1\kle n_2\kle\dots$ of positive integers with
  $m_i\kleq n_i$ for all $i\kin\bn$ we have that $(v_{m_i})$ is
  $C$-dominated by (respectively, $C$-dominates) $(v_{n_i})$. We say
  that $(v_i)$ is \emph{right-dominant }or \emph{left-dominant }if for
  some $C\kgeq 1$ it is $C$-right-dominant or $C$-left-dominant,
  respectively.
\end{defn}

\begin{rem}
  For $(v_i)$ to be right-dominant (respectively, left-dominant) it is
  enough to have the property that $(v_{m_i})$ is dominated by
  (respectively, dominates) $(v_{n_i})$ for all sequences $m_1\kle
  m_2\kle \dots$ and $n_1\kle n_2\kle\dots$ of positive integers with
  $m_i\kleq n_i$ for all $i\kin\bn$. Also, $(v_i)$ is
  $C$-right-dominant (respectively, $C$-left-dominant) if and only if
  the sequence $(v^*_i)$ of biorthogonal functionals in $\Vs$ is
  $C$-left-dominant (respectively, $C$-right-dominant).
\end{rem}

\begin{prop}
  \label{prop:game-tree-def}
  Let $V$ be a Banach space with a normalized and $1$-unconditional
  basis $(v_i)$ and let $C,D\kin [1,\infty)$. Let $X$ be an
  infinite-dimensional Banach space.

  \begin{mylist}{(a)}
  \item[(a)]
    Assume that $(v_i)$ is $D$-left-dominant. If $X$ satisfies
    subsequential $C$-$V$-lower estimates, then for all $\ve\kge 0$
    $X$ satisfies subsequential $(CD\kplus\ve)$-$V$-lower tree
    estimates.
  \item[(b)]
    Assume that $X^*$ is separable. If $X$ satisfies subsequential
    $C$-$V$-lower tree estimates, then it also satisfies subsequential
    $C$-$V$-lower estimates.
  \end{mylist}
\end{prop}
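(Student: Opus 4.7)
For part (a), the plan is to use a winning strategy $\phi$ for the $C$-$V$-lower game to navigate through a given normalized, weakly null even tree $(x_\alpha)_{\alpha\in\odd}$ in $X$, producing a branch satisfying the required estimate. Fix tolerances $\delta_i\kge 0$ with $\sum_i\delta_i$ small enough that $CD/(1\kminus CD\sum_i\delta_i)\kleq CD\kplus\ve$, and recursively build integers $n_1\kleq n_2\kleq\dots$ and auxiliary vectors $(y_i)$ in $X$. At stage $i$, given $(n_1,\dots,n_{2i-2})$ and $(y_1,\dots,y_{i-1})$, set $(k_i,X_i)\keq\phi(y_1,\dots,y_{i-1})$ and $n_{2i-1}\keq\max\{n_{2i-2},k_i\}$. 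Because $X_i$ is the intersection of finitely many kernels of functionals in $X^*$ and the node $(x_{(n_1,\dots,n_{2i-1},m)})_{m>n_{2i-1}}$ is weakly null, one can choose $n_{2i}$ large enough that $\mathrm{dist}(x_{(n_1,\dots,n_{2i})},X_i)\kle\delta_i$; a standard perturbation then supplies $y_i\kin S_{X_i}$ with $\norm{y_i\kminus x_{(n_1,\dots,n_{2i})}}\kle 2\delta_i$.

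By the winning property of $\phi$, $(v_{k_i})$ is $C$-dominated by $(y_i)$. The condition $k_i\kleq n_{2i-1}$ together with $D$-left-dominance gives $\norm{\sum_i a_i v_{n_{2i-1}}}\kleq D\norm{\sum_i a_i v_{k_i}}$, so $(v_{n_{2i-1}})$ is $CD$-dominated by $(y_i)$. The standard small-perturbation estimate
\[
\Bignorm{\sum_i a_i y_i}\leq\Bignorm{\sum_i a_i x_{(n_1,\dots,n_{2i})}}+\Big(\max_i\abs{a_i}\Big)\sum_i\delta_i,
\]
combined with the bound $\max_i\abs{a_i}\kleq\norm{\sum_i a_i v_{n_{2i-1}}}$ coming from the 1-unconditionality of $(v_i)$, then transfers the estimate to the branch $(x_{(n_1,\dots,n_{2i})})$ after absorbing the error into the final constant.

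For part (b), the plan is by contradiction. The winning condition $\V(a_i)\kin\coo\colon\norm{\sum_i a_i v_{k_i}}\kleq C\norm{\sum_i a_i y_i}$ is the intersection over $(a_i)\kin\coo$ of conditions each depending on only a finite initial segment of the play and continuous there, hence is closed in the natural product topology. By the Gale--Stewart theorem the game is determined, so if S has no winning strategy then P has one, call it $\psi$. Using separability of $X^*$ fix a dense sequence $(f_n)\ksubset S_{X^*}$, and for $m\kin\bn$ set $Y_m\keq\bigcap_{\ell\leq m}\ker f_\ell\kin\cof(X)$. I build a normalized, weakly null even tree $(x_\alpha)_{\alpha\in\odd}$ that contradicts the tree estimate hypothesis by defining, for $\alpha\keq(n_1,\dots,n_{2i})\kin\odd$,
\[
x_\alpha\keq\psi(k_1,X_1,y_1,\dots,k_i,X_i),
\]
where $k_j\keq n_{2j-1}$, $X_j\keq Y_{n_{2j}}$, and $y_j\keq x_{(n_1,\dots,n_{2j})}$ for $j\kle i$. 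Well-definedness and normalization are immediate from this recursion. For weak nullity of the node $(x_{(n_1,\dots,n_{2i-1},m)})_m$, each term lies in $Y_m$, so $f_\ell$ annihilates it whenever $\ell\kleq m$, and this forces weak convergence to zero by the density of $(f_\ell)$ in $S_{X^*}$. Every branch is by construction a play of the game in which P follows $\psi$, so no branch $C$-dominates $(v_{n_{2i-1}})$, contradicting the tree estimate.

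The main obstacle is in part (b), where one must simultaneously organize P's winning responses into a branching tree \emph{and} ensure each node is weakly null. The key trick is to let the cofinite-codimensional subspaces $X_j$ depend on the \emph{even}-indexed tree coordinates $n_{2j}$ rather than on the odd ones, so that varying $n_{2j}$ produces a genuinely weakly null sequence of $\psi$-responses rather than a constant one; the separability of $X^*$ enters precisely to parameterize a cofinal family in $\cof(X)$ by $\bn$ via the dense sequence $(f_n)$.
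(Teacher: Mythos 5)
Your proposal is correct and takes essentially the same route as the paper. In part (a) you run the weak-null-perturbation-plus-left-dominance argument forwards from S's winning strategy, whereas the paper runs the contrapositive (building a winning strategy for P from a bad tree), but the estimates are identical; part (b) is the paper's determinacy argument, with your subspaces $Y_m=\bigcap_{\ell\le m}\ker f_\ell$ simply making explicit the sequence of finite-codimensional subspaces that the paper asserts to exist from separability of $X^*$.
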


\begin{rem}
  Analogous results hold for upper estimates. For that in~(a) we need
  to assume that $(v_i)$ is $D$-right-dominant.
\end{rem}

\begin{proof}
  (a) Assume that for some $\ve\kge 0$ there is a normalized, weakly
  null even tree $(x_\alpha)_{\alpha\in\odd}$ in $X$ such that for any
  sequence $n_1\kle n_2\kle\dots$ of positive integers the branch
  $\big(x_{(n_1,n_2,\dots,n_{2i})}\big)$ does not
  $(CD\kplus\ve)$-dominate $(v_{n_{2i-1}})$. We show that in this case
  $X$ does not satisfy subsequential $C$-$V$-lower estimates by
  exhibiting a winning strategy for the point chooser P. Fix a
  sequence $(\delta_i)\ksubset (0,1)$ with $\Delta\keq\sum_i\delta_i$
  satisfying
  \[
  C<\frac{CD+\ve}D\cdot\left( 1+\Delta\frac{CD+\ve}D\right)^{-1}\ .
  \]
  Suppose the game starts with S picking $k_1\kin\bn$ and
  $X_1\kin\text{cof}(X)$. Since the nodes of
  $(x_\alpha)_{\alpha\in\odd}$ are weakly null, there exist
  $n_1,n_2\kin\bn$ such that $k_1\kleq n_1\kle n_2$ and
  $d\big(x_{(n_1,n_2)},X_1\big)\kle\delta_1$. P's response will be a
  $y_1\kin S_{X_1}$ with $\bignorm{x_{(n_1,n_2)}\kminus
  y_1}\kle\delta_1$. In the second move $S$ picks $k_2\kin\bn$ and
  $X_2\kin\text{cof}(X)$. Then there exist $n_3,n_4\kin\bn$ such that
  $n_2\kle n_3\kle n_4,\ k_2\kleq n_3$ and
  $d\big(x_{(n_1,n_2,n_3,n_4)},X_2\big)\kle\delta_2$. P's response
  will be some $y_2\kin S_{X_2}$ with
  $\bignorm{x_{(n_1,n_2,n_3,n_4)}\kminus y_2}\kle\delta_2$. In
  general, on the $j^{\text{th}}$ move of the game ($j\kgeq 2$), S
  picks $k_j\kin\bn,\ X_j\kin\text{cof}(X)$. Then one can find
  $n_{2j-1}, n_{2j}\kin\bn$ such that $n_{2j-2}\kle n_{2j-1}\kle
  n_{2j},\ k_j\kleq n_{2j-1}$ and
  $d\big(x_{(n_1,n_2,\dots,n_{2j})},X_j\big)\kle\delta_j$. P's
  $j^{\text{th}}$ move will be some $y_j\kin S_{X_j}$ such that
  $\bignorm{x_{(n_1,n_2,\dots,n_{2j})}\kminus y_j}\kle\delta_j$.

  Since the branch $\big(x_{(n_1,n_2,\dots,n_{2i})}\big)$ does not
  $(CD\kplus\ve)$-dominate $(v_{n_{2i-1}})$, there exists
  $(a_i)\kin\coo$ such that
  \[
  \Bignorm{\sum _ia_iv_{n_{2i-1}}} > (CD\kplus\ve) \Bignorm{\sum _ia_i
    x_{(n_1,n_2,\dots,n_{2i})}}\ .
  \]
  We may assume without loss of generality that $\bignorm{\sum a_i
  v_{k_i}}\keq 1$. Using the $D$-left-dominant property of $(v_i)$ and
  that $k_i\kleq n_{2i-1}$ for all $i\kin\bn$, together with the
  choice of $\Delta$, an easy computation now gives
  \[
  \Bignorm{\sum _ia_iv_{k_i}} > C \Bignorm{\sum _ia_i y_i}\ .
  \]
  Thus P wins the game.\\

  \noindent
  (b) Assume that $X$ does not satisfy subsequential $C$-$V$-lower
  estimates. This means that S does not have a winning strategy, which
  in turn implies that there is a winning strategy $\phi$ for the point
  chooser (this follows from the fact that closed games~\cite{GS} or,
  more generally, Borel games~\cite{M} are determined). Thus given
  sequences $(k_i)$ in $\bn$, $(X_i)$ in cof$(X)$,
  and $(x_i)$ in $X$ such that $k_1\kle k_2\kle\dots$ \emph{and}
  $x_n\keq \phi(k_1,X_1,k_2,X_2,\dots,k_n,X_n)$ for all $n\kin\bn$,
  then $x_i\kin S_{X_i}$ for all $i\kin\bn$ and $(v_{k_i})$ is not
  $C$-dominated by $(x_i)$. Fix a sequence $(X_i)$ of
  finite-codimensional subspaces of $X$ such that every bounded
  sequence $(x_i)$ with $x_i\kin X_i$ for all $i\kin\bn$ is weakly
  null. This exists by the assumption that $X$ has separable dual.

  We now construct a normalized, weakly null even tree in $X$ by
  recursion to show that $X$ does not satisfy subsequential
  $C$-$V$-lower tree estimates. For $\ell\kin\bn$ and
  $\alpha\keq(n_1,n_2,\dots,n_{2\ell})\kin T_{2\ell}$ we set
  $x_\alpha\keq\phi(n_1,X_{n_2},n_3,X_{n_4},%
  \dots,n_{2\ell-1},X_{n_{2\ell}})$. It is easy to verify that
  $(x_\alpha)_{\alpha\in \odd}$ is a normalized, weakly null even tree
  in $X$, and that for any sequence $n_1\kle n_2\kle\dots$ of positive
  integers the branch $\big(x_{(n_1,n_2,\dots,n_{2i})}\big)$ does not
  $C$-dominate $(v_{n_{2i-1}})$.
\end{proof}

If $V$ is a Banach space with a normalized, $1$-unconditional basis,
and if $N$ is an infinite subset of $\bn$, we write $V_N$ for the
closed linear span of $\{ v_i:\,i\kin N\}$. When we talk about
subsequential $V_N$-lower estimates, etc., it will be with respect to
the normalized, $1$-unconditional basis $(v_i)_{i\in N}$ of $V_N$.
We shall also write $\vegtelen{\bn}$ for the set of all infinite
subsets of $\bn$.

Note that if $V$ is a Banach space with a normalized,
$1$-unconditional, left-dominant basis $(v_i)$, then for any space $Z$
with an FDD $(E_i)$ satisfying subsequential $V$-lower estimates in
$Z$, the FDD $(E_i)$ will also satisfy subsequential $V_N$-lower
estimates for any $N\kin\vegtelen{\bn}$. Later on we shall need a
result that allows us to pass from subsequential $V_N$-lower estimates
for some $N\kin\vegtelen{\bn}$ to subsequential $V$-lower
estimates. Before stating this result we need a definition.

\begin{defn}
  Let $V$ be a Banach space with a normalized, $1$-unconditional basis
  $(v_i)$ and let $1\kleq C\kle\infty$.

  We say that $(v_i)$ is \emph{$C$-block-stable }if any two normalized
  block bases $(x_i)$ and $(y_i)$ with
  \[
  \max \big( \supp (x_i)\cup\supp(y_i)\big) < \min \big( \supp
  (x_{i+1})\cup\supp(y_{i+1})\big)\qquad\text{for all }i\kin\bn
  \]
  are $C$-equivalent. We say that $(v_i)$ is \emph{block-stable }if it
  is $C$-block-stable for some constant~$C$.
\end{defn}

\begin{rem}
  It is routine to check that $(v_i)$ is $C$-block-stable if and only
  if the sequence $(v^*_i)$ of biorthogonal functionals in $\Vs$ is
  $C$-block-stable.

  A block-stable basis is a special case of a
  \emph{block-norm-determined FDD }introduced by H.~P.~Rosenthal,
  who has initiated an exhaustive study of such FDDs~\cite{Ro}.
\end{rem}

\begin{lem}
  \label{lem:norm-est-for-subseq}
  Let $V$ and $U$ be Banach spaces with normalized, $1$-unconditional,
  block-stable bases $(v_i)$ and $(u_i)$, respectively, and assume
  that $(v_i)$ is dominated by $(u_i)$. Let $M\kin\vegtelen{\bn}$ and
  let $Z$ be a Banach space with an FDD $E\keq (E_i)$ satisfying
  subsequential $(V_M,U_M)$ estimates in $Z$. Then $W\keq
  Z\oplus_{\ell_\infty} V_{\bn\setminus M}$ has an FDD $F\keq (F_i)$
  satisfying subsequential $(V,U)$ estimates in $W$.
\end{lem}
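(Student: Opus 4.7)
The plan is to construct $(F_i)$ as the natural interleaving of the FDD $(E_k)$ of $Z$ with the one-dimensional coordinate spaces of $(v_i)_{i \in \bn\setminus M}$. After enumerating $M \keq \{m_1\kle m_2\kle\cdots\}$, set $F_{m_k}\keq E_k$ for $k\kin\bn$ and $F_i\keq \spn\{v_i\}$ for $i\kin\bn\setminus M$, each regarded as a subspace of $W\keq Z\oplus_{\ell_\infty} V_{\bn\setminus M}$ in the obvious way. Then $(F_i)$ is an FDD of $W$: any $w\keq(z,v)\kin W$ decomposes via the FDD of $Z$ in its first coordinate and via the basis of $V_{\bn\setminus M}$ in its second, and the resulting series of $F_i$-elements converges to $w$ in the $\ell_\infty$-norm. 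By a routine renorming I may assume $(F_i)$ is bimonotone, absorbing the loss into the final constants.

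Given a normalized block sequence $(w_n)$ of $(F_i)$, let $s_n\keq\min\supp_F w_n$ and write $w_n\keq z_n+v_n$ with $z_n\kin Z$ and $v_n\kin V_{\bn\setminus M}$. Then, restricted to its nonzero terms, $(z_n)$ is a block of $(E_k)$ and $(v_n)$ is a block of $(v_i)_{i\in\bn\setminus M}$. Set $\mu_n\keq\min\supp_F z_n \kin M$ and $\tau_n\keq\min\supp_F v_n\kin\bn\setminus M$ whenever defined. The key structural observation is $s_n\keq\min(\mu_n,\tau_n)$, while $\max(\mu_n,\tau_n)\kleq\max\supp_F w_n\kle s_{n+1}$. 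Consequently, block-stability of $(v_i)$ and $(u_i)$ makes the subsequences $(v_{\mu_n}),(v_{\tau_n}),(v_{s_n})$ pairwise uniformly equivalent (and similarly for the $(u_\bullet)$), so I may freely interchange any of these indices at the cost of a fixed constant.

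For the upper estimate, the $\ell_\infty$-sum gives $\norm{\sum b_n w_n}_W \keq \max\bigl(\norm{\sum b_n z_n}_Z,\norm{\sum b_n v_n}_V\bigr)$. The $U_M$-upper hypothesis applied to the normalized block $(z_n/\norm{z_n}_Z)$, together with $1$-unconditionality of $(u_i)$ and $\norm{z_n}_Z\kleq 1$, yields $\norm{\sum b_n z_n}_Z \klesssim \norm{\sum b_n u_{\mu_n}}_U \klesssim \norm{\sum b_n u_{s_n}}_U$, the second inequality by block-stability. Block-stability of $(v_i)$ together with the domination of $(v_i)$ by $(u_i)$ gives the analogous bound for $\norm{\sum b_n v_n}_V$. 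For the lower estimate, partition $\bn\keq A\cup B$ with $A\keq\{n:\norm{z_n}_Z\keq 1\}$ and $B\keq\{n:\norm{v_n}_V\keq 1\}$; these cover $\bn$ because the maximum is $1$. By $1$-unconditionality of $(v_i)$, $\norm{\sum b_n v_{s_n}}_V$ is majorized by $\norm{\sum_{n\in A}b_n v_{s_n}}_V + \norm{\sum_{n\in B}b_n v_{s_n}}_V$. On $A$, $(z_n)_{n\in A}$ is a normalized block of $(E_k)$, and the $V_M$-lower hypothesis plus block-stability gives $\norm{\sum_{n\in A}b_n v_{s_n}}_V \klesssim \norm{\sum_{n\in A}b_n z_n}_Z \klesssim \norm{\sum b_n w_n}_W$ (the last step using bimonotonicity to bound the partial-sum projection onto the block subsequence indexed by $A$). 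On $B$, block-stability of $(v_i)$ similarly gives $\norm{\sum_{n\in B}b_n v_{s_n}}_V \klesssim \norm{\sum b_n v_n}_V \kleq \norm{\sum b_n w_n}_W$.

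The main obstacle is bookkeeping: correctly handling the case where $w_n$ carries significant mass in either the $Z$- or the $V_{\bn\setminus M}$-component (addressed by the $A/B$ split in the lower estimate), translating cleanly between $\mu_n$, $\tau_n$, and $s_n$ via block-stability, and controlling the partial-sum projections on block subsequences (most easily done after the bimonotone renorming). Beyond this bookkeeping, the argument is the direct interleaving construction one would guess.
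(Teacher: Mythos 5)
Your interleaving construction of $(F_i)$ and your upper estimate follow the paper's proof. The lower estimate, however, has a genuine gap at the step $\bignorm{\sum_{n\in A}b_nz_n}_Z\lesssim\bignorm{\sum_n b_nw_n}_W$, which you justify by ``bimonotonicity''. The set $A=\{n:\norm{z_n}_Z=1\}$ is an arbitrary subset of $\bn$, so passing from $\sum_n b_nz_n$ to $\sum_{n\in A}b_nz_n$ is a projection onto a union of non-contiguous blocks of coordinates; bimonotonicity (with or without renorming) controls only interval projections $P^E_{[m,n]}$. The hypotheses give only subsequential lower and upper estimates for $(E_i)$, which do not imply that block sequences are unconditional, so this ``subset projection'' can be unbounded and the inequality fails in general. (The analogous step on $B$ is harmless only because there the restriction happens inside $V$, where $(v_i)$ is $1$-unconditional.)

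The repair — and this is exactly what the paper does — is to perform the splitting in $V$ rather than in $Z$, so that no subset projection in $Z$ is ever taken. Apply the $C$-$V_M$-lower estimate to the \emph{full weighted} sequence: $\bignorm{\sum_n b_nz_n}_Z\geq\frac1C\bignorm{\sum_n b_n\norm{z_n}_Z\, v_{\mu_n}}_V\gtrsim\bignorm{\sum_n b_n\norm{z_n}_Z\, v_{s_n}}_V$ by block-stability, and similarly $\bignorm{\sum_n b_nv_n}_V\gtrsim\bignorm{\sum_n b_n\norm{v_n}_V\, v_{s_n}}_V$. Since the $\ell_\infty$-sum forces $\max\big(\norm{z_n}_Z,\norm{v_n}_V\big)=1$, we have $\norm{z_n}_Z+\norm{v_n}_V\geq1$ for every $n$, so $1$-unconditionality of $(v_i)$ and the triangle inequality give $\bignorm{\sum_nb_nv_{s_n}}_V\leq\bignorm{\sum_nb_n\norm{z_n}_Z\, v_{s_n}}_V+\bignorm{\sum_nb_n\norm{v_n}_V\, v_{s_n}}_V\lesssim\bignorm{\sum_nb_nw_n}_W$, since each of the two full sums $\bignorm{\sum_nb_nz_n}_Z$ and $\bignorm{\sum_nb_nv_n}_V$ is trivially at most $\bignorm{\sum_nb_nw_n}_W$. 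With this substitution your argument closes; the rest is correct.
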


\begin{proof}
  Choose constants $B,C,D\kin [1,\infty)$ such that $(v_i)$ and
  $(u_i)$ are $B$-block-stable, $(v_i)$ is $D$-dominated by $(u_i)$,
  and $(E_i)$ satisfies subsequential $C$-$(V_M,U_M)$ estimates in
  $Z$. For each $n\kin\bn$ define
  \begin{equation*}
    F_n=
    \begin{cases}
      E_i & \text{if $n\keq m_i$ for some $i\kin\bn$\ ,}\\
      \br\kcdot v_n & \text{if $n\knotin M$\ .}
    \end{cases}
  \end{equation*}
  Then $F\keq (F_n)$ is an FDD for $W$ with projection
  constant $K(F,W)\kleq K(E,Z)$. We now show that $(F_i)$ satisfies
  subsequential $\Cb$-$(V,U)$ estimates in $W$, where $\Cb\keq
  B\kcdot\max\{2C,D\}$.

  Let $(z_i)$ be a normalized block sequence of $(F_n)$ in $W$. For
  each $i\kin\bn$ let $k_i\keq \min\supp_F(z_i)$ and write
  \[
  z_i= z^{(1)}_i+z^{(2)}_i,\qquad\text{where }z^{(1)}_i\kin Z\ ,\
  z^{(2)}_i\kin V_{\bn\setminus M}\ .
  \]
  Fix $(a_i)\kin\coo$. We have
  \begin{align*}
    \Bignorm{\sum_i a_i z^{(1)}_i}_Z &\geq \frac1C \Bignorm{\sum_i
    a_i\kcdot \norm{z^{(1)}_i}_Z\kcdot v_{\min\supp_F(z^{(1)}_i)}}_V\\
    &\geq \frac1{BC} \Bignorm{\sum_i a_i\kcdot
    \norm{z^{(1)}_i}_Z\kcdot v_{k_i}}_V\\
    \intertext{and}
    \Bignorm{\sum_i a_i z^{(2)}_i}_V &\geq \frac1B \Bignorm{\sum_i a_i\kcdot
    \norm{z^{(2)}_i}_Z\kcdot v_{k_i}}_V\ .
  \end{align*}
  It follows that
  \begin{align*}
    \Bignorm{\sum_i a_iz_i}_W &\geq \frac1{BC} \max\bigg\{
    \Bignorm{\sum_i a_i\kcdot \norm{z^{(1)}_i}_Z\kcdot v_{k_i}}_V\ ,
    \Bignorm{\sum_i a_i\kcdot \norm{z^{(2)}_i}_Z\kcdot v_{k_i}}_V
    \bigg\}\\
    &\geq \frac1{2BC} \Bignorm{\sum_i a_i v_{k_i}}_V\ .
  \end{align*}
  Similarly, we have
  \begin{align*}
    \Bignorm{\sum_i a_i z^{(1)}_i}_Z &\leq C \Bignorm{\sum_i a_i\kcdot
      \norm{z^{(1)}_i}_Z\kcdot u_{\min\supp_F(z^{(1)}_i)}}_U\\
    &\leq BC \Bignorm{\sum_i a_i\kcdot \norm{z^{(1)}_i}_Z\kcdot
      u_{k_i}}_U\\
    \intertext{and}
    \Bignorm{\sum_i a_i z^{(2)}_i}_V &\leq B \Bignorm{\sum_i a_i\kcdot
    \norm{z^{(2)}_i}_Z\kcdot v_{k_i}}_V\\
    &\leq BD \Bignorm{\sum_i a_i\kcdot \norm{z^{(2)}_i}_Z\kcdot
      u_{k_i}}_U\ .
  \end{align*}
  It follows that
  \begin{align*}
    \Bignorm{\sum_i a_iz_i}_W &\leq \max\bigg\{
    BC \Bignorm{\sum_i a_i\kcdot \norm{z^{(1)}_i}_Z\kcdot u_{k_i}}_U\
    , BD \Bignorm{\sum_i a_i\kcdot \norm{z^{(2)}_i}_Z\kcdot u_{k_i}}_U
    \bigg\}\\
    &\leq B\kcdot\max\{C,D\}\kcdot \Bignorm{\sum_i a_i u_{k_i}}_U\ .
  \end{align*}
\end{proof}

The next two results show how norm estimates in a space and in its
dual are related.

\begin{prop}
  \label{prop:fdd-est-duality}
  Assume that $Z$ has an FDD $(E_i)$, and let $V$ be a space with a
  normalized and $1$-unconditional basis $(v_i)$. The following
  statements are equivalent:
  \begin{mylist}{(a)}
  \item[(a)]
    $(E_i)$ satisfies subsequential $V$-lower estimates in $Z$.
  \item[(b)]
    $(E^*_i)$ satisfies subsequential $\Vs$-upper estimates in
    $\Zs$.
  \end{mylist}
  (Here subsequential $\Vs$-upper estimates are with respect to
  $(v_i^*)$, the sequence of biorthogonal functionals to $(v_i)$).

  Moreover, if $(E_i)$ is bimonotone in $Z$, then the equivalence
  holds true if one replaces, for some $C\kgeq 1$, $V$-lower estimates
  by $C$-$V$-lower estimates in~(a) and $\Vs$-upper estimates by
  $C$-$\Vs$-upper estimates in~(b).
\end{prop}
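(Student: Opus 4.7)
My plan is to reduce the main equivalence to the ``moreover'' statement and then prove the bimonotone version directly. Passing to the equivalent bimonotone norm $\tnorm{\cdot}$ on $Z$ affects constants only by factors involving $K(E,Z)$, so once the bimonotone version is proved with matching constants, the general equivalence follows. In the bimonotone case both implications come from a single key lemma: if $K(E,Z)\keq 1$ and $(E_i)$ satisfies $C$-$V$-lower estimates in $Z$, then $(E_i^*)$ satisfies $C$-$V^*$-upper estimates in $Z^{(*)}$. Reversing inequalities in the same argument gives the companion: $C$-$V$-upper in $Z$ implies $C$-$V^*$-lower in $Z^{(*)}$. The direction (b)$\Rightarrow$(a) then follows by applying this companion to the bimonotone FDD $(E_i^*)$ in $Z^{(*)}$ with basis $(v_i^*)$, and invoking $Z^{(*)(*)}\keq Z$ together with the analogous identification $V^{(*)(*)}\keq V$ coming from $1$-unconditionality of $(v_i)$.

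To prove the lemma, let $(z_i^*)$ be a normalized block sequence of $(E_i^*)$ with $k_i\keq\min\supp_{E^*}(z_i^*)$, and fix $(a_i)\kin\coo$. Since $Z^{(*)(*)}\keq Z$, it suffices to show $|\langle z,\sum a_iz_i^*\rangle|\kleq C\bignorm{\sum a_iv_{k_i}^*}_{V^*}$ for every $z\kin B_Z$. Setting $y_i\keq P^E_{[k_i,k_{i+1}-1]}(z)$ (with $k_{N+1}\keq\infty$ at the last step), one has $\langle z,z_i^*\rangle\keq\langle y_i,z_i^*\rangle$ and $|\langle y_i,z_i^*\rangle|\kleq\norm{y_i}_Z$, so the standard $V^*$-$V$ duality pairing gives
\[
\Bigabs{\langle z,\sum a_iz_i^*\rangle}\kleq\sum_i|a_i|\norm{y_i}_Z\kleq\Bignorm{\sum a_iv_{k_i}^*}_{V^*}\kcdot\Bignorm{\sum_i\norm{y_i}_Z v_{k_i}}_V.
\]
It therefore suffices to prove the subclaim $\bignorm{\sum_i\norm{y_i}_Z v_{k_i}}_V\kleq C\norm{z}_Z$. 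For this, pick unit vectors $e_i\kin E_{k_i}$ and set $\tilde y_i\keq y_i\kplus\ve e_i$ for small $\ve\kge 0$. Then $(\tilde y_i)$ is a block sequence of $(E_i)$ with $\min\supp_E(\tilde y_i)\keq k_i$, and the $C$-$V$-lower estimate applied to the normalizations of $(\tilde y_i)$ with coefficients $\norm{\tilde y_i}_Z$ yields
\[
\Bignorm{\sum_i\tilde y_i}_Z\kgeq\frac1C\Bignorm{\sum_i\norm{\tilde y_i}_Z v_{k_i}}_V.
\]
Letting $\ve\downarrow 0$ and using the bimonotone bound $\bignorm{\sum_i y_i}_Z\keq\bignorm{P^E_{[k_1,\infty)}z}_Z\kleq\norm{z}_Z$ completes the proof of the subclaim.

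The chief obstacle is a mismatch of indices: although $\min\supp_{E^*}(z_i^*)\keq k_i$, the projection $y_i\keq P^E_{[k_i,k_{i+1}-1]}(z)$ may have $\min\supp_E(y_i)$ strictly greater than $k_i$, so a direct application of the $C$-$V$-lower estimate to the normalizations of $(y_i)$ produces $v_{m_i}$ with $m_i\kgeq k_i$ rather than $v_{k_i}$, and without further hypotheses on $(v_i)$ (\eg left-dominance) the two are not comparable in general. The perturbation $y_i\to y_i\kplus\ve e_i$ followed by the limit $\ve\downarrow 0$ is designed precisely to realign $\min\supp_E$ with $k_i$ while still controlling norms. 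The remaining ingredients are routine: $1$-unconditionality of $(v_i)$ (and hence of $(v_i^*)$), verifying $\bignorm{P^E_{[k_1,\infty)}}\kleq 1$ as a strong limit of $P^E_{[k_1,n]}$ under bimonotonicity, and running the ``parallel'' argument for the upper-to-lower companion statement.
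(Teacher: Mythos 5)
Your proof is essentially correct, and your direction (a)$\Rightarrow$(b) is the same argument as the paper's: decompose $z$ into the blocks $y_i\keq P^E_{[k_i,k_{i+1})}(z)$ aligned with the supports of the $z_i^*$, pair through the $V$--$V^*$ duality using $1$-unconditionality, and apply the lower estimate to the blocks. The index-mismatch issue you isolate ($\min\supp_E(y_i)$ possibly exceeding $k_i$) is real, and your perturbation $y_i\mapsto y_i\kplus\ve e_i$ is exactly the right fix; the paper hides this in its Remark following the definition of subsequential estimates (``if $\max\supp_E(z_{i-1})\kle m_i\kleq\min\supp_E(z_i)$ then $(z_i)$ $C$-dominates $(v_{m_i})$''), so you have simply made explicit what the paper leaves implicit. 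Your reduction of the general statement to the bimonotone case via the renorming $\tnorm{\cdot}$ is also how the paper begins.

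Where you diverge is (b)$\Rightarrow$(a): the paper proves it directly in $Z$, while you route it through $Z^{(*)(*)}\keq Z$ and $V^{(*)(*)}\keq V$ after establishing a companion lemma (upper in $Z$ implies lower in $\Zs$). That route is legitimate (the identifications do hold when $K\keq 1$, and $(v_i^*)$ is again normalized and $1$-unconditional), but your claim that the companion follows ``by reversing inequalities in the same argument'' is the one soft spot. It does not: an upper bound valid for \emph{every} $z\kin B_Z$ reverses into nothing useful, because $\bigabs{\sum a_i\langle y_i,z_i^*\rangle}$ is not bounded below by $\sum\abs{a_i}\norm{y_i}$. To get the lower estimate in the dual you must \emph{construct} a norming vector: pick $z_i\kin B_Z$ with $z_i^*(z_i)$ close to $1$ and $\ran_E(z_i)\ksubset\ran_{E^*}(z_i^*)$ (using bimonotonicity to project), choose $(b_i)$ with $\bignorm{\sum b_iv_{k_i}}_V\keq 1$ norming $\sum a_iv_{k_i}^*$, and test $\sum a_iz_i^*$ against $\sum b_iz_i$, whose norm is at most $C$ by the upper estimate (again via your perturbation remark). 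This is precisely the paper's direct (b)$\Rightarrow$(a) argument transported to the predual, so your ``routine parallel argument'' is really the other half of the paper's proof in disguise rather than a reversal of the first half. With that construction spelled out, your proof is complete.
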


\begin{rem}
  By duality, Proposition~\ref{prop:fdd-est-duality} holds if we
  interchange the words \emph{lower} and \emph{upper} in~(a) and~(b).
\end{rem}

\begin{proof}
  Without loss of generality we may assume that $(E_i)$ is bimonotone
  in $Z$.

  \noindent
  ``(a)$\Rightarrow$(b)'' Let $(z^*_i)$ be a normalized block sequence
  of $E^*\keq(E^*_n)$ in $\Zs$, and for each $i\kin\bn$ let
  $m_i\keq\min\supp_{E^*}(z^*_i)$. Given $(a_i)\kin\coo$, choose
  $z\kin S_Z$ with finite support with respect to $(E_n)$ such that
  $\bignorm{\sum a_iz^*_i}\keq \sum a_iz^*_i(z)$. For each $i\kin\bn$
  write
  \[
  P^E_{[\min\supp_{E^*}(z^*_i),\min\supp_{E^*}(z^*_{i+1}))} (z) =
  b_iz_i\ ,
  \]
  where $z_i\kin S_Z$ and $\abs{b_i}\kleq 1$. Since $(E_n)$ satisfies
  subsequential $C$-$V$-lower estimates in $Z$, we have $\bignorm{\sum
  b_iv_{m_i}}\kleq C\bignorm{\sum b_iz_i}\kleq C$. Hence
  \begin{align*}
    \Bignorm{\sum a_iz^*_i} &=\sum a_ib_i z^*_i(z_i)\leq \sum
    \abs{a_i}\abs{b_i}\\
    &\leq \Bignorm{\sum a_iv^*_{m_i}}\cdot \Bignorm{\sum b_iv_{m_i}}
    \leq C\Bignorm{\sum a_iv^*_{m_i}}\ ,
  \end{align*}
  as required.

  \noindent
  ``(b)$\Rightarrow$(a)'' Let $(z_i)$ be a normalized block sequence
  of $(E_n)$ in $Z$, and for each $i\kin\bn$ let
  $m_i\keq\min\supp_E(z_i)$. Given $(a_i)\kin\coo$, choose
  $(b_i)\kin\coo$ such that $\bignorm{\sum b_iv^*_{m_i}}\keq 1$ and
  $\bignorm{\sum a_iv_{m_i}}\keq \sum a_ib_i$. For each $i\kin\bn$
  there exists $z^*_i\kin S_{\Zs}$ such that $z^*_i(z_i)\keq 1$ and
  $\ran_{E^*}(z^*_i)\ksubset\ran_E(z_i)$. Since $(E^*_n)$ satisfies
  subsequential $C$-$\Vs$-upper estimates in $\Zs$, we have
  $\bignorm{\sum b_iz^*_i}\kleq C$, and hence
  \[
  \Bignorm{\sum a_iz_i}\geq \frac1C \sum a_ib_i = \frac1C
  \Bignorm{\sum a_iv_{m_i}}\ .
  \]
  This completes the proof.
\end{proof}

\begin{prop}
  \label{prop:tree-est-duality}
  Assume that $U$ is a space with a normalized, $1$-unconditional
  basis $(u_i)$ which is $D$-right-dominant for some $D\kgeq 1$, and
  that $X$ is a reflexive space which satisfies subsequential
  $C$-$U$-upper tree estimates for some $C\kgeq 1$.

  Then, for any $\ve\kge 0$, $X^*$ satisfies subsequential
  $(2CD\kplus\ve)$-$\Us$-lower tree estimates.
\end{prop}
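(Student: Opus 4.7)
The plan is to dualize the hypothesis via a direct companion-tree construction. Given a normalized weakly null even tree $(x^*_\alpha)_{\alpha\kin\odd}$ in $X^*$, I would build a normalized weakly null even tree $(\tilde x_\alpha)_{\alpha\kin\odd}$ in $X$ so that $x^*_\alpha(\tilde x_\alpha)\kgeq 1/2-o(1)$ and the cross-pairings $x^*_\beta(\tilde x_\alpha)$ with $\alpha\kneq\beta$ on any common branch are arbitrarily small, apply the hypothesised $C$-$U$-upper tree estimate to $(\tilde x_\alpha)$ to extract a branch in $X$, and then read off the desired $\Us$-lower estimate on the corresponding $X^*$ branch by pairing. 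The factor $2$ arises from the denominator $\norm{y_\alpha\kminus z_\alpha}\kleq 1\kplus\norm{z_\alpha}\kleq 2$ in the renormalization step, and the factor $D$ will appear when translating indices between the relabeled subtree and the original tree via $D$-left-dominance of $(u^*_i)$, which is equivalent to $D$-right-dominance of $(u_i)$.

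Fix a fast-decreasing $(\delta_\ell)\ksubset(0,1)$, chosen in advance based on $C$ and the basis constants of $(u_i)$ and $(u^*_i)$. Proceed by recursion on levels: at $\alpha\keq(n_1,\dots,n_{2\ell-1})$, pick $y_{(\alpha,n)}\kin S_X$ with $x^*_{(\alpha,n)}(y_{(\alpha,n)})\kgeq 1-\delta_\ell$ by Hahn--Banach; reflexivity of $X$ lets us pass to a full subtree on which $(y_{(\alpha,n)})_n$ converges weakly to some $z_\alpha\kin X$ with $\norm{z_\alpha}\kleq 1$. Set $\tilde x_{(\alpha,n)}\keq(y_{(\alpha,n)}\kminus z_\alpha)/\norm{y_{(\alpha,n)}\kminus z_\alpha}$, a normalized weakly null node. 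Since $(x^*_{(\alpha,n)})_n$ is weakly null (equivalently, weak$^*$-null by reflexivity of $X^*$), $x^*_{(\alpha,n)}(z_\alpha)\to 0$, so $x^*_{(\alpha,n)}(\tilde x_{(\alpha,n)})\kgeq(1-\delta_\ell-o(1))/2$. A further diagonal passage to full subtrees ensures that along every branch, $\abs{x^*_\beta(\tilde x_\alpha)}\kleq\delta_{\max(\abs\alpha,\abs\beta)/2}$ whenever $\alpha\kneq\beta$ (using weak nullness of both the $X^*$- and the newly constructed $X$-nodes in the standard way). The successive subtree refinements reindex: each position $j$ in a branch of the relabeled tree corresponds to a position $n_j$ in the original tree with $n_j\kgeq m_j$, where $m_j$ is the new index.

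The hypothesised $C$-$U$-upper tree estimate applied to $(\tilde x_\alpha)$ produces a branch $(\tilde x_{(m_1,\dots,m_{2i})})_i$ that is $C$-dominated by $(u_{m_{2i-1}})$; in original indices this is $(\tilde x_{\gamma_i})_i$ with $\gamma_i\keq(n_1,\dots,n_{2i})$. For $(a_i)\kin\coo$ and $\eta\kge 0$, pick $(b_j)\kin\coo$ with $\bignorm{\sum b_ju_{m_{2j-1}}}_U\keq 1$ and $\sum a_jb_j\kgeq\bignorm{\sum a_iu^*_{m_{2i-1}}}_{\Us}-\eta$. Setting $z\keq\sum b_j\tilde x_{\gamma_j}$ gives $\norm{z}_X\kleq C$, and
\[
\sum_i a_ix^*_{\gamma_i}(z)=\sum_i a_ib_ix^*_{\gamma_i}(\tilde x_{\gamma_i})+\sum_{i\kneq j}a_ib_jx^*_{\gamma_i}(\tilde x_{\gamma_j}),
\]
whose diagonal is $\kgeq(1/2-o(1))\sum a_ib_i$ and whose off-diagonal is uniformly small (using $1$-unconditionality of $(u_i)$ and $(u^*_i)$ to bound $\abs{a_i},\abs{b_j}$ in terms of the respective norms, and absorbing by the pre-chosen fast decay of $(\delta_\ell)$). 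Combined with $\sum a_ix^*_{\gamma_i}(z)\kleq C\bignorm{\sum a_ix^*_{\gamma_i}}$, this yields $\bignorm{\sum a_iu^*_{m_{2i-1}}}_{\Us}\kleq 2C\bignorm{\sum a_ix^*_{\gamma_i}}+O(\eta)$; since $n_{2i-1}\kgeq m_{2i-1}$ and $(u^*_i)$ is $D$-left-dominant, $\bignorm{\sum a_iu^*_{n_{2i-1}}}_{\Us}\kleq D\bignorm{\sum a_iu^*_{m_{2i-1}}}_{\Us}$, and sending $\eta\to 0$ gives the claim. The main obstacle is the simultaneous uniform control of the countably many cross-pairings against \emph{every} possible coefficient sequence $(a_i)$: this forces the diagonal choice of $(\delta_\ell)$ to be made in advance of the construction rather than along the way, and requires careful bookkeeping of how the unconditional basis constants of $(u_i),(u^*_i)$ enter the error estimates.
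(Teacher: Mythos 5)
Your proposal is correct and follows essentially the same route as the paper's proof: build a companion normalized weakly null even tree in $X$ from near-norming vectors, subtract weak limits and renormalize (whence the factor $2$), prune to make cross-pairings summably small, apply the $C$-$U$-upper tree estimate to the companion tree, dualize with a norming coefficient sequence, and convert subtree indices to original indices via the $D$-left-dominance of $(u^*_i)$. The paper packages the node-level step as a separate observation about weakly null sequences and uses a single pruning parameter $\eta$ with decay $\eta/2^{\max\{\abs{\alpha},\abs{\beta}\}}$, but the argument and the source of each constant are the same as yours.
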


\begin{rem}
  One might ask whether or not the converse of
  Proposition~\ref{prop:tree-est-duality} is true, \ie similar to the
  FDD case, whether $X$ satisfies subsequential $U$-upper tree
  estimates if $X^*$ satisfies subsequential $\Us$-lower tree
  estimates.

  The answer is affirmative under certain conditions on $U$, but we do
  not give a direct proof for that fact. Instead, we shall deduce it
  from one of our main embedding theorems (see
  Corollary~\ref{prop:full-tree-est-duality} in
  Section~\ref{section:embedding}).
\end{rem}

\begin{proof}
  We start with a simple observation. Let $(x^*_i)$ be a normalized,
  weakly null sequence in $X^*$. For each $n\kin\bn$ pick $x_n\kin
  S_X$ with $x^*_n(x_n)\keq 1$. There exist $y\kin X$ and $k_1\kle
  k_2\kle\dots$ in $\bn$ such that $x_{k_n}\stackrel{w}{\to} y$. Given
  $\eta\kin (0,1)$, there exists $n_0\kin\bn$ such that
  $\abs{x^*_{k_n}(y)}\kle \eta$ for all $n\kgeq n_0$. Set
  \[
  y^*_n=x^*_{k_{n_0+n}}\quad\text{and}\quad y_n =
  \frac{x_{k_{n_0+n}}-y}{\norm{x_{k_{n_0+n}}-y}}\ ,\qquad n\kin\bn\ .
  \]
  We have found, for given $\eta\kin(0,1)$, a subsequence $(y^*_i)$ of
  $(x^*_i)$ and a normalized, weakly null sequence $(y_i)$ in $X$
  satisfying $y^*_n(y_n)\kge (1\kminus\eta)/2$ for all $n\kin\bn$.

  Now let $(x^*_\alpha)_{\alpha\in\odd}$ be a normalized, weakly null
  even tree in $X^*$. By the above observation we can find a
  normalized, weakly null even tree $(y_\alpha)_{\alpha\in\odd}$ in $X$
  and a full subtree $(y^*_\alpha)_{\alpha\in\odd}$ of
  $(x^*_\alpha)_{\alpha\in\odd}$ such that $y^*_\alpha(y_\alpha)\kge
  (1\kminus\eta)/2$ for all $\alpha\kin\odd$. By a further pruning of
  these trees, we can also assume that $\abs{y^*_\alpha(y_\beta)}\kle
  \eta/2^{\max\{\abs{\alpha},\abs{\beta}\}}$ whenever
  $\alpha\kle\beta$ or $\beta\kle \alpha$.

  By assumption, there exist $m_1\kle m_2\kle\dots$ in $\bn$ such that
  $\big(y_{(m_1,m_2,\dots,m_{2i})}\big)$ is $C$-dominated
  by $(u_{m_{2i-1}})$. Given $(a_i)\kin\coo$, there exists
  $(b_i)\kin\coo$ such that $\bignorm{\sum b_iu_{m_{2i-1}}}\keq 1$ and
  $\sum a_ib_i\keq\bignorm{\sum a_iu^*_{m_{2i-1}}}$. So
  $\bignorm{\sum b_i y_{(m_1,m_2,\dots,m_{2i})}}\kleq C$, and hence
  \begin{align*}
    \Bignorm{\sum a_i y^*_{(m_1,m_2,\dots,m_{2i})}} &\geq \frac1C
    \bigg( \sum a_ib_i\frac{1\kminus\eta}2 - \sum _{i\neq j}
    \abs{a_i}\abs{b_j}\eta/ 2^{\max\{ i,j\}} \bigg)\\
    &> \frac1{2C+\ve/D} \Bignorm{\sum a_iu^*_{m_{2i-1}}}
  \end{align*}
  provided $\eta$ is sufficiently small. Now the branch
  $\big(y^*_{(m_1,m_2,\dots,m_{2i})}\big)$ of
  $\big(y^*_\alpha\big)_{\alpha\in\odd}$ corresponds to a branch
  $\big(x^*_{(n_1,n_2,\dots,n_{2i})}\big)$ of
  $\big(x^*_\alpha\big)_{\alpha\in\odd}$, where $n_1\kle n_2\kle\dots$
  and $m_i\kleq n_i$ for all $i\kin\bn$. Since $(u_i)$ is
  $D$-right-dominant, it follows that $(u^*_i)$ is $D$-left-dominant,
  and hence the above inequality shows that
  $\big(x^*_{(n_1,n_2,\dots,n_{2i})}\big)$ $(2CD\kplus\ve)$-dominates
  $\big(u^*_{n_{2i-1}}\big)$.
\end{proof}

We conclude this section with a key combinatorial result. We need to
fix some terminology first.

Given a Banach space $X$, we let $(\bn\ktimes S_X)^\omega$ denote the
set of all sequences $(k_i,x_i)$, where $k_1\kle k_2\kle\dots$ are
positive integers, and $(x_i)$ is a sequence in $S_X$. We equip the
set $(\bn\ktimes S_X)^\omega$ with the product topology of the
discrete topologies of $\bn$ and $S_X$. Given $\cA\ksubset (\bn\ktimes
S_X)^\omega$ and $\ve\kge 0$, we let
\[
\cA_\ve = \Big\{ (\ell_i,y_i)\kin (\bn\ktimes S_X)^\omega :\,\E
(k_i,x_i)\kin\cA\quad k_i\kleq\ell_i\,,\ \norm{x_i\kminus
  y_i}\kle\ve\kcdot 2^{-i}\ \V i\kin\bn\Big\}\ ,
\]
and we let $\overline{\cA}$ be the closure of $\cA$ in
$(\bn\ktimes S_X)^\omega$.

Given $\cA\ksubset (\bn\ktimes S_X)^\omega$, we say that an even tree
$(x_\alpha)_{\alpha\in \odd}$ in $X$ \emph{has a branch in $\cA$ }if
there exist $n_1\kle n_2\kle\dots$ in $\bn$ such that
$\big(n_{2i-1},x_{(n_1,n_2,\dots,n_{2i})}\big)\kin\cA$.

\begin{prop}
  \label{prop:tree-est-sb-est}
  Let $X$ be an infinite-dimensional (closed) subspace of a reflexive
  space $Z$ with an FDD $(E_i)$. Let $\cA\ksubset (\bn\ktimes
  S_X)^\omega$. Then the following are equivalent.
  \begin{mylist}{(a)}
  \item[(a)]
    For all $\ve\kge 0$ every normalized, weakly null even tree in $X$
    has a branch in $\overline{\cA_\ve}$.
  \item[(b)] 
    For all $\ve\kge 0$ there exist $(K_i)\ksubset\bn$ with $K_1\kle
    K_2\kle\dots$, $\deltab\keq (\delta_i)\ksubset (0,1)$ with
    $\delta_i\downarrow 0$, and a blocking $F\keq (F_i)$ of $(E_i)$
    such that if $(x_i)\ksubset S_X$ is a $\deltab$-skipped block
    sequence of $(F_n)$ in $Z$ with
    $\norm{x_i-P^F_{(r_{i-1},r_i)}x_i}\kle\delta_i$ for all
    $i\kin\bn$, where $1\kleq r_0\kle r_1\kle r_2\kle\dots$, then
    $(K_{r_{i-1}},x_i)\kin\overline{\cA_\ve}$.
  \end{mylist}
\end{prop}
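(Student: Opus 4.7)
The plan is to prove the two implications separately. The direction \textbf{(b)}$\Rightarrow$\textbf{(a)} is a direct perturbation and weak-null approximation argument; the harder direction \textbf{(a)}$\Rightarrow$\textbf{(b)} I would prove by contraposition, appealing to closed-game determinacy together with a discretization that exploits reflexivity and the FDD structure.

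For \textbf{(b)}$\Rightarrow$\textbf{(a)}, fix $\ve\kge 0$ and apply (b) to obtain $(K_i)$, $\deltab\keq(\delta_i)$, and a blocking $F\keq(F_i)$ of $(E_i)$. Given a normalized, weakly null even tree $(x_\alpha)_{\alpha\in\odd}$ in $X$, I would select indices $n_1\kleq n_2\kleq\dots$ and $1\kleq r_0\kle r_1\kle\dots$ inductively so that the branch element $x_i\keq x_{(n_1,\dots,n_{2i})}$ satisfies $n_{2i-1}\kgeq K_{r_{i-1}}$ and $\norm{x_i\kminus P^F_{(r_{i-1},r_i)}x_i}\kle\delta_i$. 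For the inductive step, set $n_{2i-1}\keq\max\{n_{2i-2},K_{r_{i-1}}\}$ and use that the node $\big(x_{(n_1,\dots,n_{2i-1},n)}\big)_{n>n_{2i-1}}$ is weakly null: since $P^F_{[1,r_{i-1}]}$ has finite rank it sends this node to $0$ in norm, so I can choose $n_{2i}$ large enough that the head of $x_i$ is small, and then $r_i\kge r_{i-1}$ large enough that the tail $\norm{P^F_{(r_i,\infty)}x_i}$ is small. The resulting branch $(x_i)$ is a $\deltab$-skipped block sequence of $(F_n)$ satisfying the hypothesis of (b), so $\big((K_{r_{i-1}},x_i)\big)_i\kin\overline{\cA_\ve}$. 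Because the topology on $(\bn\ktimes S_X)^\omega$ is a product of discrete topologies, the first coordinate of a sequence lying in $\overline{\cA_\ve}$ can be replaced by any larger integer and the sequence stays in $\overline{\cA_\ve}$; combined with $n_{2i-1}\kgeq K_{r_{i-1}}$, this yields $\big((n_{2i-1},x_i)\big)_i\kin\overline{\cA_\ve}$, the desired branch.

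For \textbf{(a)}$\Rightarrow$\textbf{(b)}, suppose for contradiction that (b) fails for some $\ve_0\kge 0$. I would construct a normalized weakly null even tree in $X$ all of whose branches avoid $\overline{\cA_{\ve_0/2}}$, contradicting (a) applied at $\ve\keq\ve_0/2$. The first step is to recast (a) in game form, in the spirit of Proposition~\ref{prop:game-tree-def}(b): in the game where S plays $(k_i,Y_i)\kin\bn\ktimes\cof(X)$ with $k_1\kle k_2\kle\dots$ and P responds with $x_i\kin S_{Y_i}$, the winning set $\overline{\cA_{\ve_0/2}}$ is closed, hence by Gale--Stewart the game is determined; were (a) true, P could not win (else P's strategy would produce, as in~\ref{prop:game-tree-def}(b), a weakly null even tree with no branch in $\overline{\cA_{\ve_0/2}}$), so S would have a winning strategy $\phi$. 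To convert $\phi$ into path-independent data $(K_i),\deltab,F$ witnessing (b), I would discretize the history space: since $Z$ is reflexive with FDD $(E_i)$, every $Y\kin\cof(X)$ contains the tail $X\cap\bigoplus_{j>N}E_j$ up to arbitrarily small error for $N\keq N(Y)$ large, and the skipped block inputs at stage $i$ lie in a finite-dimensional FDD piece whose unit sphere carries a countable dense net; this collapses the effective history for $\phi$ to a countable set per stage, allowing a uniform integer bound defining $(K_i)$ and a blocking $F$ with $\min\supp_E(F_i)$ beyond every relevant $N(Y_i)$. The resulting $(K_i),\deltab,F$ then witness (b) for $\ve_0/2$, hence for $\ve_0$, contradicting the failure of (b).

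The main obstacle is the discretization in \textbf{(a)}$\Rightarrow$\textbf{(b)}. The strategy $\phi$ depends on P's history in $S_X$, which is a continuum, so path-independent data $(K_i),\deltab,F$ cannot be read off directly. The workaround, standard in this circle of ideas, is that reflexivity plus the FDD forces $\phi$'s subspace outputs to contain arbitrarily small tail perturbations, so that $\phi$ need only be tabulated against a countable net of FDD-supported vectors; here the quantitative choice of $\deltab$---matched to the net density and the projection constant of $(E_i)$ in $Z$---is critical. Once this countable reduction is in place, bounding the integer outputs $k_i$ and choosing the blocking $F$ beyond all $N(Y_i)$ become routine; verifying that every $\deltab$-skipped block sequence of $F$ is an admissible P-play against $\phi$ is the technical heart of the argument.
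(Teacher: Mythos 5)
Your overall architecture is the same as the paper's: (b)$\Rightarrow$(a) by selecting a branch that is a $\deltab$-skipped block sequence with $n_{2i-1}\kgeq K_{r_{i-1}}$, and (a)$\Rightarrow$(b) via determinacy of a closed game followed by a discretization of S's winning strategy. Your (b)$\Rightarrow$(a) is complete and correct (indeed you make explicit the upward monotonicity of $\overline{\cA_\ve}$ in the first coordinates, which the paper uses silently).

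The gap is in (a)$\Rightarrow$(b), precisely at the step you yourself flag as the technical heart: ``verifying that every $\deltab$-skipped block sequence of $F$ is an admissible P-play against $\phi$.'' In the game as you set it up, it is not: P must play $x_i\kin S_{Y_i}$ \emph{exactly}, where $Y_i\keq\phi(x_1,\dots,x_{i-1})$ is an arbitrary element of $\cof(X)$, whereas a skipped block vector only lies \emph{near} a tail subspace. Since strategies are not continuous in the history, $\phi$'s guarantee on exact plays says nothing about nearby ones, and replacing the actual history by net points changes $\phi$'s subspace outputs entirely. The paper resolves this by changing the game before invoking determinacy: S plays integers $k_n,m_n$ (so his subspace moves are the tail spaces $Z_{m_n}\keq\overline{\bigoplus_{i>m_n}E_i}$), P is only required to satisfy $d(x_n,Z_{m_n})\kle\ve'\kcdot 2^{-n}$, and the target is enlarged to $\overline{\cA_{5\ve}}$. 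The built-in slack is what (i) lets P's strategy outputs be perturbed to a genuinely weakly null tree in the direction ``S has no winning strategy $\Rightarrow$ (a) fails,'' and (ii) makes a skipped block sequence, after replacement by nearby points from \emph{finite} $\delta$-nets $S_{I,\delta}$ on the relevant FDD pieces, a literally legal play against $(\phi,\psi)$, landing in $\overline{\cA_{5\ve}}$ and hence putting the original sequence in $\overline{\cA_{8\ve}}$; the cascade $\ve\to 5\ve\to 8\ve$ is absorbed by the universal quantifier over $\ve$. Two further points in your sketch need the same repair: the nets must be \emph{finite} (compactness of spheres of finite-dimensional pieces), not merely countable and dense, since $K_n$ and the blocking are defined as maxima of $\phi,\psi$ over all net-histories up to stage $n$; and S's strategy must be used monotonically in the subspace coordinate ($m_n\kgeq\psi(\cdot)$ rather than $m_n\keq\psi(\cdot)$), which is harmless because shrinking P's options preserves a win for S but is needed for the path-independent blocking to serve all histories simultaneously.
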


\begin{proof}
  For each $m\kin\bn$ we set $Z_m\keq\overline{\bigoplus _{i>m}
    E_i}$. Given $\ve\kge 0$, we consider the following game between
  players S (subspace chooser) and P (point chooser). The game has an
  infinite sequence of moves; on the $n^{\text{th}}$ move ($n\kin\bn$)
  S picks $k_n,m_n\kin\bn$ and P responds by picking $x_n\kin S_X$
    with $d(x_n,Z_{m_n})\kle\ve'\kcdot 2^{-n}$, where
    $\ve'\keq\min\{\ve,1\}$. S wins the game if the sequence
    $(k_i,x_i)$ the players generate ends up in
    $\overline{\cA_{5\ve}}$, otherwise P is declared the winner. We
    will refer to this as the $(\cA,\ve)$-game and show that
    statements~(a) and~(b) above are equivalent to
  \begin{mylist}{(a)} \itshape
  \item[(c)]
    For all $\ve\kge 0$ S has a winning strategy for the
    $(\cA,\ve)$-game.
  \end{mylist}
  Note that statement~(b) yields a particular winning strategy
  for~S, so the implication (b)$\Rightarrow$(c) is clear, however this
  is included in the sequence of implications
  (a)$\Rightarrow$(c)$\Rightarrow$(b)$\Rightarrow$(a) which is what we
  are about to demonstrate.
  
  \noindent
  ``(a)$\Rightarrow$(c)'' Assume that for some $\ve\kge 0$ S does not
  have a winning strategy for the $(\cA,\ve)$-game. Then there is a
  winning strategy $\phi$ for the point chooser P. Thus $\phi$ is a
  function taking values in $S_X$ such that for all sequences $(k_i),
  (m_i)$ in $\bn$ if $x_n\keq\phi(k_1,m_1,k_2,m_2,\dots,k_n,m_n)$ for
  all $n\kin\bn$, then $d(x_i,Z_{m_i})\kle\ve'\kcdot 2^{-i}$ for all
  $i\kin\bn$ and $(k_i,x_i)\knotin\overline{\cA_{5\ve}}$. We will now
  construct a normalized, weakly null even tree
  $(x_\alpha)_{\alpha\in\odd}$ in $X$ to show that~(a) fails. This
  will be a recursive construction which also builds auxiliary trees
  $(y_\alpha)_{\alpha\in\odd}$ in $X$ and $(m_\alpha)_{\alpha\in\odd}$
  in $\bn$.

  Fix positive integers $\ell,\ n_1\kle n_2\kle\dots\kle n_{2\ell-1}$.
  Let $\alpha\keq (n_1,n_2,\dots,n_{2\ell-1})\kin
  T_\infty$, and for $1\kleq j\kleq \ell$ set $k_j\keq
  n_{2j-1}$. Assume that for $1\kleq j\kle\ell$ we have already defined
  $x_j\keq x_{(n_1,n_2,\dots,n_{2j})}$, $y_j\keq
  y_{(n_1,n_2,\dots,n_{2j})}$ and $m_j\keq
  m_{(n_1,n_2,\dots,n_{2j})}$ such that $y_j\keq
  \phi(k_1,m_1,k_2,m_2,\dots,k_j,m_j)$. We will now
  construct the nodes $\big(x_{(\alpha,n)}\big)$,
  $\big(y_{(\alpha,n)}\big)$ and $\big(m_{(\alpha,n)}\big)$. Set
  \[
  z_i = \phi (k_1,m_1,\dots,k_{\ell-1},m_{\ell-1},k_\ell,i)\ ,\qquad
  i\kin\bn\ .
  \]
  Note that $z_i\kin S_X$ and $d(z_i,Z_i)\kle \ve'\kcdot 2^{-\ell}$
  for all $i\kin\bn$. We now pass to a weakly convergent subsequence:
  there exist $i_1\kle i_2\kle \dots$ in $\bn$ and $z\kin X$ such
  that $z_{i_n}\stackrel{w}{\to}z$ as $n\to\infty$. Note that
  $\norm{z}\kleq \ve'\kcdot 2^{-\ell}$. For each $n\kin\bn$ set
  \[
  w_n\keq \frac{z_{i_n}-z}{\norm{z_{i_n}-z}}\ .
  \]
  Note that $(w_n)$ is a normalized, weakly null sequence in $X$, and
  \[
  \norm{z_{i_n}-w_n}\leq \frac{2\ve'\kcdot 2^{-\ell}}{1-\ve'\kcdot
    2^{-\ell}}\leq 4\ve\kcdot 2^{-\ell}
  \]
  for all $n\kin\bn$. We now set
  $x_{(n_1,n_2,\dots,n_{2\ell-1},n)}\keq w_n$,
  $y_{(n_1,n_2,\dots,n_{2\ell-1},n)}\keq z_{i_n}$ and
  $m_{(n_1,n_2,\dots,n_{2\ell-1},n)}\keq i_n$ for all $n\kin\bn$ with
  $n\kge n_{2\ell-1}$. This completes the recursive construction.

  It follows by induction that $(x_\alpha)_{\alpha\in\odd}$ is a
  normalized, weakly null even tree in $X$ and
  $(y_\alpha)_{\alpha\in\odd}$ is a normalized even tree in $X$ such
  that for all $\alpha\kin\odd$ we have $\norm{x_\alpha\kminus
  y_\alpha}\kleq 4\ve\kcdot 2^{-\abs{\alpha}/2}$. Moreover, given a
  sequence
  $n_1\kle n_2\kle\dots$ in $\bn$, setting $k_j\keq n_{2j-1},\ m_j\keq
  m_{(n_1,n_2,\dots,n_{2j})}$ and $y_j\keq y_{(n_1,n_2,\dots,n_{2j})}$
  for all $j\kin\bn$, we have
  \[
  y_n\keq \phi(k_1,m_1,k_2,m_2,\dots,k_n,m_n)\qquad\text{for all
  }n\kin\bn\ .
  \]
  Hence no branch of $(y_\alpha)_{\alpha\in\odd}$ is in
  $\overline{\cA_{5\ve}}$, and no branch of
  $(x_\alpha)_{\alpha\in\odd}$ is in $\overline{\cA_\ve}$.

  \noindent
  ``(c)$\Rightarrow$(b)'' Let $(\phi,\psi)$ be a winning strategy for
  $S$ in the $(\cA,\ve)$-game. Thus $\phi$ and $\psi$ are functions
  taking values in $\bn$ such that for all sequences $(k_i), (m_i)$ in
  $\bn$ and $(x_i)$ in $S_X$ if $d(x_n,Z_{m_n})\kle \ve'\kcdot 2^{-n}$,
  $k_n\keq\phi(x_1,x_2,\dots,x_{n-1})$ and
  $m_n\kgeq\psi(x_1,x_2,\dots,x_{n-1})$ for all
  $n\kin\bn$, then $(k_i,x_i)\kin\overline{\cA_{5\ve}}$. For each
  interval $I\ksubset\bn$ and $\delta\kge 0$ fix a finite set
  $S_{I,\delta}\ksubset S_X$ such that for all $x\kin S_{I,\delta}$ we
  have $\norm{x\kminus P^E_Ix}\kle\delta$ and for all $y\kin S_X$ if
  $\norm{y\kminus P^E_Iy}\kle\delta$, then there exists $x\kin
  S_{I,\delta}$ such that $\norm{x\kminus y}\kle 3\delta$.

  We now construct a blocking $(F_i)$ of $(E_i)$ by recursion. Let
  $m_1\keq\psi()$ and $F_1\keq\bigoplus _{i=1}^{m_1} E_i$. Choose any
  $m_2\kge m_1$ and set $F_2\keq\bigoplus_{i=m_1+1}^{m_2} E_i$. Assume
  that for some $n\kin\bn,\ n\kgeq 3$ , we have already chosen
  $m_1\kle\dots\kle m_{n-1}$ and we have set
  $F_j\keq\bigoplus_{i=m_{j-1}+1}^{m_j} E_i$ for $1\kleq j\kle n$
  ($m_0\keq 0$). We now choose $m_n\kge m_{n-1}$ such that if
  $\ell\kin\bn,\ 1\kleq r_0\kle r_1\kle\dots\kle r_\ell\kleq n$ and
  \[
  x_j\kin S_{[m_{r_{j-1}}+1,m_{r_j-1}],\ve'\cdot
  2^{-j}}\qquad\text{for }1\kleq j\kleq\ell\ ,
  \]
  then $m_n\kgeq\psi(x_1,x_2,\dots,x_\ell)$. Finally, we set $F_n\keq
  \bigoplus_{i=m_{n-1}+1}^{m_n} E_i$. This completes the recursive
  construction.

  For each $n\kin\bn$ let $\delta_n\keq\ve'\kcdot 2^{-n}$, and let
  $K_n$ be chosen so that $K_n\kgeq\phi()$, and if $\ell\kin\bn,\
  1\kleq r_0\kle r_1\kle\dots\kle r_\ell\kleq n$ and $x_j\kin
  S_{[m_{r_{j-1}}+1,m_{r_j-1}],\ve'\cdot 2^{-j}}$ for $1\kleq
  j\kleq\ell$,
  then $K_n\kgeq\phi(x_1,\dots,x_\ell) $. We can of course also ensure
  that the sequence $(K_i)$ is strictly increasing. Let $\deltab\keq
  (\delta_i)$. We will now verify that~(b) holds. Let $(y_i)$ be a
  $\deltab$-skipped block sequence of $(F_n)$: there exist $1\kleq
  r_0\kle r_1\kle r_2\kle\dots$ such that
  \[
  \norm{y_i-P^F_{[r_{i-1}+1,r_i-1]}y_i}<\ve'\kcdot
  2^{-i}\qquad\text{for all }i\kin\bn\ ,
  \]
  that is to say,
  \[
  \norm{y_i-P^E_{[m_{r_{i-1}}+1,m_{r_i-1}]}y_i}<\ve'\kcdot 2^{-i}
  \qquad\text{for all }i\kin\bn\ .
  \]
  For each $i\kin\bn$ there exists $x_i\kin
  S_{[m_{r_{i-1}}+1,m_{r_i-1}],\ve'\cdot 2^{-i}}$ such that
  $\norm{x_i-y_i}\kle 3\ve'\kcdot 2^{-i}$. Set
  \[
  k_i\keq\phi(x_1,\dots,x_{i-1})\qquad\text{for each }i\kin\bn\ .
  \]
  Consider the sequence $k_1, m_{r_0}, x_1, k_2, m_{r_1},
  x_2,\dots$. We have $x_i\kin S_X$ and
  \[
  d\big(x_i,Z_{m_{r_{i-1}}}\big)\leq
  \bignorm{x_i-P^E_{[m_{r_{i-1}}+1,m_{r_i-1}]}x_i} < \ve'\kcdot 2^{-i}
  \]
  for all $i\kin\bn$. Moreover $m_{r_0}\kgeq m_1\kgeq \psi()$,
  $K_{r_0}\kgeq K_1\kgeq\phi()\keq k_1$, and
  given $\ell\kin\bn$, setting $n\keq r_\ell$, we have $1\kleq r_0\kle
  r_1\kle\dots\kle r_\ell\kleq n$ and $x_i\kin
  S_{[m_{r_{i-1}}+1,m_{r_i-1}],\ve'\cdot 2^{-i}}$ for $1\kleq
  i\kleq\ell$. It follows that ($n\kgeq 3$ and) $m_{r_\ell}\keq
  m_n\kgeq \psi(x_1,\dots,x_\ell)$ and $k_{\ell+1}\kleq
  K_n\keq K_{r_\ell}$. So $(k_i,x_i)\kin\overline{\cA_{5\ve}}$, and
  hence $(K_{r_{i-1}},y_i)\kin\overline{\cA_{8\ve}}$.

  \noindent
  ``(b)$\Rightarrow$(a)'' Given $\ve\kge 0$, let $(K_i),\
  \deltab\keq(\delta_i)$ and $(F_i)$ be as in statement~(b). First
  note that if $(x_i)$ is a normalized, weakly null sequence in $X$,
  then
  \[
  \V \eta\kge 0\ \V p\kin\bn\quad \E n\kin\bn\ \E q\kge
  p\qquad\text{such that}\quad\norm{x_n-P^F_{(p,q)}x_n}\kle\eta\ .
  \]
  Indeed, the sequence $\big(P^F_{[1,p]}x_i\big)$ is weakly null, and
  hence norm-null, so there exists $n\kin\bn$ such that
  $\norm{P^F_{[1,p]}x_n}\kle\eta/2$. One can then choose $q\kge p$
  such that $\norm{P^F_{[q,\infty)}x_n}\kle\eta/2$. The claim now
  follows by triangle-inequality. 

  Now let $(x_\alpha)_{\alpha\in\odd}$ be a normalized, weakly null
  even tree in $X$. We choose positive integers $n_1\kle n_2\kle\dots$
  and $1\keq r_0\kle r_1\kle r_2\kle\dots$ by recursion. For
  $\ell\kin\bn$ we first choose $n_{2\ell-1}\kge K_{r_{\ell-1}}$ such
  that $n_{2\ell-1}\kge n_{2\ell-2}$ ($n_0\keq 0$), and then choose
  $n_{2\ell}\kge n_{2\ell-1}$ and $r_\ell\kge r_{\ell-1}$ such that
  \[
  \bignorm{x_{(n_1,n_2,\dots,n_{2\ell})}-
    P^F_{(r_{\ell-1},r_\ell)}x_{(n_1,n_2,\dots,n_{2\ell})}}
  \kle\delta_\ell\ .
  \]
  By assumption~(b) we have
  $\big(n_{2i-1},x_{(n_1,n_2,\dots,n_{2i})}\big)
  \kin\overline{\cA_\ve}$.
\end{proof}

\section{The space $Z^V(E)$}
\label{section:zv}

Let $Z$ be a space with an FDD $E=(E_i)$, and let $V$ be a space with
a $1$-unconditional and normalized basis $(v_i)$. The space
$Z^V\keq Z^V(E)$ is defined to be the completion of $\coo(\oplus E_i)$
with respect to the following norm $\norm{\cdot}_{Z^V}$.
\[
\norm{z}_{Z^V}=\max_{%
  \begin{subarray}{c}
    k\in\bn\\
    1\leq n_0<n_1<n_2<\dots<n_k
  \end{subarray}}
  \Bignorm{\sum_{j=1}^k\norm{P^E_{[n_{j-1},n_j)}(z)}_Z\kcdot
    v_{n_{j-1}}}_V\qquad\text{for }z\kin\coo(\oplus E_i)\ .
\]
Note that if $(v_i)$ is $C$-block-stable and $D$-right-dominant, then
the projection constant $K(E,Z^V)$ of $(E_i)$ in $Z^V$ satisfies
\[
K(E,Z^V)\leq \min\{ K(E,Z), C, D, 2\}\ .
\]
Here we allow $C\keq\infty$ or $D\keq\infty$ if $(v_i)$ is not
block-stable or not right-dominant, respectively. Note also that if
$\norm{\cdot}$ and $\norm{\cdot}'$ are equivalent norms on $Z$, then
the corresponding norms $\norm{\cdot}_{Z^V}$ and $\norm{\cdot}_{Z^V}'$
are equivalent on $\coo(\oplus E_i)$. This often allows us, when
examining the space $Z^V$, to assume that $(E_i)$ is bimonotone in
$Z$.

Our first set of results culminating in Corollary~\ref{cor:refl-ZV}
determine when the space $Z^V(E)$ is reflexive.
\begin{lem}
  \label{lem:bs-est-in-ZV}
  Every normalized block sequence $(z_i)$ of $(E_n)$ in $Z^V$
  $1$-dominates some block sequence $(b_i)$ of $(v_n)$ that satisfies
  \[
  1/2\kleq\norm{b_i}_V\kleq 1\quad\text{and}\quad
  \ran(b_i)\ksubset\ran_E(z_i)\qquad\text{for all }i\kin\bn.
  \]
  (Here the range, $\ran(x)$, of $x\keq\sum a_iv_i\kin V$ is the
  smallest interval in $\bn$ containing $\{i\kin\bn:\,a_i\kneq 0\}$.)
\end{lem}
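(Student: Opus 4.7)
The plan is to construct each $b_i$ by modifying a cut sequence that attains the maximum in the definition of $\norm{z_i}_{Z^V}\keq 1$. Write $\ran_E(z_i)\keq [p_i,q_i]$. Since $z_i\kin\coo(\oplus E_j)$ has finite support, the maximum in the $Z^V$-norm definition is attained at some $1\kleq n_0\kle n_1\kle\dots\kle n_k$. By first discarding intervals whose projection onto $z_i$ vanishes, and then (if necessary) inserting an extra cut at $p_i$ --- an operation that can only increase the $V$-norm by the coefficient-domination form of $1$-unconditionality --- we may assume $n_0\kleq p_i\kle n_1$ and $n_{k-1}\kleq q_i\kle n_k$. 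Writing $c_j\keq\norm{P^E_{[n_{j-1},n_j)}z_i}_Z$, note that $c_1\keq\norm{P^E_{[p_i,n_1)}z_i}_Z$ since $z_i$ is supported in $[p_i,q_i]$. Set
\[
b_i\keq c_1\kcdot v_{p_i}\kplus\sum_{j=2}^k c_j\kcdot v_{n_{j-1}}\in V\ ;
\]
all indices of basis vectors appearing in $b_i$ lie in $[p_i,q_i]$, so $\ran(b_i)\ksubset\ran_E(z_i)$.

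The upper bound $\norm{b_i}_V\kleq 1$ is immediate: $b_i$ is the expression associated with the cut sequence $p_i\kle n_1\kle\dots\kle n_k$, hence $\norm{b_i}_V\kleq\norm{z_i}_{Z^V}\keq 1$. For the lower bound, write the maximizing expression as $c_1 v_{n_0}\kplus w$ with $w\keq\sum_{j\geq 2}c_j v_{n_{j-1}}$ supported in $\{n_1,\dots,n_{k-1}\}\ksubset (p_i,q_i]$. Then $b_i\keq c_1 v_{p_i}\kplus w$, and $p_i\kle n_1$ ensures that the supports of $c_1 v_{p_i}$ and $w$ are disjoint. By $1$-unconditionality of $(v_i)$,
\[
\norm{b_i}_V\kgeq c_1\quad\text{and}\quad\norm{b_i}_V\kgeq\norm{w}_V\ .
\]
The triangle inequality gives $1\keq\norm{c_1 v_{n_0}\kplus w}_V\kleq c_1\kplus\norm{w}_V$, hence $\norm{w}_V\kgeq 1\kminus c_1$, and combining yields $\norm{b_i}_V\kgeq\max(c_1,1\kminus c_1)\kgeq 1/2$.

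For the $1$-domination, observe that replacing the last cut $n_k$ in the cut sequence for $b_i$ by any value $\kgeq q_i\kplus 1$ leaves $b_i$ unchanged, because $z_i$ is supported in $[p_i,q_i]$ and the final basis vector $v_{n_{k-1}}$ is unaffected. So for each $i$ take this last cut to be $p_{i\kplus 1}$, which coincides with the first cut of the sequence for $z_{i\kplus 1}$; concatenating yields a single strictly increasing cut sequence. Given $(a_j)\kin\coo$ and $z\keq\sum_j a_jz_j$, every interval of the combined cut sequence is contained in some $[p_i,p_{i\kplus 1})$, where only $z_i$ is supported; hence the contribution to the $Z^V$-expression from the $i$th block equals $\abs{a_i}b_i$, and
\[
\Bignorm{\sum_j a_jz_j}_{Z^V}\kgeq\Bignorm{\sum_i\abs{a_i}b_i}_V\keq\Bignorm{\sum_i a_ib_i}_V\ ,
\]
with the last equality following from $1$-unconditionality of $(v_i)$ and the pairwise disjointness of $\supp(b_i)\ksubset[p_i,q_i]$. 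The delicate point is the modification at $p_i$: swapping $v_{n_0}$ for $v_{p_i}$ changes the $V$-norm unpredictably in the absence of any left- or right-dominance hypothesis on $(v_i)$, and the argument side-steps this by settling for the factor $1/2$ via $1$-unconditionality and the triangle inequality.
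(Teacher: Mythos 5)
Your proof is correct and follows essentially the same route as the paper's: normalize the maximizing cut sequence so that its first cut sits at $\min\supp_E(z_i)$, use the triangle inequality together with $1$-unconditionality to lose only a factor $2$ (your $\max(c_1,1\kminus c_1)\geq 1/2$ is exactly the paper's bound $\norm{z_i}_{Z^V}\leq 2\norm{b_i}_V$), and then concatenate the cut sequences of the individual $z_i$ to read off the $1$-domination. The only point you gloss over is the right-endpoint normalization $q_i\kle n_k$, which requires the same ``insert a cut at a fresh position'' argument you used at the left endpoint (append a cut at $q_i\kplus 1$ if $n_k\kleq q_i$); the paper absorbs both normalizations into a single ``without loss of generality''.
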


\begin{proof}
  Let $z\kin S_{Z^V}$ have finite support with respect to
  $(E_i)$. Choose $k,\ 1\kleq n_0\kle n_1\kle\dots\kle n_k$ in $\bn$
  such that
  \[
  \norm{z}_{Z^V}=\Bignorm{\sum_{j=1}^k\norm{P^E_{[n_{j-1},n_j)}(z)}_Z\kcdot
    v_{n_{j-1}}}_V\ .
  \]
  Without loss of generality we can assume that $n_0\kleq
  \min\supp_E(z)\kleq n_1\kminus 1$ and that
  $n_{k-1}\kleq\max\supp_E(z)\keq n_k\kminus 1$. Set
  $m_0\keq\min\supp_E(z),\ m_j\keq n_j$ for $1\kleq j\kle k$, and let
  $m_k\kge\max\supp_E(z)$. By the triangle-inequality we have
  \begin{align*}
    \norm{z}_{Z^V} &\leq \bignorm{\norm{P^E_{[n_0,n_1)}(z)}_Z\kcdot
	v_{n_0}}_V + \Bignorm{\sum_{j=2}^k
	\norm{P^E_{[n_{j-1},n_j)}(z)}_Z\kcdot v_{n_{j-1}}}_V\\
    &\leq 2 \Bignorm{\sum_{j=1}^k\norm{P^E_{[m_{j-1},m_j)}(z)}_Z\kcdot
	    v_{m_{j-1}}}_V\ .
  \end{align*}
  Now let $(z_i)$ be a normalized block sequence of $(E_n)$ in
  $Z^V$. It follows from the above that there exist positive integers
  $1\kleq n_0\kle n_1\kle\dots$ and $1\keq k_1\kle k_2\kle\dots$ such
  that $n_{k_\ell-1}\keq\min\supp_E(z_\ell),\ n_{k_{\ell+1}-2}\kleq
  \max\supp_E(z_\ell)$, and
  \[
  \norm{z_\ell}_{Z^V}\leq 2 \Bignorm{\sum_{j=k_\ell}^{k_{\ell+1}-1}
    \norm{P^E_{[n_{j-1},n_j)}(z_\ell)}_Z\kcdot v_{n_{j-1}}}_V
  \qquad\text{for all }\ell\kin\bn\ .
  \]
  It follows that
  \[
  b_\ell=\sum_{j=k_\ell}^{k_{\ell+1}-1}
  \norm{P^E_{[n_{j-1},n_j)}(z_\ell)}_Z\kcdot v_{n_{j-1}}
  \]
  satisfies $1/2\kleq\norm{b_\ell}_V\kleq 1$ and
  $\ran(b_\ell)\ksubset\ran_E(z_\ell)$ for all
  $\ell\kin\bn$. Moreover, given $(a_i)\kin\coo$, setting
  $z\keq\sum{a_iz_i}$ we have
  \begin{align*}
    \norm{z}_{Z^V} &\geq \Bignorm{\sum_{j=1}^\infty
    \norm{P^E_{[n_{j-1},n_j)}(z)}_Z\kcdot v_{n_{j-1}}}_V \\
    &= \Bignorm{\sum_{\ell=1}^\infty \sum_{j=k_\ell}^{k_{\ell+1}-1} 
    \norm{P^E_{[n_{j-1},n_j)}(z)}_Z\kcdot v_{n_{j-1}}}_V \\
    &= \Bignorm{\sum_{\ell=1}^\infty a_\ell b_\ell}_V\ .
  \end{align*}
\end{proof}

\begin{cor}
  \label{cor:bdd-complete-ZV}
  Let $V$ be a Banach space with a normalized and $1$-unconditional
  basis $(v_i)$, and let $Z$ be a space with an FDD $E\keq(E_i)$.

  If the basis $(v_i)$ is boundedly complete, then $(E_i)$ is a
  boundedly complete FDD for $Z^V(E)$.
\end{cor}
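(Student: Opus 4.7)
The plan is to prove bounded completeness by contradiction, using Lemma~\ref{lem:bs-est-in-ZV} to transfer a divergent bounded-norm series in $Z^V$ into one in $V$, where bounded completeness of $(v_i)$ will deliver the contradiction. Concretely, suppose $(z_n)$ is given with $z_n\kin E_n$ and $M\keq \sup_N\bignorm{\sum_{n=1}^N z_n}_{Z^V}\kle\infty$. If the series $\sum z_n$ failed to converge in $Z^V$, then a Cauchy-failure argument would produce $\ve\kge 0$ and indices $n_0\kle n_1\kle n_2\kle\dots$ so that the packets $y_k\keq\sum_{n=n_{k-1}+1}^{n_k}z_n$ satisfy $\norm{y_k}_{Z^V}\kgeq\ve$, while $(y_k)$ is a block sequence of $(E_i)$ in $Z^V$ with $\bignorm{\sum_{k=1}^N y_k}_{Z^V}\kleq 2M$ for all $N\kin\bn$, by the triangle inequality applied to $\sum_{k=1}^N y_k\keq S_{n_N}\kminus S_{n_0}$.

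The main step is then to apply Lemma~\ref{lem:bs-est-in-ZV} to the normalized block sequence $(y_k/\norm{y_k}_{Z^V})$ to obtain a block sequence $(b_k)$ of $(v_n)$ with $\norm{b_k}_V\kgeq 1/2$ which is $1$-dominated by $(y_k/\norm{y_k}_{Z^V})$. Specialising the domination inequality to the coefficients $a_k\keq\norm{y_k}_{Z^V}$ (truncated at $N$) will yield
\[
\Bignorm{\sum_{k=1}^N \norm{y_k}_{Z^V}\kcdot b_k}_V \leq \Bignorm{\sum_{k=1}^N y_k}_{Z^V}\leq 2M\qquad\text{for all }N\kin\bn\ .
\]
Since $(b_k)$ is a block basis of the $1$-unconditional, boundedly complete basis $(v_i)$, it is itself boundedly complete: boundedness of the partial sums $\sum_{k\leq N}c_k b_k$ implies, by $1$-unconditionality, boundedness of the partial sums of the associated expansion in $(v_i)$, to which the bounded completeness of $(v_i)$ can then be applied. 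Hence $\sum_k \norm{y_k}_{Z^V}\kcdot b_k$ converges in $V$, forcing $\norm{y_k}_{Z^V}\kcdot\norm{b_k}_V\to 0$; the lower bound $\norm{b_k}_V\kgeq 1/2$ then yields $\norm{y_k}_{Z^V}\to 0$, contradicting $\norm{y_k}_{Z^V}\kgeq\ve$.

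I do not anticipate a substantive obstacle: the serious content has been packaged into Lemma~\ref{lem:bs-est-in-ZV}, which produces the dominated block basis $(b_k)$ with $\norm{b_k}_V\kgeq 1/2$, and the only other ingredient is the standard fact that a block basis of a $1$-unconditional boundedly complete basis remains boundedly complete. Together these immediately funnel the hypothesized failure of convergence in $Z^V$ into a failure of bounded completeness of $(v_i)$ in $V$.
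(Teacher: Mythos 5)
Your argument is correct and follows essentially the same route as the paper: both reduce the claim, via Lemma~\ref{lem:bs-est-in-ZV}, to the bounded completeness of the block basis $(b_k)$ of $(v_i)$. The only difference is presentational --- you run the contrapositive explicitly by building the packets $y_k$, while the paper directly asserts that $\sup_n\bignorm{\sum_{i=1}^n a_iz_i}_{Z^V}=\infty$ for any normalized block sequence $(z_i)$ and any scalars with $\abs{a_i}\kgeq\ve$, which is the same content.
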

\begin{proof}
  Let $(z_i)$ be a normalized block sequence of $(E_n)$ in $Z^V$. Let
  $(b_i)$ be a block sequence of $(v_n)$ given by
  Lemma~\ref{lem:bs-est-in-ZV}. Given $\ve\kge 0$, let $(a_i)$ be a
  scalar sequence with $\abs{a_i}\kge \ve$ for all $i\kin\bn$. Since
  $(v_i)$ is boundedly complete, and since $(z_i)$ dominates $(b_i)$
  it follows that
  \[
  \sup_n\Bignorm{\sum_{i=1}^n a_iz_i}_{Z^V} =\infty\ .
  \]
  Hence $(E_i)$ is a boundedly complete FDD for $Z^V(E)$.
\end{proof}

\begin{lem}
  \label{lem:shrinking-ZV}
  Let $V$ be a Banach space with a normalized and $1$-unconditional
  basis $(v_i)$, and assume that the space $Z$ has an FDD $E\keq
  (E_i)$.

  If the basis $(v_i)$ is shrinking and if $(E_i)$ is a shrinking FDD
  for $Z$ then $(E_i)$ is also a shrinking FDD for $Z^V(E)$.
\end{lem}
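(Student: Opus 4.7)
The plan is to use the characterization that an FDD is shrinking iff every normalized block sequence is weakly null. Given a normalized block sequence $(z_i)$ of $(E_i)$ in $Z^V(E)$, taking $k=1$ in the definition of $\|\cdot\|_{Z^V}$ (with $n_0 \leq \min\supp_E(z)$ and $n_1 > \max\supp_E(z)$) yields $\|z\|_Z \leq \|z\|_{Z^V}$ on $\coo(\oplus E_i)$; hence $(z_i)$ is a bounded block sequence of $(E_i)$ in $Z$, and by the shrinking of $(E_i)$ in $Z$, it is weakly null in $Z$. By Lemma~\ref{lem:bs-est-in-ZV}, $(z_i)$ $1$-dominates a block sequence $(b_i)$ of $(v_n)$ with $\tfrac12 \leq \|b_i\|_V \leq 1$ and $\ran(b_i) \ksubset \ran_E(z_i)$; since $(v_i)$ is shrinking, $(b_i)$ is weakly null in $V$.

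Suppose for contradiction that $(z_i)$ is not weakly null in $Z^V$. After a subsequence, $z^*(z_i) \geq \delta > 0$ for some $z^* \kin S_{(Z^V)^*}$. The pair $((z_i, b_i))_i$ is weakly null in $Z \oplus_\infty V$, so iterating Mazur's theorem---by choosing finitely many disjoint convex combinations of the pairs that are each small in $Z \oplus V$-norm and then averaging them---produces, for each $\eta > 0$, a convex combination $y \keq \sum c_i z_i$ with $c_i \geq 0$, $\sum c_i \keq 1$, $\max_i c_i \leq \eta$, $\|y\|_Z \leq \eta$, and $\|\sum c_i b_i\|_V \leq \eta$. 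Since $\|y\|_{Z^V} \geq z^*(y) \geq \delta$, establishing $\|y\|_{Z^V} \leq C\eta$ for some constant $C$ independent of $\eta$ will contradict this for $\eta$ small.

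To estimate $\|y\|_{Z^V}$, fix any admissible partition $1 \leq n_0 < n_1 < \dots < n_k$ with intervals $I_j = [n_{j-1}, n_j)$. Classify each $I_j$ as \emph{thick} if $\ran_E(z_{i_0}) \ksubset I_j$ for some $i_0$, and \emph{thin} otherwise. A thin interval touches the ranges of at most two distinct $z_i$'s partially, so $\|P^E_{I_j}(y)\|_Z \leq 2K \max_i c_i \leq 2K\eta$. A thick interval decomposes $P^E_{I_j}(y)$ into a ``bulk'' portion $P^E_{[m_{l+1}, M_{r-1}]}(y)$ of $Z$-norm at most $K\|y\|_Z \leq K\eta$, plus two partial boundary pieces of $Z$-norm at most $2K\eta$ each; thus each coefficient $\alpha_j \keq \|P^E_{I_j}(y)\|_Z$ is bounded by $3K\eta$.

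The principal obstacle is that these uniform coefficient bounds alone do not control $\|\sum_j \alpha_j v_{n_{j-1}}\|_V$, since the number of intervals $k$ can be arbitrary and $\|\sum_j v_{n_{j-1}}\|_V$ need not be bounded. To close the argument, I would invoke the $V$-estimate $\|\sum c_i b_i\|_V \leq \eta$ together with the $1$-unconditionality of $(v_i)$ and decompose the partition-indexed $V$-sum into a ``bulk'' contribution (arising from the fully-contained blocks in thick intervals) that is dominated by $\|\sum c_i b_i\|_V$ via the support-alignment provided by Lemma~\ref{lem:bs-est-in-ZV}, and a ``boundary'' contribution that is controlled by $\max_i c_i$ times a uniformly bounded $V$-vector coming from the partition points. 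The delicate combinatorial step here, made more difficult by the absence of any block-stability assumption on $(v_i)$, is the main technical difficulty of the proof.
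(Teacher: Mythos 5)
Your reduction to showing that bounded block sequences of $(E_i)$ in $Z^V$ are weakly null is fine, as are the preliminary observations ($\norm{\cdot}_Z\kleq\norm{\cdot}_{Z^V}$ on $\coo(\oplus E_i)$, weak nullity of $(z_i)$ in $Z$ and of $(b_i)$ in $V$). But the proof is not complete: the step you defer as ``the main technical difficulty'' is the entire content of the lemma, and the repair you sketch does not work. First, the ``boundary'' contribution is not controlled by $\max_i c_i$ times a uniformly bounded vector of $V$: a single admissible partition may cut \emph{each} $\ran_E(z_i)$ into many pieces, and the sub-sum $\bignorm{\sum_{j}\norm{P^E_{I_j}(c_iz_i)}_Z\kcdot v_{n_{j-1}}}_V$ over the intervals inside $\ran_E(z_i)$ is only bounded by $c_i\norm{z_i}_{Z^V}\keq c_i$; the triangle inequality over $i$ then yields $\sum_ic_i\keq 1$ rather than $O(\eta)$, and for a general shrinking $1$-unconditional $(v_i)$ there is no visible gain over this (for $V\keq\ell_2$ one can win because squares of disjoint blocks add, but nothing of the sort is assumed). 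Second, the ``bulk'' term cannot be dominated by $\norm{\sum_ic_ib_i}_V$: replacing each block $\sum_{i\in S_j}c_ib_i$ of $(v_n)$ by a multiple of the single vector $v_{n_{j-1}}$ is precisely a block-stability property, which is \emph{not} a hypothesis of this lemma; moreover Lemma~\ref{lem:bs-est-in-ZV} gives $\norm{\sum_ic_ib_i}_V\kleq\norm{y}_{Z^V}$, i.e., a \emph{lower} bound for the very quantity you are trying to bound from above, so its smallness carries no upper information.

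The paper sidesteps all of this by working in the dual. It exhibits an explicit norming subset $K\keq K_1\cup K_2$ of $B_{(Z^V)^*}$ consisting of functionals $\sum_ia_iz^*_i$ with $z^*_i\kin\bigoplus_{i\in[n_{i-1},n_i)}E^*_i$, $\norm{z^*_i}_{Z^*}\kleq1$ and $\bignorm{\sum_ia_iv^*_{n_{i-1}}}_{V^*}\kleq1$, proves that $K$ is $w^*$-compact (this is where the shrinking hypothesis on $(v_i)$ enters, via the bounded completeness of $(v^*_i)$ in $V^*$), and then checks that any bounded block sequence of $(E_n)$ in $Z^V$ tends to zero pointwise on $K$ --- on $K_1$ because the tail of a norm-convergent series in $V^*$ tends to zero, on $K_2$ because $(E_i)$ is shrinking in $Z$. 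Viewing $Z^V\ksubset C(K)$, pointwise-null plus bounded gives weakly null. To complete your argument you would need to pass to some such dual/compactness device rather than a direct norm estimate of convex combinations.
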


\begin{proof} 
  Without loss of generality we may assume that $(E_i)$ is bimonotone
  in $Z$.
  We first note that, given positive integers $1\kleq n_0\kle
  n_1\kle\dots$ and vectors $z^*_j\kin\bigoplus_{i\in [n_{j-1},n_j)}
  E^*_i$ with $\norm{z^*_j}_{Z^*}\kleq 1$ for each $j\kin\bn$, if
  $v^*\keq\sum_{i=1}^\infty a_i v^*_{n_{i-1}}$ converges in $V^*$ with
  $\norm{v^*}\kleq 1$, then the series $z^*\keq \sum_{i=1}^\infty a_i
  z^*_i$ converges in $(Z^V)^*$ and $\norm{z^*}_{(Z^V)^*}\kleq
  1$. Indeed, for $p\kleq q$ in $\bn$ there exists $z\kin S_{Z^V}$
  with $\supp_E(z)\ksubset[n_{p-1},n_q)$ such that
  \begin{align}
    \label{eq:shrinking-ZV;cauchy}
    \Bignorm{\sum_{j=p}^q a_j z^*_j}_{(Z^V)^*}&=\sum_{j=p}^q a_j
    z^*_j(z) \leq\sum_{j=p}^q \abs{a_j}\kcdot
    \norm{P^E_{[n_{j-1},n_j)}(z)}_Z\\
    &\leq \Bignorm{\sum_{j=p}^q a_j v^*_{n_{j-1}}}_{V^*} \cdot
    \Bignorm{\sum_{j=p}^q\norm{P^E_{[n_{j-1},n_j)}(z)}_Z\kcdot
      v_{n_{j-1}}}_{V}\notag \\
    &\leq \Bignorm{\sum_{j=p}^q a_j v^*_{n_{j-1}}}_{V^*} \cdot
    \norm{z}_{Z^V}\leq \Bignorm{\sum_{j=p}^q a_j v^*_{n_{j-1}}}_{V^*}\
    ,\notag
  \end{align}
  which implies the claim.
  Next define $K$ to be the union of the following two sets $K_1$ and
  $K_2$:
  \begin{multline*}
    K_1 = \bigg\{ \sum_{i=1}^\infty a_iz^*_i :\,1\kleq n_0\kle
    n_1\kle\dots,\quad z^*_j\kin\bigoplus_{i\in [n_{j-1},n_j)}
    E^*_i\text{ and } \norm{z^*_j}_{Z^*}\kleq 1\ \V j\kin\bn,\\
    \Bignorm{\sum_{i=1}^\infty a_iv^*_{n_{i-1}}}_{V^*}\kleq 1\bigg\}\
    ,
  \end{multline*}
  \begin{multline*}
    K_2=\bigg\{ \sum_{i=1}^\ell a_iz^*_i : \ell\kin\bn,\ 1\kleq
    n_0\kle n_1\kle\dots\kle n_\ell\kleq\infty,\quad
    z^*_j\kin\bigoplus_{i\in [n_{j-1},n_j)} E^*_i\text{ and}\\
    \norm{z^*_j}_{Z^*}\kleq 1\text{ for }1\kleq j\kleq\ell,\
    \Bignorm{\sum_{i=1}^\ell a_iv^*_{n_{i-1}}}_{V^*}\kleq 1\bigg\}\ .
  \end{multline*}
  An element $\sum_{i=1}^\ell a_iz^*_i$ of $K_2$ will also be written
  as an infinite sum $\sum_{i=1}^\infty a_iz^*_i$ by setting $a_i\keq
  0$ and $z^*_i\keq 0$ for all $i\kge\ell$. Clearly, $K$ is a
  $Z^V$-norming subset (isometrically) of $B_{(Z^V)^*}$. We claim that
  $K$ is $w^*$-compact. Indeed, for each $k\kin\bn$ let
  $y^*_k\keq\sum_{i=1}^\infty a^{(k)}_i z^*_{(k,i)}\kin K$, where for
  some (finite or infinite) sequence $1\kleq n^{(k)}_0\kle
  n^{(k)}_1\kle n^{(k)}_2\kle\dots$ in $\bn\cup\{\infty\}$ we have
  $z^*_{(k,j)}\kin\bigoplus_{i\in [n^{(k)}_{j-1},n^{(k)}_j)} E^*_i$
  and $\norm{z^*_{(k,j)}}_{Z^*}\kleq 1$ for all~$j$, and
  $\Bignorm{\sum_i a^{(k)}_iv^*_{n^{(k)}_{i-1}}}_{V^*}\kleq 1$. After
  passing to a subsequence we can assume that
  \[
  a_i=\lim _{k\to\infty}a^{(k)}_i\qquad\text{exists for all
    $i\kin\bn$, and}
  \]
  there exists $\ell\kin\bn\cup\{0,\infty\}$ such that
  \[
  \begin{array}{rl@{\qquad}l}\ds
    \lim _{k\to\infty} n^{(k)}_j &=n_j &\text{exists for }0\kleq
    j\kle\ell\ ,\\[1.5ex] \ds
    \lim_{k\to\infty}n^{(k)}_\ell &=\infty &\text{if }\ell\kin\bn\cup
    \{0\}\ ,\\[1.5ex] \ds
    \lim_{k\to\infty} z^*_{(k,j)} &=z^*_j &\text{exists (in norm)
      for }0\kleq j\kle \ell\ ,\\[1.5ex] \ds
    w^*\text{-}\lim_{k\to\infty}z^*_{(k,\ell)} &=z^*_\ell
    &\text{exists if }\ell\kin\bn\cup\{0\}\text{ , and}\\[1.5ex] \ds
    w^*\text{-}\lim_{k\to\infty} \sum_{i\in\bn,\,i>\ell}
    a^{(k)}_iz^*_{(k,i)} &= 0 &\text{if }\ell\kin\bn\cup \{0\}\ .
  \end{array}
  \]
  Consider the case when $\ell\keq\infty$. We have $1\kleq n_0\kle
  n_1\kle\dots$, $z^*_j\kin\bigoplus_{[n_{j-1},n_j)} E^*_i$ and
  $\norm{z^*_j}_{Z^*}\kleq 1$ for all $j\kin\bn$. Moreover, since
  $(v^*_i)$ is a boundedly complete basis of $V^*$, the series
  $\sum_{i=1}^\infty a_iv^*_{n_{i-1}}$ converges and
  $\Bignorm{\sum_{i=1}^\infty a_iv^*_{n_{i-1}}}_{V^*}\kleq 1$. Hence
  $z^*\keq \sum_{i=1}^\infty a_iz^*_i$ belongs to $K$. Finally, given
  $z\kin S_Z$ with finite support with respect to $(E_i)$, for
  sufficiently large $r\kin\bn$ we have
  \begin{align*}
    \lim_{k\to\infty} y^*_k(z) &=\lim_{k\to\infty} \sum_{i=1}^r
    a^{(k)}_i z^*_{(k,i)}(z)\\
    &= \sum_{i=1}^r a_iz^*_i(z) = z^*(z)\ .
  \end{align*}
  It follows that $y^*_k\stackrel{w^*}{\to}z^*$ as $k\to\infty$.
  The case $\ell\kin\bn\cup\{0\}$ is similar. We have $1\kleq n_0\kle
  n_1\kle\dots\kle n_\ell\kleq\infty,\
  z^*_j\kin\bigoplus_{[n_{j-1},n_j)} E^*_i$ and
  $\norm{z^*_j}_{Z^*}\kleq 1$ for $1\kleq j\kleq\ell$, and
  $\sum_{i=1}^\ell a_i v^*_{n_{i-1}}\kin B_{V^*}$. So
  $z^*\keq\sum_{i=1}^\ell a_iz^*_i\kin K$ and
  $y^*_k\stackrel{w^*}{\to}z^*$ as $k\to\infty$. This completes the
  proof that $K$ is $w^*$-closed.

  We deduce that $Z^V$ is embedded in $C(K)$, the space of continuous
  functions on $K$. Let $(z_i)$ be a bounded block sequence of $(E_n)$
  in $Z^V$, and let $z^*\kin K$. Using the notation as in the
  definition of $K$, if $z^*\kin K_1$, then computing as
  in~\eqref{eq:shrinking-ZV;cauchy}
  \[
  z^*(z_i) = \sum _{j,\ n_j\geq\min\supp_E(z_i)} a_jz^*_j(z_i) \leq
  \norm{z_i}_{Z^V}\cdot \Bignorm{\sum _{j, n_j\geq\min\supp_E(z_i)}
  a_j v^*_{n_{j-1}}}_{V^*}\ ,
  \]
  which converges to zero as $i\to\infty$; and if $z^*\kin K_2$, then
  for all sufficiently large values of~$i$
  \[
  z^*(z_i) =\sum_{j=1}^\ell a_j z^*_j(z_i) = a^{\phtm{*}}_\ell z^*_\ell
  (z_i)\ ,
  \]
  which converges to zero as $i\to\infty$, since $(E_i)$ is assumed a
  shrinking FDD for $Z$.

  It follows that $(z_i)$ is weakly null in $C(K)$, and thus in
  $Z^V$. Since $(z_i)$ was an arbitrary bounded block sequence in
  $Z^V$, this finishes the proof that $(E_i)$ is shrinking in $Z^V$.
\end{proof}

From Lemma~\ref{lem:shrinking-ZV} and
Corollary~\ref{cor:bdd-complete-ZV} we obtain the following result.

\begin{cor}
  \label{cor:refl-ZV}
  Assume that $V$ is a reflexive Banach space with a normalized and
  $1$-unconditional basis $(v_i)$ and that $Z$ is a space with a
  shrinking FDD $E\keq (E_i)$. Then $Z^V(E)$ is reflexive.
\end{cor}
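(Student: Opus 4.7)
The plan is to deduce this corollary directly from the two preceding results, Lemma~\ref{lem:shrinking-ZV} and Corollary~\ref{cor:bdd-complete-ZV}, together with the classical characterization of reflexivity via FDDs (namely, that an FDD of a Banach space is both shrinking and boundedly complete if and only if the space is reflexive, which is the FDD-analogue of James's theorem for bases).

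First, since $V$ is reflexive and $(v_i)$ is a normalized, $1$-unconditional basis of $V$, the basis $(v_i)$ is both shrinking and boundedly complete. This is the standard consequence of James's theorem: an unconditional basis fails to be shrinking exactly when the space contains $\co$, and fails to be boundedly complete exactly when the space contains $\ell_1$, and reflexivity rules out both alternatives. Next, apply Lemma~\ref{lem:shrinking-ZV} with the hypothesis that $(v_i)$ is shrinking and $(E_i)$ is shrinking in $Z$ to conclude that $(E_i)$ is a shrinking FDD for $Z^V(E)$. In parallel, apply Corollary~\ref{cor:bdd-complete-ZV} with the hypothesis that $(v_i)$ is boundedly complete to conclude that $(E_i)$ is a boundedly complete FDD for $Z^V(E)$.

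Finally, invoke the James-type theorem for FDDs: a Banach space with an FDD that is simultaneously shrinking and boundedly complete is reflexive. Hence $Z^V(E)$ is reflexive.

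There is no real obstacle; the corollary is essentially a bookkeeping consequence of the two preceding results. The only minor point to verify is that reflexivity of $V$ does indeed transfer to the two properties (shrinking and boundedly complete) of the unconditional basis $(v_i)$, but this is classical and can simply be cited.
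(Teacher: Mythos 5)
Your proof is correct and follows exactly the route the paper intends: the paper states this corollary as an immediate consequence of Lemma~\ref{lem:shrinking-ZV} and Corollary~\ref{cor:bdd-complete-ZV}, and you supply precisely the missing bookkeeping (reflexivity of $V$ gives $(v_i)$ shrinking and boundedly complete via James, then the FDD version of James's theorem closes the argument).
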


The idea of the norm $\norm{\cdot}_{Z^V}$ is, of course, to introduce
a subsequential $V$-lower-estimate. The next lemma determines when
this is the case.

\begin{lem}
  \label{lem:ZV_has-V-lower-est}
  Let $V$ be a Banach space with a normalized and $1$-unconditional
  basis $(v_i)$, and let $Z$ be a Banach space with an FDD $E\keq
  (E_i)$.

  If, for some $C\kgeq 1$, $(v_i)$ is $C$-block stable, then $(E_i)$
  satisfies subsequential $2C$-$V$-lower estimates in $Z^V(E)$.
\end{lem}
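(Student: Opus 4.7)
My plan is to chain together Lemma~\ref{lem:bs-est-in-ZV} (which already does most of the work) with the $C$-block-stability hypothesis and the $1$-unconditionality of $(v_i)$.

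First, let $(z_i)$ be a normalized block sequence of $(E_n)$ in $Z^V(E)$, and set $m_i\keq\min\supp_E(z_i)$ for each $i\kin\bn$. Applying Lemma~\ref{lem:bs-est-in-ZV} produces a block sequence $(b_i)$ of $(v_n)$ in $V$ satisfying $1/2\kleq\norm{b_i}_V\kleq 1$, $\ran(b_i)\ksubset\ran_E(z_i)$, and
\[
\Bignorm{\sum_i a_i b_i}_V\leq\Bignorm{\sum_i a_i z_i}_{Z^V}\qquad\text{for all }(a_i)\kin\coo\ .
\]
The range condition gives $m_i\kleq\min\supp(b_i)$ and $\max\supp(b_i)\kleq\max\supp_E(z_i)\kle\min\supp_E(z_{i+1})\keq m_{i+1}$, so the two normalized block sequences $(v_{m_i})$ and $\tilde b_i\keq b_i/\norm{b_i}_V$ satisfy the interleaving support condition required by the definition of block-stability.

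Next I invoke $C$-block-stability of $(v_i)$ to get
\[
\Bignorm{\sum_i a_i v_{m_i}}_V\leq C\Bignorm{\sum_i a_i \tilde b_i}_V\qquad\text{for all }(a_i)\kin\coo\ .
\]
To relate $(\tilde b_i)$ back to $(b_i)$, I use $1$-unconditionality of $(v_i)$: writing $b_i\keq\sum_j\beta_{ij}v_j$ (summed over $j\kin\ran(b_i)$), the coefficient of $v_j$ in $\sum_i a_i\tilde b_i$ is $a_i\beta_{ij}/\norm{b_i}_V$, whose absolute value is at most $2\abs{a_i\beta_{ij}}$ since $\norm{b_i}_V\kgeq 1/2$. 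Since the ranges $\ran(b_i)$ are pairwise disjoint, $1$-unconditionality then yields $\bignorm{\sum_i a_i\tilde b_i}_V\kleq 2\bignorm{\sum_i a_i b_i}_V$.

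Chaining the three inequalities gives $\bignorm{\sum_i a_i v_{m_i}}_V\kleq 2C\bignorm{\sum_i a_i z_i}_{Z^V}$, which is the required subsequential $2C$-$V$-lower estimate. There is no real obstacle here; the only mildly delicate point is remembering to renormalize $(b_i)$ before applying block-stability and then paying the factor $2$ to undo that renormalization via unconditionality.
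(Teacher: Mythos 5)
Your proof is correct and follows exactly the same approach as the paper's (which is stated very tersely: apply Lemma~\ref{lem:bs-est-in-ZV} to get $(b_i)$, then note that $1$-unconditionality and $C$-block-stability together imply $(b_i)$ $2C$-dominates $(v_{m_i})$). You merely fill in the details the paper leaves implicit: normalizing $(b_i)$ to apply block-stability to two normalized block bases with interleaved supports, and then using $1$-unconditionality and the lower bound $\norm{b_i}_V\kgeq 1/2$ to undo the normalization at the cost of the factor $2$.
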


\begin{proof}
  Let $(z_i)$ be a normalized block sequence in $Z^V(E)$, and for each
  $i\kin\bn$ let $m_i\keq\min\supp_E(z_i)$. By
  Lemma~\ref{lem:bs-est-in-ZV}, there exists a block sequence $(b_i)$
  of $(v_n)$ with $1/2\kleq\norm{b_i}_V\kleq 1$ and
  $\ran(b_i)\ksubset\ran_E(z_i)$ for all $i\kin\bn$, which is
  $1$-dominated by $(z_i)$. Since $(v_i)$ is $1$-unconditional and
  $C$-block-stable, it follows that $(b_i)$ $2C$-dominates
  $(v_{m_i})$, which proves the lemma.
\end{proof}

The final result in this section shows when subsequential $U$-upper
estimates are preserved under $Z\mapsto Z^V$.

\begin{lem}
  \label{lem:ZV-keeps-U-upper-est}
  Let $V$ and $U$ be  Banach spaces with normalized, $1$-unconditional
  and block-stable bases $(v_i)$ and $(u_i)$, respectively, and assume
  that $(v_i)$ is dominated by $(u_i)$. Let $Z$ be a Banach space with
  an FDD $(E_i)$.

  If $(E_i)$ satisfies subsequential $U$-upper estimates in $Z$, then
  $(E_i)$ also satisfies subsequential $U$-upper estimates in $Z^V$.
\end{lem}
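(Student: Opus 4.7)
The plan is to show that every normalized block sequence $(z_i)$ of $(E_n)$ in $Z^V(E)$ with $m_i\keq\min\supp_E(z_i)$ is uniformly dominated by $(u_{m_i})$ in $Z^V$. Fix $(a_i)\kin\coo$, set $z\keq\sum_i a_iz_i$, and fix a cutting $1\kleq n_0\kle n_1\kle\cdots\kle n_k$ with $I_j\keq[n_{j-1},n_j)$. By definition of $\norm{\cdot}_{Z^V}$ it suffices to bound $\bignorm{\sum_j\norm{P^E_{I_j}(z)}_Zv_{n_{j-1}}}_V$ by a uniform constant multiple of $\bignorm{\sum_i a_iu_{m_i}}_U$. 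Writing $y_j\keq P^E_{I_j}(z)$, $M_j\keq\{i:\supp_E(z_i)\cap I_j\kneq\emptyset\}$ and $J(i)\keq\{j:i\kin M_j\}$, split the indices into \emph{clean} ones $A\keq\{i:|J(i)|\keq 1\}$ (supported inside a single $I_j$) and \emph{straddlers} $B\keq\{i:|J(i)|\kgeq 2\}$; decompose $y_j\keq y_j^A\kplus y_j^B$ accordingly, so that lattice monotonicity together with the triangle inequality in $V$ reduces matters to bounding the $A$- and $B$-contributions separately.

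For the clean contribution, $(z_i)_{i\in M_j\cap A}$ is a block sequence in $Z$ with $\norm{z_i}_Z\kleq\norm{z_i}_{Z^V}\keq 1$ (apply the one-interval cutting $n_0\keq 1,\ n_1\kge\max\supp_E(z_i)$ in the $Z^V$-norm definition). The assumed $C_0$-$U$-upper estimate on $Z$ then yields $\norm{y_j^A}_Z\kleq C_0\bignorm{\sum_{i\in M_j\cap A}a_iu_{m_i}}_U$. Passing from the outer $V$-norm to a $U$-norm via the $D$-domination of $(v_i)$ by $(u_i)$, and applying the $B_U$-block-stability of $(u_i)$ (the blocks $W_j^A\keq\sum_{i\in M_j\cap A}a_iu_{m_i}$ have supports in disjoint $I_j$'s and interlace strictly with the singletons $(u_{n_{j-1}})$), collapses the sum to $\bignorm{\sum_{i\in A}a_iu_{m_i}}_U$, giving the clean bound $C_0DB_U\bignorm{\sum_i a_iu_{m_i}}_U$.

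For the straddling contribution, the triangle inequality in $Z$ followed by reindexing over pairs $(i,j)$ with $i\kin B$, $j\kin J(i)$ reduces matters to bounding $\bignorm{\sum_{i\in B}|a_i|\omega_i}_V$, where $\omega_i\keq\sum_{j\in J(i)}\norm{P^E_{I_j}(z_i)}_Zv_{n_{j-1}}$. The critical input is $\norm{\omega_i}_V\kleq\norm{z_i}_{Z^V}\keq 1$, obtained by evaluating $\norm{\cdot}_{Z^V}$ on $z_i$ along the sub-cutting indexed by $J(i)$. The decisive combinatorial observation is that for $i\kle i'$ both in $B$, the supports of $\omega_i$ and $\omega_{i'}$ overlap in at most one point, and only when $i,i'$ are consecutive in the enumeration of $B$; indeed, any index strictly between them in $\bn$ would have to fit inside a shared $I_j$ and hence be clean, contradicting that the intermediate straddler lies in $B$. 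Enumerating $B\keq\{i_1\kle i_2\kle\cdots\}$ and splitting by parity of $s$, the normalized odd-indexed sequence $(\omega_{i_s}/\norm{\omega_{i_s}}_V)$ is a genuine block sequence in $V$ that interlaces strictly with $(v_{m_{i_s}})$, so $B_V$-block-stability gives $\bignorm{\sum_{s\,\mathrm{odd}}|a_{i_s}|\omega_{i_s}}_V\kleq B_V\bignorm{\sum_{s\,\mathrm{odd}}|a_{i_s}|v_{m_{i_s}}}_V$; the even indices are handled identically. A final use of the $D$-domination of $(v_i)$ by $(u_i)$ converts the right-hand side to produce the straddling bound $2B_VD\bignorm{\sum_i a_iu_{m_i}}_U$. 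Combining with the clean bound gives the required estimate with constant $(C_0B_U\kplus 2B_V)D$.

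The main difficulty will be the straddling combinatorics — carefully establishing the one-point-overlap property for the $\omega_i$'s and verifying the exact interlacing hypothesis required by $B_V$-block-stability after the odd/even split of $B$. The remaining steps (the lattice-monotonicity split, triangle inequality, applying the $U$-upper estimate on $Z$ for the clean part, and invoking $B_U$-block-stability) follow routinely from the assumed structure and the definition of $\norm{\cdot}_{Z^V}$.
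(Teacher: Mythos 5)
Your argument is correct, and it reorganizes the paper's proof in a genuinely different way. The paper fixes the cutting $n_0<\dots<n_k$ and splits the sum three ways: intervals $[n_{j-1},n_j)$ lying inside a single block's territory (handled by block-stability of $(v_i)$ and the bound $\norm{b_i}_V\leq\norm{z_i}_{Z^V}$), blocks $z_i$ lying inside a single interval (handled, as in your ``clean'' case, by the $U$-upper estimate in $Z$ plus block-stability of $(u_i)$), and the boundary cross-terms where an interval straddles an endpoint of some $\supp_E(z_i)$; for these last terms the paper estimates $\norm{P^E_{[n_{j-1},n_j)}(z_i)}_Z\leq K\norm{z_i}_Z$ and pays for it with the projection constant $K$, which appears in the final constant $B_VD+B_UCD+2B_VDK$. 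You instead classify only the blocks (clean versus straddling) and absorb all of a straddler's contribution into the single vector $\omega_i$, whose $V$-norm you control by $\norm{z_i}_{Z^V}$ directly from the definition of the $Z^V$-norm (choosing the sub-cutting indexed by $J(i)$); the price is the combinatorial lemma that distinct straddlers' $J(i)$'s meet in at most one index, and only for consecutive straddlers, so that an odd/even split makes the $\omega_i$'s genuine interlacing blocks. I checked this lemma and the interlacing needed for $B_V$-block-stability after the parity split; both hold. What your route buys is a constant $(C_0B_U+2B_V)D$ that is independent of the projection constant of $(E_i)$ in $Z$, at the cost of the extra support combinatorics; the paper's route is combinatorially lighter but coarser in the boundary terms.
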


\begin{proof}
  Choose constants $B_V, B_U, D$ and $C$ in $[1,\infty)$ such that
  $(v_i)$ is $B_V$-block-stable, $(u_i)$ is $B_U$-block-stable,
  $(v_i)$ is $D$-dominated by $(u_i)$, and $(E_i)$  satisfies
  subsequential $C$-$U$-upper estimates in $Z$. Let $K$ be the
  projection constant of $(E_i)$ in $Z$, and set $\Cb\keq B_VD\kplus
  B_UCD\kplus 2B_VDK$. We show that for any finite block sequence
  $(z_i)_{i=1}^\ell$ of $(E_n)$, and for any $k$ and $n_0\kle
  n_1\kle\dots\kle n_k$ in $\bn$ we have (putting
  $z\keq\sum_{i=1}^\ell z_i$ and $m_j\keq\min\supp_E(z_j)$ for $1\kleq
  j\kleq \ell$)
  \begin{equation}
    \label{eq:ZV-keeps-U-upper-est;aim}
    \Bignorm{\sum_{j=1}^k \norm{P^E_{[n_{j-1},n_j)}(z)}_Z\kcdot
      v_{n_{j-1}}}_V\leq \Cb\cdot \Bignorm{\sum_{i=1}^\ell
	\norm{z_i}_{Z^V}\kcdot u_{m_i}}_U\ .
  \end{equation}
  Taking then the supremum of the left side
  of~\eqref{eq:ZV-keeps-U-upper-est;aim} over all choices of $k$ and $n_0\kle
  n_1\kle\dots\kle n_k$ in $\bn$, we obtain
  \[
  \Bignorm{\sum_{i=1}^\ell z_i}_{Z^V}\leq \Cb\cdot
  \Bignorm{\sum_{i=1}^\ell \norm{z_i}_{Z^V}\kcdot u_{m_i}}_U\ ,
  \]
  and thus that $(E_i)$ satisfies subsequential $\Cb$-$U$-upper
  estimates in $Z^V$. Note that in
  proving~\eqref{eq:ZV-keeps-U-upper-est;aim} we can of course assume
  that $n_k\kleq\max\supp_E(z_\ell)\kplus 1$.

  For $i\keq 1,2,\dots,\ell$ put
  \[
  J_i=\big\{j\kin\{1,2,\dots,k\}:\,\min\supp_E(z_i)\kleq
  n_{j-1}<n_j\kleq \min\supp_E(z_{i+1})\big\}
  \]
  (with $\min\supp_E(z_{\ell+1})\keq\max\supp_E(z_\ell)\kplus 1$) and
  $J_0\keq \{1,2,\dots,k\}\ksetminus\bigcup_{i=1}^\ell J_i$.

  For  $j\keq 1,2,\dots, k $ put
  \[
  I_j =\big\{i\kin\{1,2,\dots,\ell\}:\, n_{j-1}\kleq\min\supp_E(z_i)
  \leq\max\supp_E(z_i)\kle n_j\big\}
  \]
  and $I_0\keq \{1,2,\dots,\ell\}\ksetminus\bigcup_{j=1}^k I_j$.

  Firstly, we have
  \begin{align}
    \label{eq:ZV-keeps-U-upper-est;1}
    \Bignorm{\sum_{i=1}^\ell \sum_{j\in J_i}
      \norm{P^E_{[n_{j-1},n_j)} (z_i)}_Z\kcdot v_{n_{j-1}}}_V
      &=\Bignorm{\sum_{i=1}^\ell b_i}_V\\
      \intertext{\big(where $b_i\keq\sum_{j\in J_i}
	\norm{P^E_{[n_{j-1},n_j)} (z_i)}_Z\kcdot v_{n_{j-1}}$ for
	$1\kleq i\kleq\ell$\big)}
      &\leq B_V\Bignorm{\sum_{i=1}^\ell \norm{b_i}_V\kcdot
	v_{m_i}}_V \notag\\
      &\leq B_VD \Bignorm{\sum_{i=1}^\ell \norm{z_i}_{Z^V}\kcdot
	u_{m_i}}_U\ . \notag
  \end{align}
  Secondly,
  \begin{align}
    \label{eq:ZV-keeps-U-upper-est;2}
    \biggnorm{\sum_{j\in J_0} \Bignorm{\sum_{i\in I_j} z_i}_Z\kcdot
      v_{n_{j-1}}}_V &\leq C \Bignorm{\sum_{j\in
	J_0}\norm{b_j}_U\kcdot v_{n_{j-1}}}_V\\
    \intertext{(where $b_j\keq \sum_{i\in I_j}\norm{z_i}_Z\kcdot
      u_{m_i}$ for each $j\in J_0$)}
    \leq& CD \Bignorm{\sum_{j\in J_0}\norm{b_j}_U \kcdot
      u_{n_{j-1}}}_U
    \notag\\
    \leq& CDB_U \Bignorm{\sum_{j\in J_0} b_j}_U \leq CDB_U
    \Bignorm{\sum_{i=1}^\ell \norm{z_i}_{Z^V}\kcdot u_{m_i}}_U\
    .\notag
  \end{align}
  Thirdly, given $j\kin J_0$ and $i\kin I_0$ such that
  $P^E_{[n_{j-1},n_j)}(z_i)\kneq 0$, we have either
  \begin{equation}
    \label{eq:ZV-keeps-U-upper-est;left}
    n_{j-1}<\min\supp_E(z_i)<n_j\leq \max\supp_E(z_i)\
    ,\qquad\text{or}
  \end{equation}
  \begin{equation}
    \label{eq:ZV-keeps-U-upper-est;right}
    \min\supp_E(z_i)<n_{j-1}\leq \max\supp_E(z_i)<n_j\ .
  \end{equation}
  Let $J_{0,1}$ be the set of all $j\kin J_0$ for which there exists
  an $i\kin I_0$ such that~\eqref{eq:ZV-keeps-U-upper-est;left} holds
  and let $i^1_j$ denote the unique such $i\kin I_0$. Similarly, we
  let  $J_{0,2}$ be the set of all $j\kin J_0$ for which there exists
  an $i\kin I_0$ such that~\eqref{eq:ZV-keeps-U-upper-est;right} holds
  and we denote by $i^2_j$ the unique such $i\kin I_0$. We now obtain
  \begin{align}
    \label{eq:ZV-keeps-U-upper-est;3}
    \Bignorm{\sum_{j\in J_0} \sum_{i\in I_0} \norm{P^E_{[n_{j-1},n_j)}
	&(z_i)}_Z\kcdot v_{n_{j-1}}}_V\\
    &\leq K\Bignorm{\sum_{j\in J_{0,1}} \norm{z_{i^1_j}}_Z\kcdot
      v_{n_{j-1}}}_V + K\Bignorm{\sum_{j\in J_{0,2}}
      \norm{z_{i^2_j}}_Z\kcdot v_{n_{j-1}}}_V\notag\\
    &\leq KB_V \Bignorm{\sum_{j\in J_{0,1}} \norm{z_{i^1_j}}_Z \kcdot
      v_{m_{i^1_j}}}_V + KB_V \Bignorm{\sum_{j\in J_{0,2}}
      \norm{z_{i^2_j}}_Z\kcdot v_{m_{i^2_j}}}_V \notag\\
    &\leq 2KB_VD\Bignorm{\sum_{i=1}^\ell \norm{z_i}_{Z^V}\kcdot
      u_{m_i}}_U\ .\notag
  \end{align}
  Finally, we deduce
  from~\eqref{eq:ZV-keeps-U-upper-est;1},
  \eqref{eq:ZV-keeps-U-upper-est;2}
  and~\eqref{eq:ZV-keeps-U-upper-est;3} that
  \begin{align*}
    \Bignorm{\sum_{j=1}^k&\norm{P^E_{[n_{j-1},n_j)}(z)}_Z\kcdot
      v_{n_{j-1}}}_V\\
    &\leq \Bignorm{\sum_{i=1}^\ell\sum_{j\in J_i}
      \norm{P^E_{[n_{j-1},n_j)}(z_i)}_Z\kcdot v_{n_{j-1}}}_V\\
    &\qquad+\Bignorm{\sum_{j\in J_0}\norm{P^E_{[n_{j-1},n_j)}(z)}_Z
      \kcdot v_{n_{j-1}}}_V\\
    &\leq \Bignorm{\sum_{i=1}^\ell\sum_{j\in J_i}
      \norm{P^E_{[n_{j-1},n_j)}(z_i)}_Z\kcdot v_{n_{j-1}}}_V\\
    &\qquad+\biggnorm{\sum_{j\in J_0}\Bignorm{\sum_{i\in I_j} z_i}_Z
      \kcdot v_{n_{j-1}}}_V\\
    &\qquad+\Bignorm{\sum_{j\in J_0}\sum_{i\in I_0}
      \norm{P^E_{[n_{j-1},n_j)}(z_i)}_Z\kcdot v_{n_{j-1}}}_V\\
    &\leq(B_VD+B_UCD+2B_VDK)\Bignorm{\sum_{i=1}^m
      \norm{z_i}_{Z^V}\kcdot u_{m_i}}_U\ ,
  \end{align*}
  which finishes the proof of~\eqref{eq:ZV-keeps-U-upper-est;aim}.
\end{proof}

\section{Embedding theorems}
\label{section:embedding}

In this section we will prove and deduce some consequences of

\begin{thm}
  \label{thm:embedding}
  Assume that $V$ is a Banach space with a normalized,
  $1$-unconditional and left-dominant basis $(v_i)$. Let $X$ be a
  separable, infinite-dimensional, reflexive space with subsequential
  $V$-lower tree estimates.
  \begin{mylist}{(a)}
  \item[(a)]
    For every  reflexive space $Z$ with an FDD $E\keq(E_i)$ which
    contains $X$ there is a blocking $H\keq (H_i)$ of $(E_i)$, and
    there exists $N\kin\vegtelen{\bn}$ such that $X$ naturally
    isomorphically embeds into $Z^{V_N}(H)$.
  \item[(b)]
    There is a space $\Yt$ with a bimonotone, shrinking FDD $\Gt\keq
    (\Gt_i)$, and there exists $N\kin\vegtelen{\bn}$ such that $X$ is
    a quotient of $\Yt^{V_N}(\Gt)$.
  \end{mylist}
\end{thm}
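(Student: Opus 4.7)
Both parts proceed by translating the tree hypothesis on $X$ into a coordinatized estimate via the combinatorial Proposition~\ref{prop:tree-est-sb-est}, and then reading off the required map from the definition of the $Z^V$-norm. For~(a) the embedding is the formal identity $X \khookrightarrow Z^{V_N}(H)$; for~(b) the quotient arises by first applying a Zippin-type covering of $X$ by a space with shrinking FDD, then performing the same skipped-block construction on the cover.

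\textbf{Part~(a).} Fix $C \kgeq 1$ witnessing the subsequential $C$-$V$-lower tree estimates of $X$ and set
\[
\cA = \bigl\{(k_i,x_i) \kin (\bn \ktimes S_X)^\omega : (v_{k_i}) \text{ is $C$-dominated by }(x_i)\bigr\}.
\]
The hypothesis says that every normalized weakly null even tree in $X$ has a branch in $\cA \ksubset \overline{\cA_\ve}$, which is condition~(a) of Proposition~\ref{prop:tree-est-sb-est}. Its equivalent condition~(b) then yields a blocking $H\keq(H_i)$ of $(E_i)$, a strictly increasing sequence $(K_i)\ksubset\bn$, and a null sequence $\bar\delta\ksubset(0,1)$ such that every $\bar\delta$-skipped block sequence $(x_i)\ksubset S_X$ of $(H_n)$ in $Z$, with skipping integers $r_{i-1} \kle r_i$, satisfies that $(v_{K_{r_{i-1}}})$ is $C'$-dominated by $(x_i)$ for a uniform $C'$. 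Put $N\keq\{K_i\}$. The bound $\norm{x}_Z \kleq \norm{x}_{Z^{V_N}(H)}$ is immediate by taking $k\keq 1$ in the sup defining the $Z^V$-norm. For the reverse, fix $x \kin S_X$ and $1 \kleq n_0 \kle \dots \kle n_k$; the key is to convert the $Z$-valued projections $y_j\keq P^H_{[n_{j-1},n_j)}(x)$ into honest $\bar\delta$-skipped block vectors $x_j \kin S_X$. One splits the index set into odd and even subfamilies, and on each uses reflexivity of $Z$ together with the rapid decay of $\bar\delta$ to produce normalized $x_j \kin S_X$ with $\ran_H(x_j) \ksubset (n_{j-2},n_{j+1})$ and $\norm{x_j \kminus y_j/\norm{y_j}_Z}$ summably small. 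The skipped-block conclusion from Proposition~\ref{prop:tree-est-sb-est} then bounds the resulting $V$-sum by $C' \bignorm{\sum \norm{y_j}_Z x_j}_Z \kleq K\norm{x}_Z$, with the multiplicative loss absorbed via the left-dominance of $(v_i)$.

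\textbf{Part~(b).} Apply Zippin's theorem to obtain a reflexive space $\tilde Y$ with bimonotone shrinking FDD $\tilde G$ and a quotient map $q\colon \tilde Y \twoheadrightarrow X$. Using openness of $q$ and a small-perturbation argument, any normalized weakly null even tree in $\tilde Y$ can be lifted (up to arbitrarily small errors) to one in $X$, so its branches inherit the $C$-$V$-lower estimate on their $q$-images. Applying Proposition~\ref{prop:tree-est-sb-est} with
\[
\cA' = \bigl\{(k_i,y_i) \kin (\bn \ktimes S_{\tilde Y})^\omega : (v_{k_i}) \text{ is $C$-dominated by } (q(y_i))\bigr\}
\]
and ambient space $\tilde Y$ with FDD $\tilde G$ produces, after further blocking of $\tilde G$, an infinite $N$ and a null $\bar\delta$ such that every $\bar\delta$-skipped block sequence $(y_i) \ksubset S_{\tilde Y}$ has $(v_{K_{r_{i-1}}})$ $C'$-dominated by $(q(y_i))$. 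This domination, combined with the definition of the $\tilde Y^{V_N}(\tilde G)$-norm, shows that the formal extension $\tilde q\colon \tilde Y^{V_N}(\tilde G) \to X$ of $q$ is bounded and that $\tilde q\bigl(B_{\tilde Y^{V_N}(\tilde G)}\bigr)$ contains a fixed multiple of $B_X$; the open mapping theorem completes the argument.

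\textbf{Main obstacle.} The technical crux, common to both parts, is converting the block projections of a vector in $Z$ (respectively $\tilde Y$) into honest $\bar\delta$-skipped block sequences in $S_X$ (respectively $S_{\tilde Y}$ with the right $q$-image), since Proposition~\ref{prop:tree-est-sb-est} demands the latter. This requires reflexivity and an odd/even splitting to handle the two-sided perturbation errors at block boundaries, together with left-dominance of $(v_i)$ to absorb the resulting index shifts. For part~(b), an additional delicate point is lifting the tree hypothesis through the surjection $q$; this is why Zippin's theorem, which delivers the right cover with shrinking FDD, is the natural starting point rather than a direct manipulation of $X$.
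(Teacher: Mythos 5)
Your overall strategy --- coordinatize the tree hypothesis via Proposition~\ref{prop:tree-est-sb-est} and then estimate the $Z^{V_N}$-norm using skipped-block sequences --- is indeed the paper's strategy, and your setup of $\cA$ and the passage to $\overline{\cA_\ve}$ in part~(a) is correct. But there is a genuine gap at the technical crux you yourself identify. You claim that the normalized block projections $y_j/\norm{y_j}_Z$ with $y_j\keq P^H_{[n_{j-1},n_j)}(x)$, $x\kin S_X$, can be approximated by vectors $x_j\kin S_X$ with nearly the same range, ``using reflexivity of $Z$ together with the rapid decay of $\bar\delta$.'' This is false for a general subspace $X$ of $Z$: coordinate projections of elements of $X$ need not lie anywhere near $X$, and no odd/even splitting repairs this. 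The correct tool is Proposition~\ref{prop:x=sum-of-sb} (Johnson's decomposition, quoted from~\cite{OS1}): \emph{after} choosing the blocking $H$ according to a sequence $(N_i)$ produced by that proposition, every $x\kin S_X$ decomposes as $x\keq\sum x_i$ with $x_i\kin X$ (not merely in $Z$) and each $x_i$ almost supported in a prescribed window $(t_{i-1},t_i)$ with $t_i$ in designated gaps. One then compares $\norm{P^H_{[n_{j-1},n_j)}(x)}$ with $\norm{x_j}\kplus\norm{x_{j+1}}$ up to summable errors and applies the skipped-block lower estimate to the normalized pieces $\xb_i$ (inserting auxiliary vectors $e_n\kin S_X$ nearly supported in single blocks when some $\norm{x_{i+1}}$ is too small --- another step your sketch omits). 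Since the blocking $H$ must be built \emph{from} Proposition~\ref{prop:x=sum-of-sb}, this is not a detail one can defer; it is the argument.

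Part~(b) has a second, independent gap. Your set $\cA'$ lives over sequences in $S_{\tilde Y}$ with conditions on the images $q(y_i)$, but condition~(a) of Proposition~\ref{prop:tree-est-sb-est} cannot hold for it: a normalized weakly null even tree contained in $\ker q$ has no branch whose $q$-image dominates anything, so the proposition is inapplicable in the ambient space $\tilde Y$. The paper instead keeps two ambient spaces: $X$ is embedded (Zippin) into a reflexive $Z$ with FDD, where Proposition~\ref{prop:tree-est-sb-est} and Proposition~\ref{prop:x=sum-of-sb} are applied to decompose each $x\kin S_X$ into skipped blocks; separately, a quotient map $Q\colon Y\to X$ with $Y$ having a shrinking FDD is obtained by dualizing an embedding of $X^*$ (\cite[Lemma~3.1]{OS1}, not Zippin directly), made almost diagonal by the Johnson--Zippin blocking (Proposition~\ref{prop:jz-blocking}), and the pieces of the decomposition are lifted one at a time through $Q$ with controlled supports (Lemma~\ref{lem:kill-the-overlap}) to produce a preimage $\yt$ of bounded $\Yt^{V_N}$-norm. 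This lifting of the decomposition --- the step your sketch compresses into ``the open mapping theorem completes the argument'' --- is the real content of part~(b) and is absent from your proposal.
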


Recall that $\vegtelen{\bn}$ denotes the set of all infinite subsets
of $\bn$, and if $V$ is a Banach space with a normalized,
$1$-unconditional basis $(v_i)$, and if $N\kin\vegtelen{\bn}$, then we
write $V_N$ for the closed linear span of $\{ v_i:\,i\kin N\}$. When
we talk about subsequential $V_N$-lower estimates, etc., it is with
respect to the normalized, $1$-unconditional basis $(v_i)_{i\in N}$ of
$V_N$.

\begin{rem}
Theorem~\ref{thm:embedding} has a quantitative version. Let
$C,D\kin[1,\infty)$ and assume that $(v_i)$ is $D$-left-dominant and
that $X$ satisfies subsequential $C$-$V$-lower tree estimates.

Then for all $K\kin[1,\infty)$ there is a constant $M\keq
M(C,D,K)\kin[1,\infty)$  such that in part~(a) if $K(E,Z)\kleq K$,
then in the conclusion $X$ $M$-embeds into $Z^{V_N}(H)$. Indeed, this
follows directly from the proof. What is important is that $M$ depends
only on the constants $C,D$ and $K$.

Also, there exists a constant $L\keq L(C,D)\kin [1,\infty)$ such that
in the conclusion of part~(b) we get an onto map $\Qt\colon
\Yt^{V_N}(\Gt)\to X$ with $\norm{\Qt}\keq 1$ and $\Qt(L\kcdot
B_{\Yt^{V_N}})\ksupset B_X$. This also follows directly from the
proof. However, the proof of part~(b) uses~\cite[Lemma 3.1]{OS1},
which in turn appeals to a theorem of Zippin~\cite{Z}. The theorem of
Zippin we need here states that every separable, reflexive space
embeds isometrically into a reflexive space with an FDD. A
quantitative version of this result claims the existence of a
universal constant $K$ such that every separable, reflexive space
embeds isometrically into a reflexive space with an FDD whose
projection constant is at most~$K$. Indeed, if this wasn't true, then
for all $n\kin\bn$ there would be a ``bad'' space $X_n$, and then the
$\ell_2$-sum of the sequence $(X_n)$ would contradict Zippin's
theorem. The existence of this universal constant $K$ gives a
quantitative version of (a special case of)~\cite[Lemma 3.1]{OS1}:
there is a universal constant $\Kb$ such that every separable,
reflexive space $X$ embeds isometrically into a reflexive space $Z$
with an FDD $E\keq (E_i)$ with $K(E,Z)\kleq \Kb$ such that
$\coo(\oplus_{i=1}^\infty E_i) \cap X$ is dense in $X$. The proof of
part~(b) now really does give the quantitative version of~(b) stated
above.

The consequences of all this are quantitative analogues of
Corollaries~\ref{cor:subspace-quotient}
and~\ref{prop:full-tree-est-duality}, and of
Theorem~\ref{thm:lower-and-upper-embedding}. We shall state (without
proof) the quantitative analogue of
Theorem~\ref{thm:lower-and-upper-embedding}, and leave the reader to
formulate the analogues of Corollaries~\ref{cor:subspace-quotient}
and~\ref{prop:full-tree-est-duality}. The proofs are straightforward:
one simply needs to keep track of the various constants in the proofs
of the qualitative statements.
\end{rem}

\begin{cor}
  \label{cor:subspace-quotient}
  Assume that $V$ is a reflexive Banach space with a normalized and
  $1$-unconditional basis $(v_i)$, and that $(v_i)$ is left-dominant
  and block-stable. Let $X$ be a separable, infinite-dimensional,
  reflexive space with subsequential $V$-lower tree estimates.

  Then $X$ is a subspace of a reflexive space $Z$ with an FDD
  satisfying subsequential $V$-lower estimates and it is a quotient of
  a reflexive space $Y$ with an FDD satisfying subsequential $V$-lower
  estimates.
\end{cor}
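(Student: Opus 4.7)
The plan is to reduce both statements to Theorem~\ref{thm:embedding}, invoking the properties of the $Z^V(E)$ construction established in Section~\ref{section:zv}, and then to upgrade subsequential $V_N$-lower estimates to subsequential $V$-lower estimates via Lemma~\ref{lem:norm-est-for-subseq} (or, more precisely, the lower-estimate portion of its proof, which does not use the upper estimate hypothesis).

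For the subspace part, first apply Zippin's theorem (or the result \cite[Lemma~3.1]{OS1} referenced in the remark after Theorem~\ref{thm:embedding}) to isometrically embed $X$ into a reflexive space $Z_0$ with an FDD $E\keq(E_i)$. By Theorem~\ref{thm:embedding}(a), there is a blocking $H\keq(H_i)$ of $(E_i)$ and an infinite set $N\ksubset\bn$ such that $X$ isomorphically embeds into $Z_0^{V_N}(H)$. Because $Z_0$ is reflexive, $(E_i)$ is shrinking and hence so is the blocking $(H_i)$; since $V_N$ is a closed subspace of the reflexive space $V$ spanned by basis vectors, it is itself reflexive with normalized, $1$-unconditional basis $(v_i)_{i\in N}$. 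Corollary~\ref{cor:refl-ZV} then says that $Z_0^{V_N}(H)$ is reflexive, and Lemma~\ref{lem:ZV_has-V-lower-est} says that $(H_i)$ satisfies subsequential $V_N$-lower estimates in $Z_0^{V_N}(H)$ (here we use that $(v_i)$ is block-stable).

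To upgrade from $V_N$-lower to $V$-lower estimates, set
\[
Z\keq Z_0^{V_N}(H)\oplus_{\ell_\infty}V_{\bn\setminus N}.
\]
This is a direct sum of two reflexive spaces, hence reflexive; interleaving the FDDs $(H_i)$ and the one-dimensional spaces $\br\kcdot v_n$ for $n\knotin N$ (as in the proof of Lemma~\ref{lem:norm-est-for-subseq}) produces an FDD on $Z$. The lower-estimate half of the argument in Lemma~\ref{lem:norm-est-for-subseq} uses only the subsequential $V_N$-lower estimates of $(H_i)$ in $Z_0^{V_N}(H)$ together with block-stability of $(v_i)$, and therefore delivers subsequential $V$-lower estimates of this FDD in $Z$. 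Since $X$ embeds into the first summand, it embeds into $Z$, which proves the subspace half.

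For the quotient half, apply Theorem~\ref{thm:embedding}(b) to obtain a space $\Yt$ with a bimonotone shrinking FDD $\Gt\keq(\Gt_i)$ and $N\kin\vegtelen{\bn}$ such that $X$ is a quotient of $\Yt^{V_N}(\Gt)$. Exactly as above, Corollary~\ref{cor:refl-ZV} gives reflexivity of $\Yt^{V_N}(\Gt)$ and Lemma~\ref{lem:ZV_has-V-lower-est} gives subsequential $V_N$-lower estimates. Set
\[
Y\keq \Yt^{V_N}(\Gt)\oplus_{\ell_\infty}V_{\bn\setminus N};
\]
this is reflexive and, by the same argument, carries an FDD with subsequential $V$-lower estimates. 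Composing the natural projection $Y\twoheadrightarrow \Yt^{V_N}(\Gt)$ with the quotient map $\Yt^{V_N}(\Gt)\twoheadrightarrow X$ exhibits $X$ as a quotient of $Y$. The only real subtlety in the whole argument is that Lemma~\ref{lem:norm-est-for-subseq} is stated with simultaneous $(V,U)$ estimates, so one must inspect its proof to confirm that its lower-estimate conclusion requires only the lower-estimate hypothesis; this is routine and follows from the first chain of inequalities in that proof.
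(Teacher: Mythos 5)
Your proof is correct and follows essentially the same route as the paper: Zippin plus Theorem~\ref{thm:embedding}(a) and (b), then Corollary~\ref{cor:refl-ZV} and Lemma~\ref{lem:ZV_has-V-lower-est}, then the $\oplus_{\ell_\infty}V_{\bn\setminus N}$ device to upgrade $V_N$-lower to $V$-lower estimates. The only (cosmetic) difference is at the last step: rather than reproving the lower-estimate half of Lemma~\ref{lem:norm-est-for-subseq}, the paper applies the lemma as stated with $U\keq\ell_1$, whose unit vector basis is block-stable and dominates $(v_i)$, so the upper-estimate hypothesis is satisfied vacuously.
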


\begin{proof}
  By a theorem of Zippin~\cite{Z} we can embed $X$ into a reflexive
  space $W$ with an FDD $E\keq(E_i)$. Using
  Theorem~\ref{thm:embedding}~(a) we find a blocking $F\keq(F_i)$ of
  $(E_i)$ and $L\kin\vegtelen{\bn}$ such that $X$ embeds into $Z\keq
  W^{V_L}(F)$.

  Theorem~\ref{thm:embedding}~(b) provides a space $\Yt$ with a
  shrinking FDD $\Gt\keq(\Gt_i)$ and $M\kin\vegtelen{\bn}$ such that
  $X$ is a quotient of $Y\keq\Yt^{V_M}(\Gt)$.

  By Corollary~\ref{cor:refl-ZV} the spaces $Z$ and $Y$ are
  reflexive. It follows from Lemma~\ref{lem:ZV_has-V-lower-est} that
  $(F_i)$ satisfies subsequential $V_L$-lower estimates in
  $Z$, and that $(\Gt_i)$ satisfies subsequential $V_M$-lower
  estimates in $Y$. The result now follows from
  Lemma~\ref{lem:norm-est-for-subseq} (with $(u_i)$ the unit vector
  basis of $U\keq\ell_1$).
\end{proof}

From Corollary~\ref{cor:subspace-quotient} and
Proposition~\ref{prop:fdd-est-duality} we deduce in certain instances
the inverse implication of Proposition~\ref{prop:tree-est-duality}.

\begin{cor}
  \label{prop:full-tree-est-duality}
  Assume that $V$ is a reflexive Banach space with a normalized,
  $1$-unconditional basis $(v_i)$, and that $(v_i)$ is left-dominant
  and block-stable.
  
  If $X$ is a separable, infinite-dimensional, reflexive space which
  satisfies subsequential $V$-lower tree estimates, then $X^*$
  satisfies subsequential $V^*$-upper tree estimates.
\end{cor}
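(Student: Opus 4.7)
The plan is to move to the dual by invoking the quotient form of Corollary~\ref{cor:subspace-quotient}, transport the FDD norm estimate to its dual via Proposition~\ref{prop:fdd-est-duality}, and then pass from an FDD upper estimate in the ambient dual space to an upper tree estimate on $X^*$ by means of Proposition~\ref{prop:tree-est-sb-est}.

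First, apply Corollary~\ref{cor:subspace-quotient} to get a reflexive space $Y$ with an FDD $G\keq (G_i)$ satisfying subsequential $V$-lower estimates, together with a quotient map $Q\colon Y\to X$. Since $Y$ is reflexive, $G$ is shrinking, so $(G_i^*)$ is an FDD of $Y^*$, and $Q^*\colon X^*\to Y^*$ is an isomorphic embedding, so we regard $X^*$ as a closed subspace of the reflexive space $Y^*$. Proposition~\ref{prop:fdd-est-duality}, together with the canonical identification $V^{(*)(*)}\keq V$ provided by reflexivity of $V$, tells us that $(G_i^*)$ satisfies subsequential $V^*$-upper estimates in $Y^*$. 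Note that since $(v_i)$ is left-dominant and block-stable, the biorthogonal sequence $(v_i^*)$ in $\Vs$ is right-dominant and block-stable.

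It remains to turn this FDD estimate in $Y^*$ into a tree estimate on the subspace $X^*$. Fix $C$ large enough to absorb the upper-estimate constant of $(G_i^*)$ in $Y^*$, the right-dominance constant of $(v_i^*)$, and the block-stability constant of $(v_i^*)$, and set
\[
\cA\keq\big\{(k_i,x_i^*)\kin(\bn\ktimes S_{X^*})^\omega:\,(x_i^*)\text{ is }C\text{-dominated by }(v_{k_i}^*)\big\}.
\]
A branch of a weakly null even tree lying in $\overline{\cA_\ve}$ is, up to a harmless worsening of the constant, exactly the defining condition of subsequential $V^*$-upper tree estimates. So it suffices to verify condition~(b) of Proposition~\ref{prop:tree-est-sb-est}. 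Given $\ve\kge 0$, pick a fast-growing $(K_i)$, a null $\deltab\keq(\delta_i)$, and a blocking $F\keq (F_i)$ of $(G_i^*)$ adapted to $C$ and to the right-dominance constant of $(v_i^*)$. If $(x_i^*)\ksubset S_{X^*}$ is a $\deltab$-skipped block of $(F_n)$ in $Y^*$ with $\norm{x_i^*\kminus P^F_{(r_{i-1},r_i)}x_i^*}\kle\delta_i$, then $x_i^*$ lies within $\delta_i$ of a normalized block $z_i^*$ of $(G_j^*)$ supported strictly inside $F_{r_{i-1}+1}\kplus\dots\kplus F_{r_i-1}$; the subsequential $V^*$-upper estimate dominates $(z_i^*)$ by $(v_{m_i}^*)$ with $m_i\keq\min\supp_{G^*}(z_i^*)$, and the blocking is chosen so that $m_i\kgeq K_{r_{i-1}}$, at which point right-dominance of $(v_i^*)$ converts this into $C$-domination by $(v_{K_{r_{i-1}}}^*)$, and block-stability absorbs the $\delta_i$-perturbation to place $(K_{r_{i-1}},x_i^*)$ in $\overline{\cA_\ve}$.

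With~(b) verified, Proposition~\ref{prop:tree-est-sb-est} yields~(a): every normalized, weakly null even tree in $X^*$ has a branch in $\overline{\cA_\ve}$ for every $\ve\kge 0$. Since this branch is of the form $\big(n_{2i-1},x^*_{(n_1,\dots,n_{2i})}\big)$ and $\cA$ encodes $C$-domination by $(v_{n_{2i-1}}^*)$, $X^*$ satisfies subsequential $V^*$-upper tree estimates, as desired. The main obstacle is the bookkeeping in the previous paragraph: matching the skipped-block slack $(\delta_i)$, the min-support-growth schedule $(K_i)$, the right-dominance constant of $(v_i^*)$, and the block-stability constant so that the FDD upper estimate in $Y^*$ translates cleanly to membership in $\overline{\cA_\ve}$; the rest is a direct concatenation of three already-established results.
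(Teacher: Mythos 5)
Your first half coincides with the paper's: Corollary~\ref{cor:subspace-quotient} followed by Proposition~\ref{prop:fdd-est-duality} realizes $X^*$ as a subspace of a reflexive space with an FDD satisfying subsequential $V^*$-upper estimates. For the second half the paper does not go through Proposition~\ref{prop:tree-est-sb-est} at all: given a normalized, weakly null even tree in $X^*$ it directly selects a branch $\big(x^*_{(n_1,\dots,n_{2i})}\big)$ whose terms are within $2^{-i}$ of blocks supported in $[n_{2i-1},n_{2i+1})$ and applies the FDD upper estimate, with the reference index taken at $n_{2i-1}$, i.e.\ in the gap between consecutive blocks. Your detour through Proposition~\ref{prop:tree-est-sb-est} is a legitimate alternative --- its implication (b)$\Rightarrow$(a) is essentially that same branch-selection argument --- but it is heavier than needed, and as written your verification of~(b) contains a step that fails.

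The problem is here: you have $(z^*_i)$ dominated by $(v^*_{m_i})$ with $m_i\keq\min\supp_{G^*}(z^*_i)$, you arrange $m_i\kgeq K_{r_{i-1}}$, and you claim right-dominance of $(v^*_i)$ converts this into domination by $(v^*_{K_{r_{i-1}}})$. Right-dominance runs the other way: from $K_{r_{i-1}}\kleq m_i$ it gives that $(v^*_{K_{r_{i-1}}})$ is dominated by $(v^*_{m_i})$, so the chain $(z^*_i)\preceq(v^*_{m_i})\succeq(v^*_{K_{r_{i-1}}})$ proves nothing; to descend from $m_i$ to the smaller index $K_{r_{i-1}}$ you would need left-dominance of $(v^*_i)$, i.e.\ right-dominance of $(v_i)$, which is not assumed. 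The repair is the Remark following the definition of subsequential estimates: the upper estimate holds with respect to $(v^*_{p_i})$ for \emph{any} $p_i$ with $\max\supp_{G^*}(z^*_{i-1})\kle p_i\kleq\min\supp_{G^*}(z^*_i)$, so one may take $p_i$ at the bottom of the skipped window, $p_i\keq M_{r_{i-1}}\kplus 1$ where $F_j\keq\bigoplus_{i=M_{j-1}+1}^{M_j}G^*_i$; setting $K_j\keq M_j\kplus 1$ gives $p_i\kleq K_{r_{i-1}}$, which is exactly what membership in $\cA_\ve$ (with its built-in slack $k_i\kleq\ell_i$) requires, with no dominance transfer at this stage. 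Right-dominance of $(v^*_i)$ is then used, correctly, only when unpacking $\overline{\cA_\ve}$ into the final tree estimate. (Also, it is $1$-unconditionality and normalization of $(v^*_i)$, not block-stability, that absorb the $\delta_i$-perturbations, via $\sup_i\abs{a_i}\kleq\bignorm{\sum a_iv^*_{k_i}}$.)
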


\begin{proof}
  By Corollary~\ref{cor:subspace-quotient} $X$ is a quotient of a
  reflexive space with an FDD satisfying subsequential $V$-lower
  estimates. Hence, by Proposition~\ref{prop:fdd-est-duality}, $X^*$
  is the subspace of a reflexive space $Z$ with an FDD $(E_i)$
  satisfying subsequential $V^*$-upper estimates.

  Now let $(x_\alpha)_{\alpha\in\odd}$ be a normalized, weakly null
  even tree in $X^*$. One can recursively choose $n_1\kle
  n_2\kle\dots$ in $\bn$ such that
  \[
  \norm{x_{(n_1,n_2,\dots,n_{2i})} - P^E_{[n_{2i-1},n_{2i+1})}\big(
    x_{(n_1,n_2,\dots,n_{2i})}\big)} < 2^{-i}
  \qquad\text{for all }i\kin\bn\ .
  \]
  Set
  \[
  z_i=\frac{P^E_{[n_{2i-1},n_{2i+1})}\big(
    x_{(n_1,n_2,\dots,n_{2i})}\big)}{\norm{P^E
    _{[n_{2i-1},n_{2i+1})}\big( x_{(n_1,n_2,\dots,n_{2i})}\big)}}
    \qquad\text{for all }i\kin\bn\ .
  \]
  Then $(z_i)$ is dominated by $(v^*_{n_{2i-1}})$ since $(E_i)$
  satisfies subsequential $V^*$-upper estimates. It follows that
  $\big(x_{(n_1,n_2,\dots,n_{2i})}\big)$ is also dominated by
  $\big(v^*_{n_{2i-1}}\big)$.
\end{proof}

\begin{thm}
  \label{thm:lower-and-upper-embedding}
  Let $V$ and $U$ be reflexive Banach spaces with $1$-unconditional,
  normalized and block-stable bases $(v_i)$ and $(u_i)$,
  respectively. Further assume that $(v_i)$ is left-dominant, $(u_i)$
  is right-dominant, and that $(v_i)$ is dominated by $(u_i)$.

  If $X$ is a separable, infinite-dimensional, reflexive Banach space
  which satisfies subsequential $(V,U)$-tree estimates, then $X$ can
  be embedded into a reflexive Banach space $Z$ with an FDD $(G_i)$
  which satisfies subsequential $(V,U)$-estimates in $Z$.
\end{thm}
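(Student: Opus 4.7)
The plan is to combine the duality of Section~\ref{section:prelim} with the embedding Theorem~\ref{thm:embedding}~(a) and the construction $Z\mapsto Z^V$ of Section~\ref{section:zv}. First I would use the $U$-upper tree estimates on $X$, via duality, to embed $X$ into a reflexive space whose FDD already carries subsequential $U$-upper estimates; then superimpose subsequential $V$-lower estimates via Theorem~\ref{thm:embedding}~(a); and finally convert the resulting subsequential $(V_N,U_N)$-estimates into genuine $(V,U)$-estimates by means of Lemma~\ref{lem:norm-est-for-subseq}.

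For the duality step, since $X$ satisfies subsequential $U$-upper tree estimates and $(u_i)$ is right-dominant, Proposition~\ref{prop:tree-est-duality} gives that $X^*$ satisfies subsequential $U^*$-lower tree estimates. The basis $(u^*_i)$ of $U^*$ is $1$-unconditional, normalized, block-stable and left-dominant, and $U^*$ is reflexive. Applying Corollary~\ref{cor:subspace-quotient} to $X^*$ (with $U^*$ in the role of $V$) realizes $X^*$ as a quotient of a reflexive space $Y$ with an FDD $(G_i)$ satisfying subsequential $U^*$-lower estimates. Dualizing via Proposition~\ref{prop:fdd-est-duality} and using reflexivity, the space $W\keq Y^*$ is reflexive with FDD $(F_i)\keq(G^*_i)$ satisfying subsequential $U$-upper estimates, and the adjoint of the quotient map identifies $X$ with a subspace of $W$.

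Now apply Theorem~\ref{thm:embedding}~(a) to the inclusion $X\ksubset W$ using the left-dominant basis $(v_i)$: there exist a blocking $H\keq(H_i)$ of $(F_i)$ and $N\kin\vegtelen{\bn}$ such that $X$ naturally isomorphically embeds into $Z\keq W^{V_N}(H)$, a reflexive space by Corollary~\ref{cor:refl-ZV}. Lemma~\ref{lem:ZV_has-V-lower-est} ensures that $(H_i)$ satisfies subsequential $V_N$-lower estimates in $Z$. For the upper estimates, $(H_i)$ inherits subsequential $U$-upper estimates in $W$ from $(F_i)$ (using the right-dominance of $(u_i)$ and the flexibility in the subsequential index recorded in the remark following the definition), and then Lemma~\ref{lem:ZV-keeps-U-upper-est}, applied with $V_N$ and $U_N$ in place of $V$ and $U$ (the block-stability and domination hypotheses being inherited by restriction to $N$), shows that $(H_i)$ satisfies subsequential $U_N$-upper estimates in $Z$. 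Thus $(H_i)$ satisfies subsequential $(V_N,U_N)$-estimates in $Z$; applying Lemma~\ref{lem:norm-est-for-subseq} with $M\keq N$ produces the reflexive space $Z'\keq Z\oplus_{\ell_\infty} V_{\bn\setminus N}$ whose FDD satisfies subsequential $(V,U)$-estimates, and the chain $X\ksubset Z\ksubset Z'$ yields the required embedding.

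The main technical obstacle is the preservation of $U$-upper estimates when passing to the blocking $H$ of $(F_i)$. Lemma~\ref{lem:ZV-keeps-U-upper-est} transfers $U$-upper from an FDD in $W$ to the same FDD in $W^{V_N}$, but Theorem~\ref{thm:embedding}~(a) delivers a blocking $(H_i)$ rather than $(F_i)$ itself, and under mere right-dominance general blockings need not preserve $U$-upper estimates. Reconciling this requires careful use of the subsequential indexing flexibility and right-dominance of $(u_i)$ to shift indices within the blocking intervals; alternatively, one may combine the proof of Theorem~\ref{thm:embedding}~(a) with an analogous treatment of the $U$-upper tree estimates of $X$ via Proposition~\ref{prop:tree-est-sb-est}, choosing the blocking $H$ so that both sets of tree estimates are simultaneously respected.
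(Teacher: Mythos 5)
Your proposal is correct and follows essentially the same route as the paper: duality (Proposition~\ref{prop:tree-est-duality}, Corollary~\ref{cor:subspace-quotient}, Proposition~\ref{prop:fdd-est-duality}) to realize $X$ inside a reflexive space whose FDD carries subsequential $U$-upper estimates, then Theorem~\ref{thm:embedding}~(a) together with Corollary~\ref{cor:refl-ZV}, Lemma~\ref{lem:ZV_has-V-lower-est} and Lemma~\ref{lem:ZV-keeps-U-upper-est}, and finally Lemma~\ref{lem:norm-est-for-subseq}. The ``obstacle'' you flag at the end is resolved exactly by the first alternative you suggest, and this is what the paper does: one keeps \emph{two} index sets, the set $M$ produced by Theorem~\ref{thm:embedding}~(a) for the $V$-side and a set $N$ (read off from the blocking, and enlarged using right-dominance so that $m_i\kleq n_i$) for which the blocked FDD satisfies subsequential $U_N$-upper estimates; then $(v_i)_{i\in M}$ is dominated by $(u_i)_{i\in N}$, Lemma~\ref{lem:ZV-keeps-U-upper-est} applies with $V_M$ and $U_N$, left-dominance upgrades the $V_M$-lower to $V_N$-lower estimates, and Lemma~\ref{lem:norm-est-for-subseq} finishes --- so no reworking of the proof of Theorem~\ref{thm:embedding}~(a) is needed.
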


\begin{proof} By Proposition~\ref{prop:tree-est-duality} $X^*$
  satisfies subsequential $U^*$-lower tree estimates, and we can apply
  Corollary~\ref{cor:subspace-quotient} to deduce that $X^*$ is the
  quotient of a  reflexive space $Y^*$ with an FDD  $(E_i^*)$ ($Y^*$
  being the dual of a space $Y$ with an FDD $(E_i)$) satisfying
  subsequential $U^*$-lower estimates in $Y^*$. Thus $X$ is a subspace
  of the reflexive space $Y$ having an FDD $(E_i)$ which, by
  Proposition~\ref{prop:fdd-est-duality}, satisfies subsequential
  $U$-upper estimates in $Y$.

  Theorem~\ref{thm:embedding} part~(a) yields a blocking $F\keq (F_i)$
  of $(E_i)$ and an infinite subset $M$ of $\bn$ such that $X$ embeds
  into $Z\keq Y^{V_M}(F)$. 

  By Corollary~\ref{cor:refl-ZV} the space $Z$ is reflexive, and by
  Lemma~\ref{lem:ZV_has-V-lower-est} $(F_i)$ satisfies subsequential
  $V_M$-lower estimates in $Z$. Since $(E_i)$ satisfies subsequential
  $U$-upper estimates in $Y$, there exists $N\kin\vegtelen{\bn}$ such
  that $(F_i)$ satisfies subsequential $U_N$-upper estimates in
  $Y$. Since $(u_i)$ is right-dominant, we may assume after replacing
  $N$ if necessary that $m_i\kleq n_i$ for all $i\kin\bn$, where $m_i$
  and $n_i$ are the $i^{\mathrm{th}}$ elements of $M$ and $N$,
  respectively. Now $(v_i)_{i\in M}$ is dominated by $(u_i)_{i\in N}$,
  so by Lemma~\ref{lem:ZV-keeps-U-upper-est} $(F_i)$ also satisfies
  subsequential $U_N$-upper estimates in $Z$. Finally, since $(v_i)$
  is left-dominant, $(F_i)$ satisfies subsequential
  $(V_N,U_N)$ estimates in $Z$. An application of
  Lemma~\ref{lem:norm-est-for-subseq} completes the argument.
\end{proof}

Before proceeding to the proof of Theorem~\ref{thm:embedding} we state
the quantitative version of
Theorem~\ref{thm:lower-and-upper-embedding} as promised earlier.

\begin{thm}
  \label{thm:lower-and-upper-embedding'}
  For all $B,C,D,L,R\kin[1,\infty)$ there exist constants
  $\Cb\keq\Cb(B,D,R)$ and $K\keq K(C,L,R)$ in $[1,\infty)$ such that
  the following
  holds. Let $V$ and $U$ be reflexive Banach spaces with
  $1$-unconditional, normalized and $B$-block-stable bases $(v_i)$ and
  $(u_i)$, respectively. Further assume that $(v_i)$ is
  $L$-left-dominant, $(u_i)$ is $R$-right-dominant, and that $(v_i)$
  is $D$-dominated by $(u_i)$.

  If $X$ is a separable, infinite-dimensional, reflexive Banach space
  which satisfies subsequential $C$-$(V,U)$-tree estimates, then $X$ can
  be $K$-embedded into a reflexive Banach space $Z$ which has a
  bimonotone FDD $(G_i)$ satisfying subsequential
  $\Cb$-$(V,U)$-estimates in $Z$. \qed
\end{thm}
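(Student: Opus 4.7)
The proof is a direct constant-tracking of the proof of Theorem~\ref{thm:lower-and-upper-embedding}; no new idea is required, and the work reduces to checking that after composition the final constants depend only on the claimed parameters.

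First, Proposition~\ref{prop:tree-est-duality} (applied with, say, $\ve\keq 1$) converts the hypothesis of subsequential $C$-$(V,U)$-tree estimates on $X$, together with the $R$-right-dominance of $(u_i)$, into subsequential $C_1$-$U^*$-lower tree estimates on $X^*$, with $C_1\keq 2CR\kplus 1$. The quantitative version of Corollary~\ref{cor:subspace-quotient}---built from Theorem~\ref{thm:embedding}(b), Corollary~\ref{cor:refl-ZV}, Lemma~\ref{lem:ZV_has-V-lower-est}, and Lemma~\ref{lem:norm-est-for-subseq} with the upper side trivial ($U\keq\ell_1$)---then presents $X^*$ as a quotient of a reflexive space $Y^*$ whose bimonotone FDD $(E_i^*)$ satisfies subsequential $C_2$-$U^*$-lower estimates. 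Chasing Lemmas~\ref{lem:ZV_has-V-lower-est} and~\ref{lem:norm-est-for-subseq} shows that $C_2$ depends only on $B$, while the quotient constant $L_0$ is a function of $C_1$ and $R$, hence of $C$ and $R$. Proposition~\ref{prop:fdd-est-duality} (bimonotone case) then gives $(E_i)$ subsequential $C_2$-$U$-upper estimates in $Y$, and $X$ embeds into $Y$ with constant $L_0$.

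Next, I would apply the quantitative Theorem~\ref{thm:embedding}(a) to $X\ksubset Y$: since $K(E,Y)\keq 1$, there is a blocking $F\keq (F_i)$ of $(E_i)$ and $M\kin\vegtelen{\bn}$ such that $X$ $K_0$-embeds into $Z\keq Y^{V_M}(F)$, where $K_0$ is a function of $C$ and $L$. Corollary~\ref{cor:refl-ZV} gives reflexivity of $Z$, Lemma~\ref{lem:ZV_has-V-lower-est} gives $(F_i)$ subsequential $2B$-$V_M$-lower estimates in $Z$, and, using the $R$-right-dominance of $(u_i)$ to thin to $N\kin\vegtelen{\bn}$ with $m_i\kleq n_i$ (so that $(v_i)_{i\in M}$ is $RD$-dominated by $(u_i)_{i\in N}$), the quantitative Lemma~\ref{lem:ZV-keeps-U-upper-est} transfers $U$-upper estimates from $Y$ to $Z$ with a constant polynomial in $B$, $C_2$, $RD$, and the projection constant (which is $1$), hence a function of $B,D,R$ only.

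Finally, the $L$-left-dominance of $(v_i)$ converts $V_M$-lower to $V_N$-lower on $(F_i)$ in $Z$, and Lemma~\ref{lem:norm-est-for-subseq}, applied on the common index set $N$, produces a reflexive $W\keq Z\oplus_{\ell_\infty}V_{\bn\setminus N}$ whose FDD satisfies subsequential $\Cb$-$(V,U)$ estimates; an equivalent renorming makes the FDD bimonotone. Setting $K\keq L_0 K_0$ gives the embedding of $X$ into $W$ with the claimed dependence on $C,L,R$, and tracking the arithmetic of Lemmas~\ref{lem:ZV-keeps-U-upper-est} and~\ref{lem:norm-est-for-subseq} shows that $\Cb$ emerges as a function of $B,D,R$ alone. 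The only genuine obstacle is the bookkeeping: one must verify at each of the seven or so invocations above that no extraneous dependence (in particular, no dependence of $\Cb$ on $C$) sneaks into the constants, and in particular that $C_2$ is independent of $C,L,R$ so that the subsequent invocation of Lemma~\ref{lem:ZV-keeps-U-upper-est} does not contaminate $\Cb$.
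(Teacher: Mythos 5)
Your overall route is the paper's intended one --- the paper offers no proof beyond the instruction to track constants through the proof of Theorem~\ref{thm:lower-and-upper-embedding}, and your walkthrough follows that proof step by step. The claim $K\keq K(C,L,R)$ does check out along the path you describe: the quotient constant from Theorem~\ref{thm:embedding}(b) applied to $X^*$ depends on $2CR\kplus\ve$ and on the left-dominance constant of $(u_i^*)$, which is $R$; the embedding constant from Theorem~\ref{thm:embedding}(a) depends on the (transported) tree-estimate constant, on $L$, and on the projection constant of $(E_i)$ in $Y$, which is $1$.

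There is, however, a genuine gap in the claim that $\Cb$ comes out as a function of $B,D,R$ alone, and it sits exactly in the step you describe as ``the $L$-left-dominance of $(v_i)$ converts $V_M$-lower to $V_N$-lower on $(F_i)$ in $Z$.'' Lemma~\ref{lem:ZV_has-V-lower-est} gives $(F_i)$ subsequential $2B$-$V_M$-lower estimates in $Z\keq Y^{V_M}(F)$; since $m_i\kleq n_i$, passing to $V_N$-lower estimates via $L$-left-dominance multiplies this constant by $L$, yielding $2BL$-$V_N$-lower estimates, and this constant feeds directly into the $\Cb$ produced by Lemma~\ref{lem:norm-est-for-subseq}. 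So the argument as written proves $\Cb\keq\Cb(B,D,L,R)$, not $\Cb(B,D,R)$. One cannot instead move the upper estimates from $N$ down to $M$: with $m_i\kleq n_i$, $R$-right-dominance controls $(u_{m_i})$ by $(u_{n_i})$, which is the wrong direction for converting $U_N$-upper into $U_M$-upper. The repair is to arrange the inequality the other way, \ie to choose the set $M$ from Theorem~\ref{thm:embedding}(a) so that it pointwise majorizes the natural upper-estimate index set $\{\min\supp_E(F_i)\}$; then the lower estimates stay on $M$ with constant $2B$, the upper estimates are moved up to $M$ at a cost of $R$ only, and Lemma~\ref{lem:ZV-keeps-U-upper-est} is applied with a single index set, giving $\Cb\keq\Cb(B,D,R)$. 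But this requires going back into the proofs of Proposition~\ref{prop:tree-est-sb-est} and Theorem~\ref{thm:embedding}(a) to verify that the sequence $(K_i)$, and hence $M$, can be taken arbitrarily large (which is possible, at the cost of a factor $L$ absorbed into the embedding constant $K$, where it is permitted). Your proposal neither performs this rearrangement nor flags that the route it does describe contaminates $\Cb$ with $L$ --- indeed it asserts the opposite in its final sentence --- so the verification it promises does not go through as stated.
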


\begin{proof}[Proof of Theorem~\ref{thm:embedding} part~(a)]
  Choose constants $C$ and $D$ in $[1,\infty)$ such that $X$
  satisfies subsequential $C$-$V$-lower tree estimates and  $(v_i)$ is
  $D$-left-dominant. Let $K$ be the projection constant of $(E_i)$ in
  $Z$. Set
  \[
  \cA = \big\{ (k_i,x_i)\kin (\bn\ktimes S_X)^\omega:\,(v_{k_i})\text{
  is $C$-dominated by }(x_i)\big\}\ ,
  \]
  and choose $\ve\kge 0$ such that
  \[
  \overline{\cA_\ve}\subset \big\{ (k_i,x_i)\kin
  (\bn\ktimes S_X)^\omega:\,(v_{k_i})\text{ is $2CD$-dominated by
  }(x_i)\big\}\ .
  \]
  By Proposition~\ref{prop:tree-est-sb-est} there exist
  $(K_i)\ksubset\bn$ with $K_1\kle K_2\kle\dots$, $\deltab\keq
  (\delta_i)\ksubset (0,1)$ with $\delta_i\downarrow 0$, and a
  blocking $F\keq (F_i)$ of $(E_i)$ such that
  if $(x_i)\ksubset S_X$ is a $(2K\deltab)$-skipped block sequence of
  $(F_n)$ in $Z$ with
  $\norm{x_i-P^F_{(r_{i-1},r_i)}x_i}\kle2K\delta_i$ for all
  $i\kin\bn$, where $1\kleq r_0\kle r_1\kle r_2\kle\dots$, then
  $(v_{K_{r_{i-1}}})$ is $2CD$-dominated by $(x_i)$.

  It is easy to see that we can block $(F_i)$ into an FDD $G\keq
  (G_i)$ such that there exists $(e_n)\ksubset S_X$ with
  \[
  \norm{e_n-P^G_n(e_n)} < \delta_n/2K\qquad\text{for all }n\kin\bn\ .
  \]
  Let $(v''_i)$ be a subsequence of $(v_i)$ such that if
  $(x_i)\ksubset S_X$ is a $\deltab$-skipped block sequence of $(G_n)$
  in $Z$ with $\norm{x_i-P^G_{(r_{i-1},r_i)}x_i}\kle\delta_i$ for all
  $i\kin\bn$, where $1\kleq r_0\kle r_1\kle r_2\kle\dots$, then
  $(v''_{r_{i-1}})$ is $2CD$-dominated by $(x_i)$. Note that if
  $G_j\keq\bigoplus_{i=m_{j-1}+1}^{m_j} F_i,\ j\kin\bn,\ 0\keq m_0\kle
  m_1\kle m_2\kle\dots$, then $(v''_i)\keq (v_{K_{m_i}})$ will do.

  In order to continue we need the following result from~\cite{OS1},
  which is due (in a different form) to W.~B.~Johnson~\cite{J}.

  \begin{prop}
    \label{prop:x=sum-of-sb}
    Let $X$ be a Banach space which is a subspace of a reflexive space
    $Z$ with an FDD $A\keq (A_i)$ having projection constant $K$. Let
    $\etab\keq(\eta_i)\ksubset (0,1)$ with $\eta_i\downarrow 0$. Then
    there exist positive integers $N_1\kle N_2\kle\dots$ such that the
    following holds. Given positive integers $1\kleq k_0\kle
    k_1\kle\dots$ and $x\in S_X$, there exist $x_i\kin X$ and $t_i\kin
    (N_{k_{i-1}-1},N_{k_{i-1}})$ ($i\kin\bn,\ N_0\keq 0$) such that

      \begin{mylist}{(a)}
      \item[{(a)}]
	$x\keq\sum_{i=1}^\infty x_i$, and for all $i\kin\bn$ we have
	(putting $t_0\keq 0$)
      \item[{(b)}]
	either $\norm{x_i}\kle\eta_i$ or $\norm{x_i\kminus
	P^A_{(t_{i-1},t_i)} x_i}\kle\eta_i \norm{x_i}$\ ,
      \item[{(c)}]
	$\norm{x_i\kminus P^A_{(t_{i-1},t_i)} x}\kle\eta_i$\ ,
      \item[{(d)}]
	$\norm{x_i}\kle K\kplus 1$\ ,
      \item[{(e)}]
	$\norm{P_{t_i}^A x}\kle\eta_i$\ .
      \end{mylist}
  \end{prop}
  This result is in fact a slight variation of (and follows easily
  from the proof of) Corollary~4.4 in~\cite{OS1}.
  
  We now apply Proposition~\ref{prop:x=sum-of-sb} with $A\keq G$ and
  $\etab\keq\deltab$ to obtain an appropriate sequence $N_1\kle
  N_2\kle\dots$ of positive integers. Set
  $H_j\keq\bigoplus_{i=N_{j-1}+1}^{N_j} G_i$ for each $j\kin\bn$ (and
  with $N_0\keq 0$), and let $(v'_i)$ be the subsequence of $(v_i)$
  defined by $v'_i\keq v''_{N_i}$ for all $i\kin\bn$. Let
  $N\kin\vegtelen{\bn}$ be chosen such that $(v_i)_{i\in N}$ is the
  subsequence $(v'_i)$ of $(v_i)$.

  Fix $x\kin S_X$ and a sequence $1\kleq n_0\kle n_1\kle\dots$ in
  $\bn$. We will show that
  \begin{equation}
    \label{eq:embedding(a);aim}
    \Bignorm{\sum _{i=1}^\infty \norm{P^H_{[n_{i-1},n_i)}(x)}_Z\kcdot
    v'_{n_{i-1}}}_V \leq 4KD^2C(K+2\Delta+2) + K(K+1) + 3K\Delta\ ,
  \end{equation}
  where $\Delta\keq\sum_{i=1}^\infty\delta_i$. Taking then the
  supremum over all choices of $(n_i)$, we obtain that the norms
  $\norm{\cdot}_Z$ and $\norm{\cdot}_{Z^{V_N}(H)}$ are equivalent when
  restricted to $X$, and hence statement~(a) follows.

  Set $M_i\keq N_{n_i-1}$ for $i\keq 0,1,2,\dots$. We thus have to
  show that
  \[
  \Bignorm{\sum _{i=1}^\infty \norm{P^G_{(M_{i-1},M_i]}(x)}_Z\kcdot
  v'_{n_{i-1}}}_V \leq 4KD^2C(K+2\Delta+2) + K(K+1) + 3K\Delta\ .
  \]
  For each $i\kin\bn$ choose $x_i\kin X$ and
  $t_i\kin(M_{i-1},N_{n_{i-1}})$ such that (a)--(e) of
  Proposition~\ref{prop:x=sum-of-sb} hold with $A\keq G$ and
  $\etab\keq\deltab$.

  For each $i\kin\bn$ let $\xb_i\keq \frac{x_{i+1}}{\norm{x_{i+1}}}$
  and $\alpha_i\keq\norm{x_{i+1}}$ if
  $\norm{x_{i+1}}\kgeq\delta_{i+1}$, and let $\xb_i\keq e_{M_i}$ and
  $\alpha_i\keq 0$ if $\norm{x_{i+1}}\kle\delta_{i+1}$. Observe that
  $\norm{\xb_i\kminus P^G_{(t_i,t_{i+1})}(\xb_i)}\kle \delta_i$ for all
  $i\kin\bn$, from which it follows that $(v''_{t_i})$ is
  $2CD$-dominated by  $(\xb_i)$. Hence

  \begin{align*}
    \Bignorm{\sum_{i=1}^{\infty}x_i}_Z\geq &\Bignorm{\sum_{i=1}^\infty
      \alpha_i\xb_i}_Z-\norm{x_1}_Z-\Delta\\
    \geq &\frac{1}{2CD}\Bignorm{\sum_{i=1}^\infty \alpha_i
      v''_{t_i}}_V-(K+1)-\Delta\notag\\
    \geq &\frac{1}{2CD}\Bignorm{\sum_{i=1}^\infty \norm{x_{i+1}}_Z\kcdot
      v''_{t_i}}_V- \frac{1}{2CD}\Delta-(K+1)-\Delta\notag\ ,
  \end{align*}
  and thus
  \begin{equation}
    \label{eq:embedding(a);dominate-vi}
    \Bignorm{\sum_{i=1}^\infty \norm{x_{i+1}}_Z\kcdot v''_{t_i}}_V\leq
    2CD(K+2\Delta+2)\ .
  \end{equation}
  
  For each $i\kin\bn$ we have (putting $t_0\keq 0$)
  \[
  \norm{P^G_{(M_{i-1},M_i]}(x)}_Z \leq
  K\norm{P^G_{(t_{i-1},t_{i+1})}(x)}_Z \leq K\big(
  \norm{x_i}_Z+\norm{x_{i+1}}_Z+3\delta_i\big)\ .
  \]
  It follows that
  \begin{align*}
    \Bignorm{\sum_{i=1}^\infty \norm{&P^G_{(M_{i-1},M_i]}(x)}_Z\kcdot
    v'_{n_{i-1}}}_V\\
    & \leq K\Bignorm{\sum_{i=1}^\infty \norm{x_i}_Z\kcdot
    v'_{n_{i-1}}}_V
    + K\Bignorm{\sum_{i=1}^\infty \norm{x_{i+1}}_Z\kcdot
      v'_{n_{i-1}}}_V + 3K\Delta\\
    &\leq K\Bignorm{\sum_{i=1}^\infty \norm{x_{i+1}}_Z\kcdot
      v'_{n_i}}_V
    + K\Bignorm{\sum_{i=1}^\infty \norm{x_{i+1}}_Z\kcdot
      v'_{n_{i-1}}}_V + K(K+1) + 3K\Delta\\
    & \leq 2KD\Bignorm{\sum_{i=1}^\infty \norm{x_{i+1}}_Z\kcdot
      v''_{t_i}}_V + K(K+1)+3K\Delta\\
    \intertext{(since $v'_{n_{i-1}}\keq v''_{N_{n_{i-1}}}$ and
      $N_{n_{i-1}}\kge t_i$ for all $i\kin\bn$)}\\
    &\leq 4KD^2C(K+2\Delta+2) + K(K+1) + 3K\Delta\ .
  \end{align*}
\end{proof}

Before we prove part~(b) of Theorem~\ref{thm:embedding} we need a
blocking result due to Johnson and Zippin.

\begin{prop}
  \label{prop:jz-blocking} \cite{JZ1}
  Let $T\colon Y\to Z$ be a bounded linear operator from a space $Y$
  with a shrinking FDD $(G_i)$ into a space $Z$ with an FDD
  $(H_i)$. Let $\ve_i\downarrow 0$. Then there exist blockings
  $E\keq(E_i)$ of $(G_i)$ and $F\keq(F_i)$ of $(H_i)$ so that for all
  $m\kle n$ and $y\kin S_{\bigoplus_{i\in (m,n)} E_i}$ we have
  $\norm{P^F_{[1,m)} Ty}\kle \ve_m$ and $\norm{P^F_{[n,\infty)}
    Ty}\kle\ve_n$.
\end{prop}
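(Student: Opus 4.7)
The plan is to construct the two blockings simultaneously by a joint induction, producing strictly increasing sequences $0 \keq m_0 \kle m_1 \kle m_2 \kle \dots$ and $0 \keq n_0 \kle n_1 \kle n_2 \kle \dots$ and then setting $E_k \keq \bigoplus_{m_{k-1} < i \kleq m_k} G_i$ and $F_k \keq \bigoplus_{n_{k-1} < i \kleq n_k} H_i$. For indices $p \kle q$ and $y \kin S_{\bigoplus_{p < i < q} E_i}$, the vector $y$ is supported on $G_j$ with $m_p \kle j \kleq m_{q-1}$, so the desired inequalities translate into
\[
\bignorm{P^H_{[1, n_{p-1}]} Ty} \kleq \ve_p \quad \text{and} \quad \bignorm{P^H_{(n_{q-1}, \infty)} Ty} \kleq \ve_q.
\]
The first depends only on the pair $(n_{p-1}, m_p)$ and the second only on the pair $(m_{q-1}, n_{q-1})$, so all instances can be arranged by fixing one pair at a time.

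At the inductive step, assume $m_0, n_0, \dots, m_{k-1}, n_{k-1}$ have been chosen. First I pick $m_k \kge m_{k-1}\kplus 1$ to deal with the upper condition at $p \keq k$. Since $P^H_{[1, n_{k-1}]}$ has finite-dimensional range, the operator $P^H_{[1, n_{k-1}]} T$ is compact, and its adjoint maps $B_{Z^*}$ into a norm-compact subset $\cK_k \ksubset Y^*$. Because $(G_i)$ is a shrinking FDD, $\bignorm{f|_{\bigoplus_{j > m} G_j}} \to 0$ as $m \to \infty$ for each $f \kin Y^*$, and a finite $\eta$-net in $\cK_k$, together with the projection constant of $(G_i)$, upgrades this to uniform convergence on $\cK_k$. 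Choose $m_k$ large enough that $\sup_{f \in \cK_k} \bignorm{f|_{\bigoplus_{j > m_k} G_j}} \kleq \ve_k$; then $|z^*(P^H_{[1, n_{k-1}]} Ty)| \kleq \ve_k$ for every $z^* \kin B_{Z^*}$ and every $y \kin S_{\bigoplus_{j > m_k} G_j}$.

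Next I pick $n_k \kge n_{k-1}\kplus 1$ to deal with the lower condition at $q \keq k\kplus 1$. The set $T(B_{\bigoplus_{j \kleq m_k} G_j})$ lies in the finite-dimensional subspace $T(\bigoplus_{j \kleq m_k} G_j)$ of $Z$ and so is relatively norm-compact; since $(H_i)$ is an FDD, the tail projections $P^H_{(n, \infty)}$ converge pointwise to $0$ on $Z$, and again a finite-net argument using the projection constant of $(H_i)$ yields uniform convergence on this compact set. Choose $n_k$ so that $\bignorm{P^H_{(n_k, \infty)} Ty} \kleq \ve_{k+1}$ for every $y$ in the unit ball of $\bigoplus_{j \kleq m_k} G_j$.

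To conclude, for $p \kle q$ and $y \kin S_{\bigoplus_{p < i < q} E_i}$, the support condition on $y$ and the choice of $m_p$ at step $k \keq p$ give $\bignorm{P^F_{[1, p)} Ty} \keq \bignorm{P^H_{[1, n_{p-1}]} Ty} \kleq \ve_p$, while the choice of $n_{q-1}$ at step $k \keq q\kminus 1$ gives $\bignorm{P^F_{[q, \infty)} Ty} \keq \bignorm{P^H_{(n_{q-1}, \infty)} Ty} \kleq \ve_q$. The only subtle point, and the closest thing to a main obstacle, is the passage from pointwise tail-decay to uniform tail-decay on the compact sets $\cK_k$ and $T(B_{\bigoplus_{j \kleq m_k} G_j})$; this is where the hypothesis that $(G_i)$ is \emph{shrinking} enters in an essential way, since without it the first uniform statement fails even on singletons in $\cK_k$.
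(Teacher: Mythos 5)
Your proof is correct, and it is essentially the argument of Johnson--Zippin, which the paper simply cites for this proposition without reproducing: an alternating induction in which $m_k$ is chosen via the shrinking hypothesis (tail restrictions of the finitely many functionals determining the finite-rank operator $P^H_{[1,n_{k-1}]}T$ tend to zero in norm) and $n_k$ is chosen via norm-compactness of $T(B_{\bigoplus_{j\le m_k}G_j})$ together with pointwise convergence of the partial-sum projections of $(H_i)$. The only blemish is the off-by-one in ``supported on $G_j$ with $m_p\le j\le m_{q-1}$'' (the support actually lies in $m_p<j\le m_{q-1}$, which is what your choice of $m_p$ requires); everything else, including the observation that shrinkingness of $(G_i)$ is exactly what makes the first uniform tail estimate possible, is accurate.
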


\begin{proof}[Proof of Theorem~\ref{thm:embedding} part~(b)]
  By Lemma 3.1 in~\cite{OS1} we can, after renorming $X$ if necessary,
  regard $X^*$ (isometrically) as a subspace of a reflexive space
  $Y^*$ (being the dual of a reflexive space $Y$ with bimonotone FDD
  $(E_i)$) such that $\coo(\oplus_{i=1}^\infty E^*_i)\cap X^*$ is
  dense in $X^*$. We have a natural quotient map $Q\colon Y\to X$. By
  a theorem of Zippin~\cite{Z} we may regard $X$ (isometrically) as a
  subspace of a reflexive space $Z$ with an FDD $(F'_i)$. Let $K$ be
  the projection constant of $(F'_i)$ in $Z$, and choose constants $C$
  and $D$ in $[1,\infty)$ such that $X$ satisfies subsequential
  $C$-$V$-lower tree estimates and $(v_i)$ is $D$-left-dominant.

  Using Proposition~\ref{prop:tree-est-sb-est} as in the proof of
  part~(a), we find sequences $(K_i)\ksubset\bn$ with $K_1\kle
  K_2\kle\dots$, $\deltab\keq (\delta_i)\ksubset (0,1)$ with
  $\delta_i\downarrow 0$, and a blocking $(F_i)$ of $(F'_i)$ such that
  if $(x_i)\ksubset S_X$ is a
  $2K\deltab$-skipped block sequence of $(F_n)$ in $Z$ with
  $\norm{x_i-P^F_{(r_{i-1},r_i)}x_i}_Z\kle2K\delta_i$ for all
  $i\kin\bn$, where $1\kleq r_0\kle r_1\kle r_2\kle\dots$, then
  $(v_{K_{r_{i-1}}})$ is $2CD$-dominated by $(x_i)$, and moreover,
  using standard perturbation arguments and making $\deltab$ smaller
  if necessary, we can assume that if $(z_i)\ksubset Z$ satisfies
  $\norm{x_i\kminus z_i}_Z\kle \delta_i$ for all $i\kin\bn$, then
  $(z_i)$ is a basic sequence equivalent to $(x_i)$ with projection
  constant at most $2K$. We also require that
  \begin{equation}
    \label{eq:embedding(b);Delta}
    \Delta=\sum_{i=1}^{\infty}\delta_i<\frac17\ .
  \end{equation}
  Choose a sequence $\veb\keq (\ve_i)\ksubset (0,1)$ with
  $\ve_i\downarrow 0$ and
  \begin{equation}
    \label{eq:embedding(b);epsilon}
    3K(K+1)\sum_{j=i}^{\infty}\ve_j<\delta_i^2\qquad\text{for all
    }i\kin\bn\ .
  \end{equation}
  After blocking $(F_i)$ if necessary, we can assume
  that for any subsequent blocking $D$ of $F$ there is a sequence
  $(e_i)$ in $S_X$ such that
  \begin{equation}
    \label{eq:embedding(b);gaps-in-sb}
    \norm{e_i-P^D_i(e_i)}_Z<\ve_i/2K\qquad\text{for all } i\kin\bn\ .
  \end{equation}
  By Proposition~\ref{prop:jz-blocking} we may assume, after further
  blocking our FDDs if necessary, that
  \begin{align}
    \label{eq:embedding(b);almost-diagonal}
    &\text{for all $m\kle n$ and $y\kin S_{\oplus_{i\in (m,n)} E_i}$
      we have}\\
    &\qquad \norm{P^F_{[1,m)}\circ Q(y)}\kle\ve_m\qquad\text{and}
      \qquad \norm{P^F_{[n,\infty)}\circ Q(y)}\kle\ve_n\ ,\notag
  \end{align}
  and moreover the same holds if one passes to any blocking of $(E_i)$
  and the corresponding blocking of $(F_i)$.

  For $i\kin\bn$ let $\Et_i$ be the quotient space of $E_i$ determined
  by $Q$, i.e., if $y\kin E_i$, then the norm of $\yt$, the
  equivalence class of $y$ in $E_i$, is given by
  $\tnorm{\yt}\keq\norm{Q(y)}$. Passing to a further blocking of
  $(E_i)$ (and the corresponding blocking of $(F_i)$), we may assume
  that $\Et_i\kneq\{0\}$ for all $i\kin\bn$. Given $y\keq\sum y_i \kin
  \coo(\oplus_{i=1}^\infty E_i)$, $y_i\kin E_i$ for all $i\kin\bn$, we
  set $\yt\keq\sum \yt_i\kin \coo(\oplus_{i=1}^\infty \Et_i)$ and
  \[
  \tnorm{\yt}=\max_{m<n} \Bignorm{\sum_{i=m}^n Q(y_i)}=\max_{m<n}
  \norm{Q\circ P^E_{[m,n]}(y)}\ .
  \]
  We let $\Yt$ be the completion of $\coo(\oplus_{i=1}^\infty \Et_i)$
  with respect to $\tnorm{\cdot}$. Since $(E_i)$ is a bimonotone FDD
  in $Y$, we have $\tnorm{\yt}\kleq\norm{y}$ for all
  $y\kin\coo(\oplus_{i=1}^\infty E_i)$, and hence the map
  $y\mapsto\yt$ extends to a norm one map from $Y$ to $\Yt$. By the
  definition of $\tnorm{\cdot}$ we have $\norm{Qy}\kleq\tnorm{\yt}$
  for any $y\kin\coo(\oplus_{i=1}^\infty E_i)$. It follows that
  $\yt\mapsto Q(y)$ extends to a norm one map $\Qt\colon \Yt\to X$
  with $\Qt(\yt)\keq Q(y)$ for all $y\kin Y$.

  In order to continue our proof we will need the following proposition
  from~\cite{OS2}.

  \begin{prop}
    \label{prop:quotient-norm}
    \cite[Proposition 2.6]{OS2}
    \begin{mylist}{(a)}
    \item[{(a)}]
      $(\Et_i)$ is a bimonotone, shrinking FDD for $\Yt$.
    \item[{(b)}]
      $\Qt$ is a quotient map from $\Yt$ onto $X$. More precisely if
      $x\kin X$ and $y\kin Y$ is such that $Q(y)\keq x$, $\norm{y}\keq
      \norm{x}$ and $y\keq\sum y_i$ with $y_i\kin E_i$ for all
      $i\kin\bn$, then $\yt\keq\sum \yt_i\kin\Yt$, $\tnorm{\yt}\keq
      \norm{y}$ and $\Qt(\yt)\keq x$.
    \item[{(c)}]
      Let $(\yt_i)$ be a block sequence of $(\Et_n)$ in $B_{\Yt}$,
      and assume that $(\Qt(\yt _i))$ is a basic sequence with
      projection constant $\Kb$ and that $a\keq\inf_i
      \norm{\Qt(\yt_i)}\kge 0$. Then for all  $(a_i)\kin\coo$ we have
      \begin{equation*}
	\Bignorm{\sum a_i \Qt(\yt _i)}\leq \Bigtnorm{\sum a_i\yt_i}
	\leq \frac{3\Kb}{a} \Bignorm{\sum a_i \Qt(\yt _i)}\ .
      \end{equation*}
    \end{mylist}
  \end{prop}

  To finish the proof of Theorem~\ref{thm:embedding}~(b) it suffices
  to find a constant $L\kle\infty$, a subsequence $(v'_i)$ of $(v_i)$,
  and a blocking $\Gt\keq(\Gt_i)$ of $(\Et_i)$ with the following
  property. For each $x\kin S_X$ there exists a $\yt\keq\sum \yt_i\kin
  \Yt$, $\yt_i\kin \Gt_i$ for all $i\kin\bn$, such that
  \begin{align}
    \label{eq:embedding(b);new-quotient1}
    &\norm{\Qt(\yt)-x}<1/2\ ,\\
    &\label{eq:embedding(b);new-quotient2}
    \Bignorm{\sum_{j=1}^\infty\tnorm{P^{\Gt}_{[n_{j-1},n_j)}(\yt)}
      \kcdot v'_{n_{j-1}}}_V\leq L\\
    &\ \text{for any choice of }k\ \text{and}\ 1\kleq n_0\kle n_1\kle
      n_2\kle\dots\text{ in }\bn\ .\notag
  \end{align}
  Once this is accomplished, we consider the space
  $\Yt^{V_N}\keq \Yt^{V_N}(\Gt)$, where $N\kin\vegtelen{\bn}$ is
  chosen so that $(v_i)_{i\in N}$ is the subsequence $(v'_i)$ of
  $(v_i)$. Given $x\keq x_0\kin S_X$, the property of
  $\Gt$ allows us to recursively choose $x_n\kin\frac1{2^n}B_X$ and
  $\yt_n\kin \frac{L}{2^{n-1}}B_{\Yt^{V_N}}$, $n\kin\bn$, so that
  $x_n\keq
  x_{n-1}\kminus\Qt(\yt_n)$ for all $n\kin\bn$. It follows that
  $\sum_{n=1}^\infty \yt_n $ converges in $\Yt^{V_N}$ with
  $\norm{\sum_{n=1}^\infty \yt_n}_{\Yt^{V_N}}\kleq 2L$ and
  $\Qt(\sum_{n=1}^\infty \yt_n)\keq x$. Thus $\Qt\colon\Yt^{V_N}\to X$
  remains surjective, which finishes the proof.

  In order to show the existence of a suitable blocking $\Gt$ of $\Et$
  we need the following result from~\cite{OS2}.
  
  \begin{lem}
    \label{lem:kill-the-overlap} \cite[Lemma  2.7]{OS2}
    Assume that~\eqref{eq:embedding(b);almost-diagonal} holds for our
    original map $Q\colon Y\to X$. Then there exist integers $0\keq
    N_0\kle N_1\kle\dots$ so that if for each $i\kin\bn$ we define
    \begin{equation*}
      \begin{split}
	& C_i=\bigoplus_{j=N_{i-1}+1}^{N_i} E_j\ ,\qquad
	D_i=\bigoplus_{j=N_{i-1}+1}^{N_i} F_j,\\
	& L_i = \left\{ j\kin\bn:\,N_{i-1}\kle
	j\kleq\frac{N_{i-1}+N_i}2\right\}\ ,\\
	& R_i = \left\{ j\kin\bn:\,\frac{N_{i-1} +
	  N_i}2\kle j\kleq N_i \right\}\ ,\\
	& C_{i,L} = \bigoplus_{j\in L_i} E_j\qquad\text{and}\qquad
	C_{i,R} = \bigoplus_{j\in R_i} E_j \ ,
      \end{split}
    \end{equation*}
    then the following holds. Let $x\kin S_X$, $0\kleq m\kle n$ and
    $\ve\kge 0$, and assume that $\norm{x\kminus
    P^D_{(m,n)}(x)}\kle\ve$. Then there exists $y\kin B_Y$ with $y\kin
    C_{m,R}\oplus \Big(\bigoplus_{i\in (m,n)} C_i\Big)\oplus C_{n,L}$
    (where $C_{0,R}=\{0\}$) and $\norm{Qy\kminus x}\kle K(2\ve
    +\ve_{m+1})$ (recall that $K$ is the projection constant of
    $(F'_i)$ in $Z$).
  \end{lem}

  Let $(C_i)$ and $(D_i)$ be the blockings given by
  Lemma~\ref{lem:kill-the-overlap}. Note that the sequence $(N_i)$ in
  the lemma used to define these blockings will not be needed in the
  sequel, so we can discard it. We now apply
  Proposition~\ref{prop:x=sum-of-sb} with $(A_i)\keq(D_i)$ and
  $\etab\keq\veb$ to obtain a sequence $N_1\kle N_2\kle\dots$ in $\bn$
  so that the conclusions of the proposition are satisfied. Let
  $(v''_i)$ be a subsequence of $(v_i)$ such that if $(x_i)\ksubset
  S_X$ is a $\deltab$-skipped block sequence of $(D_n)$ in $Z$ with
  $\norm{x_i-P^D_{(r_{i-1},r_i)}x_i}_Z\kle\delta_i$ for all $i\kin\bn$,
  where $1\kleq r_0\kle r_1\kle r_2\kle\dots$, then $(v''_{r_{i-1}})$
  is $2CD$-dominated by $(x_i)$, and moreover, if $(z_i)\ksubset Z$
  satisfies $\norm{x_i\kminus z_i}\kle \delta_i$ for all $i\kin\bn$,
  then $(z_i)$ is a basic sequence equivalent to $(x_i)$ with
  projection constant at most $2K$. Let $(v'_i)$ be the subsequence of
  $(v_i)$ defined by setting $v'_i\keq v''_{N_i}$ for all
  $i\kin\bn$. We now come to our final blockings: for each $i\kin\bn$
  set $G_i\keq\bigoplus_{j=N_{i-1}+1}^{N_i} C_j$ and let
  $H_i\keq\bigoplus _{j=N_{i-1}+1}^{N_i} D_j$ ($N_0=0$). Put
  $G\keq(G_i)$, let $\Gt\keq(\Gt_i)$ be the corresponding blocking of
  $(\Et_i)$, and set $H\keq(H_i)$.

  Fix a sequence $(e_i)$ in $S_X$ so
  that~\eqref{eq:embedding(b);gaps-in-sb} holds. Let $x\kin S_X$. By
  the choice of $N_1, N_2, \dots$, for each $i\kin\bn$,  there are
  $x_i\kin (K+1)B_X$ and $t_i\kin(N_{i-1},N_i)$ such that
  $x\keq\sum_{i=1}^\infty x_i$ and for all $i\kin\bn$ either
  $\norm{x_i}\kle\ve_i$ or $\norm{P^D_{(t_{i-1},t_i)} x_i\kminus
  x_i}\kle\ve_i \norm{x_i}$ ($t_0\keq 0$). For each $i\kin\bn$ let
  $\xb_i\keq\frac{x_{i+1}}{\norm{x_{i+1}}}$ and
  $\alpha_i\keq\norm{x_{i+1}}$ if $\norm{x_{i+1}}\kgeq \ve_{i+1}$, and
  let $\xb_i\keq e_{N_i}$ and $\alpha_i\keq 0$ if
  $\norm{x_{i+1}}\kle\ve_{i+1}$.

  Since
  \begin{equation}
    \label{eq:embedding(b);xb-sb}
    \norm{\xb_i-P^D_{(t_i,t_{i+1})}(\xb_i)}<\ve_{i+1}\qquad\text{for
      all}\ i\kin\bn\ ,
  \end{equation}
  there exists $(y_i)\ksubset B_Y$ with $y_i\kin C_{t_i,R}\oplus
  \Big(\bigoplus_{j\in (t_i,t_{i+1})} C_j\Big)\oplus C_{t_{i+1},L}$
  and
  \begin{equation}
    \label{eq:embedding(b);lift-xb}
    \norm{Q(y_i) -\xb_i}< 3K\ve_{i+1}\ ,\qquad i\kin\bn\ .
  \end{equation}
  Also, if $\norm{x_1}\kle\ve_1$, then set $y_0\keq 0$, and if
  $\norm{x_1}\kgeq \ve_1$, then choose $y_0\kin (K+1)B_Y$ such that
  $y_0\kin \Big(\bigoplus_{j\in (0,t_1)} C_j\Big)\oplus
  C_{t_1,L}\ksubset G_1$ and $\norm{Q(y_0)\kminus x_1}\kle 3K(K\kplus
  1)\ve_1$.

  Set $\xb\keq x_1\kplus\sum_{i=1}^{\infty} \alpha_i\xb_i$, and note
  that (this series converges and) by~\eqref{eq:embedding(b);Delta}
  and~\eqref{eq:embedding(b);epsilon}
  \begin{equation}
    \label{eq:embedding(b);xb-close-to-x}
    \norm{x-\xb}\leq\sum_{i=2}^{\infty}\ve_i<\frac14\ .
  \end{equation}
  As a $\deltab$-skipped block sequence of $(D_i)$
  (this follows from~\eqref{eq:embedding(b);xb-sb}
  and~\eqref{eq:embedding(b);epsilon}), $(\xb_i)$ is a basic sequence
  with projection constant at most $2K$ that $2CD$-dominates
  $(v''_{t_i})$. Since, by~\eqref{eq:embedding(b);lift-xb},
  $\norm{\Qt(\yt_i)\kminus\xb_i}\kle 3K\ve_{i+1}\kle\delta_i$ for all
  $i\kin\bn$, the sequence $\big(\Qt(\yt_i)\big)$ is also a basic
  sequence with projection constant at most $2K$ and is equivalent to
  $(\xb_i)$. Furthermore, we have $\inf_i\norm{\Qt(\yt_i)}\kgeq \inf_i
  \big(\norm{\xb_i}\kminus\delta_i\big)\kge 6/7$, and thus, by
  Proposition~\ref{prop:quotient-norm}~(c),
  \begin{equation}
    \label{eq:embedding(b);yt-equiv-to-xb}
    \Bignorm{\sum a_i \Qt(\yt _i)} \leq \Bigtnorm{\sum a_i\yt _i}
    \leq 7K \Bignorm{\sum a_i \Qt(\yt _i)}\qquad\text{for all}\
    (a_i)\kin\coo\ .
  \end{equation}
  Thus $(\yt_i)$ is a basic sequence equivalent to $(\xb_i)$ and, in
  particular, $\sum_{i=1}^{\infty} \alpha_i\yt_i$ converges. Putting
  $\yt\keq\yt_0\kplus\sum_{i=1}^{\infty} \alpha_i\yt_i$ we have
  \begin{align*}
    \norm{\Qt\yt-\xb} &\leq \norm{\Qt\yt_0-x_1}+ \sum_{i=1}^{\infty}
    \abs{\alpha_i}\kcdot \norm{\Qt\yt_i-\xb_i}\\
    &\leq 3K(K+1)\sum_{i=1}^{\infty}\ve_i<1/4\ ,
  \end{align*}
  and hence, by~\eqref{eq:embedding(b);xb-close-to-x},
  $\norm{\Qt\yt-x}<1/2$, so we
  have~\eqref{eq:embedding(b);new-quotient1}.

  We now fix integers $1\kleq n_0\kle n_1\kle n_2\kle\dots$. We have
  $\yt_i\kin\Gt_i\oplus\Gt_{i+1}$ for each $i\kin\bn$, and
  $\yt_0\kin\Gt_1$. It follows that
  \begin{multline}
    \label{eq:embedding(b);new-quotient3}
    \Bignorm{\sum_{s=1}^\infty\tnorm{P^{\Gt}_{[n_{s-1},n_s)}(\yt)}
      \kcdot v'_{n_{s-1}}}_V \leq \tnorm{\yt_0}+
    \biggnorm{\sum_{s=1}^\infty \alpha_{n_{s-1}-1} \cdot
      v'_{n_{s-1}}}_V\\
    + \biggnorm{\sum_{s=1}^\infty\Bigtnorm{\sum _{i=n_{s-1}}^{n_s-1}
	\alpha_i\yt_i} \kcdot v'_{n_{s-1}}}_V\ ,
  \end{multline}
  where we put $\alpha_0\keq 0$ in case $n_0\keq 1$.
  We now show how to bound each of the three terms of the right-hand
  side of the above inequality, and hence
  obtain~\eqref{eq:embedding(b);new-quotient2} with $L\keq
  126CD^2K^3$.

  We already have $\tnorm{\yt_0}\kleq K\kplus 1$. Since $(\xb_i)$
  $2CD$-dominates $(v''_{t_i})$ we get
  \begin{align*}
    2CD^2\Bignorm{\sum_i\alpha_i\xb_i}_Z &\geq D\Bignorm{\sum_i
      \alpha_i \kcdot v''_{t_i}}_V\\
    &\geq \Bignorm{\sum_i\alpha_i \kcdot v'_{i+1}}_V\\
    \intertext{(since $v'_{i+1}\keq v''_{N_{i+1}}$ and $N_{i+1}\kge
      t_i$ for all $i\kin\bn$)}
    &\geq \Bignorm{\sum_{s=1}^\infty \alpha_{n_{s-1}-1} \cdot
      v'_{n_{s-1}}}_V\ .
  \end{align*}
  Moreover, it follows from~\eqref{eq:embedding(b);xb-close-to-x} that
  \begin{equation}
    \label{eq:embedding(b);sum-alpha-xb}
    \Bignorm{\sum_i\alpha_i\xb_i}_Z=\norm{\xb-x_1}_Z\leq K+3\ .
  \end{equation}
  This yields the bound of $2CD^2(K\kplus 3)$ for the second term
  of~\eqref{eq:embedding(b);new-quotient3}.

  For each $s\kin\bn$ let $\wt_s\keq\sum_{i=n_{s-1}}^{n_s-1}
  \alpha_i\yt_i$ and $b_s\keq\sum_{i=n_{s-1}}^{n_s-1}
  \alpha_i\xb_i$. Note that by~\eqref{eq:embedding(b);xb-sb}
  and~\eqref{eq:embedding(b);epsilon}
  \begin{align}
    \label{eq:embedding(b);bs-sb}
    \bignorm{b_s-P^D_{(t_{n_{s-1}},t_{n_s})}(b_s)} &\leq
    \sum_{i=n_{s-1}}^{n_s-1} \abs{\alpha_i}\kcdot 2K\kcdot
    \norm{\xb_i-P^D_{(t_i,t_{i+1})} \xb_i}\\
    &<2K(K+1)\sum_{i=n_{s-1}}^{n_s-1} \ve_{i+1}<\delta_s^2 &
    \text{for all }s\kin\bn\ .\notag
  \end{align}
  For each $s\kin\bn$ set $\bbar_s\keq\frac{b_s}{\norm{b_s}}$ and
  $\beta_s\keq\norm{b_s}$ if $\norm{b_s}\kgeq\delta_s$, and set
  $\bbar_s\keq\xb_{n_{s-1}}$ and $\beta_s\keq 0$ if
  $\norm{b_s}\kle\delta_s$. It follows
  from~\eqref{eq:embedding(b);bs-sb} and~\eqref{eq:embedding(b);xb-sb}
  that $(\bbar_s)\ksubset S_X$ is a $\deltab$-skipped block sequence
  of $(D_i)$ in $Z$ with $\norm{\bbar_s\kminus
  P^D_{(t_{n_{s-1}},t_{n_s})}(\bbar_s)}\kle\delta_s$ for all
  $s\kin\bn$, and hence it is a basic sequence that $2CD$-dominates
  $(v''_{t_{n_{s-1}}})$.

  From~\eqref{eq:embedding(b);lift-xb}
  and~\eqref{eq:embedding(b);yt-equiv-to-xb} we have
  \begin{align}
    \label{eq:embedding(b);wt-lifts-b}
    \norm{\Qt(\wt_s)-b_s} \leq& \sum_{i=n_{s-1}}^{n_s-1}
    \abs{\alpha_i}\kcdot \norm{\Qt(\yt_i)-\xb_i}\\
    <&3K(K+1)\sum_{i=n_{s-1}}^{n_s-1}\ve_{i+1}
    <\delta_s\notag\\
    \intertext{and}
    \label{eq:embedding(b);wt-equiv-to-b}
    \tnorm{\wt_s}\leq& 7K\norm{\Qt(\wt_s)} &\text{for all }s\kin\bn\
    .
  \end{align}

  We now obtain the following sequence of inequalities.
  \begin{align*}
    \Bignorm{\sum _{s=1}^\infty \tnorm{\wt_s}\kcdot v'_{n_{s-1}}}_V
    &\leq 7K \Bignorm{\sum _{s=1}^\infty \norm{\Qt(\wt_s)}\kcdot
      v'_{n_{s-1}}}_V
    &\text{(from~\eqref{eq:embedding(b);wt-equiv-to-b})}\\
    & \leq 7K \Bignorm{\sum _{s=1}^\infty \norm{b_s}\kcdot
      v'_{n_{s-1}}}_V+7K\Delta
    &\text{(from~\eqref{eq:embedding(b);wt-lifts-b})}\\
    &\leq 7K\Bignorm{\sum_{s=1}^\infty \beta_s
      v'_{n_{s-1}}}_V+14K\Delta\\
    &\leq 7KD\Bignorm{\sum_{s=1}^\infty \beta_s v''_{t_{n_{s-1}}}}_V
    +14K\Delta\\
    \intertext{(as $(v_i)$ is $D$-left-dominant)}
    &\leq 14CD^2K\Bignorm{\sum_{s=1}^\infty \beta_s\bbar_s}
    +14K\Delta \\
    \intertext{(since $(\bbar_s)$ $2CD$-dominates
      $(v''_{t_{n_{s-1}}})$)}
    &\leq \text{\makebox[0pt][l]{$14CD^2K\Bignorm{\sum_{i=n_0}^\infty
	  \alpha_i\xb_i}+14CD^2K\Delta+14K\Delta$\ .}}
  \end{align*}
  Finally, since $(\xb_i)$ is a basic sequence with projection
  constant at most $2K$, it follows
  from~\eqref{eq:embedding(b);sum-alpha-xb} that
  \[
  \Bignorm{\sum_{i=n_0}^\infty \alpha_i\xb_i}_Z \leq 2K
  \Bignorm{\sum_{i=1}^\infty \alpha_i\xb_i}_Z\leq 2K(K+3)\ .
  \]
  This provides an upper bound of $116CD^2K^3$ for the third term
  of~\eqref{eq:embedding(b);new-quotient3}, which
  leads~\eqref{eq:embedding(b);new-quotient2} with $L\keq 126CD^2K^3$,
  as claimed. This completes the proof of part~(b) of
  Theorem~\ref{thm:embedding}.
\end{proof}

\section{Universal constructions and applications}
\label{section:universal}

Let $V$ and $U$ be reflexive spaces with normalized,
$1$-unconditional, block-stable bases $(v_i)$ and $(u_i)$,
respectively, such that $(v_i)$ is left-dominant, $(u_i)$ is
right-dominant and $(v_i)$ is dominated by $(u_i)$. For each
$C\kin[1,\infty)$ let $\cA_{V,U}(C)$ denote the class of all
separable, infinite-dimensional, reflexive Banach spaces that satisfy
subsequential $C$-$(V,U)$-tree estimates. We also let
\[
\cA_{V,U}=\bigcup_{C\in[1,\infty)} \cA_{V,U}(C)\ ,
\]
which is the class of all separable, infinite-dimensional, reflexive
Banach spaces that satisfy subsequential $(V,U)$-tree estimates.

\begin{thm}
  \label{thm:universal}
  The class $\cA_{V,U}$ defined above contains an element which is
  universal for the class.

  More precisely, for all $B,D,L,R\kin[1,\infty)$ there exists a
  constant $\Cb\keq\Cb(B,D)\kin[1,\infty)$ and for all
  $C\kin[1,\infty)$ there is a constant $K(C)\keq
  K_{B,D,L,R}(C)\kin[1,\infty)$ such that if $(v_i)$ is
  $B$-block-stable and $L$-left-dominant, if $(u_i)$ is
  $B$-block-stable and $R$-right-dominant, and if $(v_i)$ is
  $D$-dominated by $(u_i)$, then there exists $Z\kin\cA_{V,U}$ such
  that for all $C\kin[1,\infty)$ every $X\kin\cA_{V,U}(C)$
  $K(C)$-embeds into $Z$, and moreover $Z$ has a bimonotone FDD
  satisfying subsequential $\Cb$-$(V,U)$ estimates in $Z$.
\end{thm}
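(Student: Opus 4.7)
The plan is to combine Theorem~\ref{thm:lower-and-upper-embedding'} with a universal-construction step. By that theorem, for every $C\kin[1,\infty)$ there is a constant $K_0(C)\keq K(C,L,R)$ such that each $X\kin\cA_{V,U}(C)$ $K_0(C)$-embeds into a reflexive space with a bimonotone FDD satisfying subsequential $\Cb_0$-$(V,U)$ estimates, where $\Cb_0\keq\Cb(B,D,R)$. Let $\cC\ksubset\cA_{V,U}$ denote this intermediate coordinatized subclass. It then suffices to produce a single $Z\kin\cA_{V,U}$, with a bimonotone FDD satisfying subsequential $\Cb$-$(V,U)$ estimates, into which every member of $\cC$ embeds with uniformly controlled constant, and then to compose with the reduction above to obtain $K(C)\keq K_{B,D,L,R}(C)$.

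To build $Z$, I first extract a countable dense subfamily of $\cC$. For each sequence of dimensions $(d_i)\kin\bn^\bn$, the collection of all bimonotone norms on $\coo(\oplus_i\br^{d_i})$ under which the standard FDD satisfies subsequential $\Cb_0$-$(V,U)$ estimates is a closed subset of a compact metric space under pointwise convergence on $\coo$, hence separable. Taking a countable dense subset for each of the countably many $(d_i)$ produces a family $\{(Y_n,(G_i^n)):n\kin\bn\}\ksubset\cC$ with the property that every $W\kin\cC$ is, for every $\ve\kge 0$, coordinatewise $(1\kplus\ve)$-isomorphic to some $Y_n$.

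Next, assemble $Z$ by interleaving the FDDs of the $Y_n$'s along a partition $\bn\keq\bigsqcup_n N_n$ into pairwise disjoint infinite sets, placing $G_i^n$ at the $i^{\text{th}}$ position of $N_n$. This gives a natural bimonotone FDD $(E_j)_{j\in\bn}$ on the algebraic direct sum of the $Y_n$'s whose restriction to each $N_n$ reproduces $(G_i^n)$. Equip this direct sum with a norm modeled on the construction of Section~\ref{section:zv}: first take the bimonotone sum making each $Y_n$ embed isometrically, then apply the $V$-operation of Section~\ref{section:zv} to enforce subsequential $V$-lower estimates via Lemma~\ref{lem:ZV_has-V-lower-est}, and invoke Lemma~\ref{lem:ZV-keeps-U-upper-est} to preserve the $U$-upper estimates inherited from the $Y_n$'s. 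Reflexivity of $Z$ follows from Corollary~\ref{cor:refl-ZV}. Each $W\kin\cC$ is $(1\kplus\ve)$-isomorphic to some $Y_n$, which sits inside $Z$ as the coordinate-block-subspace indexed by $N_n$, giving the universal embedding.

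The main obstacle lies in the third step: ensuring that the interleaved norm satisfies subsequential $\Cb$-$(V,U)$ estimates \emph{globally}, since block sequences of $(E_j)$ can mix contributions from arbitrarily many $Y_n$'s, and in producing $\Cb$ depending only on $B,D$ rather than on $L,R$. The first difficulty is handled by block-stability of $(v_i),(u_i)$ together with the left-/right-dominance assumptions, which let mixed block estimates collapse to subsequential estimates along a single enumeration. The second is addressed by first obtaining subsequential $V_N$-lower and $U_{N'}$-upper estimates (with $m_i\kleq n_i$ for the $i^{\text{th}}$ elements of $N,N'$, as in the proof of Theorem~\ref{thm:lower-and-upper-embedding}) and then upgrading to full $V$- and $U$-estimates through Lemma~\ref{lem:norm-est-for-subseq}, whose constants depend only on the block-stability and domination constants $B$ and $D$. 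Tracking all constants through this chain yields the quantitative statement.
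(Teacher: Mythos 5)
Your first step --- reducing via Theorem~\ref{thm:lower-and-upper-embedding'} to the coordinatized class $\cC$ of spaces with bimonotone FDDs satisfying subsequential $\Cb_0$-$(V,U)$ estimates --- matches the paper. But the universal-space construction that follows has a fatal gap at the very start: there is no countable family $\{Y_n\}$ such that every $W\kin\cC$ is coordinatewise $(1\kplus\ve)$-isomorphic to some $Y_n$. Separability of the set of norms in the topology of pointwise convergence on $\coo$ does not give $(1\kplus\ve)$-density in the coordinatewise-isomorphism sense: two norms can agree on any finite-dimensional piece and be inequivalent globally. In fact no such countable family can exist in general. For instance, with $V\keq\ell_p$ and $U\keq\ell_q$ ($1\kle q\kle p\kle\infty$) the class $\cC$ contains the unit vector bases of $\ell_r$ for all $r\kin[q,p]$, which are pairwise non-equivalent; if each were $2$-equivalent to a member of a countable family, two of them would be $4$-equivalent to each other. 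This is precisely the obstruction that the paper circumvents by invoking Schechtman's theorem: the universal space $W$ there contains every bimonotone FDD almost isometrically as a \emph{subsequence} $(E_{k_i})$ of its FDD (a $1$-complemented one), rather than as a member of a countable list. Your interleaving-along-a-partition device cannot substitute for this.

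Even granting a universal family, two further points are not addressed. First, the subsequential $U$-upper estimate for the FDD of the assembled space $Z$ does not follow from the upper estimates of the individual $Y_n$'s, since a block sequence of the interleaved FDD need not live in any single $Y_n$; the paper obtains the upper estimates by first performing the dual construction $\big(\Ws\big)^{U^*}(E^*)$ and dualizing via Proposition~\ref{prop:fdd-est-duality}, and then checks via Lemma~\ref{lem:ZV-keeps-U-upper-est} that the subsequent $Y^V(E)$ step preserves them. This is also how $\Cb$ ends up depending only on $B$ and $D$: the FDD estimates of $Z$ come from the $Z^V$-type constructions alone, while $L,R$ are absorbed into the embedding constant. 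Second, and most substantively, you must verify that the renormings do not destroy the embeddings: the $Z^V$ norm restricted to the block subspace carrying a given $X$ could a priori be much larger than the original norm. The paper's computation~\eqref{eq:universal;Y*<W*} shows, using that $(E_{k_i})$ already satisfies subsequential $2K_1$-$(V,U)$ estimates in $W$ together with block-stability and left/right-dominance, that $\norm{\cdot}_W$, $\norm{\cdot}_Y$ and $\norm{\cdot}_Z$ are all equivalent on $\coo(\oplus_i E_{k_i})$. Your appeal to ``block-stability collapsing mixed estimates'' gestures at the right tools but does not supply this argument.
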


\begin{proof}
  By a result of Schechtman~\cite{S} there exists a space $W$ with a
  bimonotone FDD $E\keq(E_i)$ with the property that any bimonotone
  FDD is naturally almost isometric to a subsequence
  $\overline{\bigoplus_{i=1}^\infty E_{k_i}}$ which is
  $1$-complemented in $W$. More precisely, given a Banach space $X$
  with a bimonotone FDD $(F_i)$ and given $\ve\kge 0$, there is a
  subsequence $(E_{k_i})$ of $(E_i)$ and a $(1\kplus\ve)$-embedding
  $T\colon X\to W$ such that $T(F_i)\keq E_{k_i}$ for all $i\kin\bn$,
  and $\sum_{i=1}^\infty P^E_{k_i}$ is a norm-$1$ projection of $W$
  onto $\overline{\bigoplus_{i=1}^\infty E_{k_i}}$.

  We shall now modify the norm on $W$ in two stages. We first consider
  the space $\big(\Ws\big)^{U^*}\big(E^*\big)$. By
  Corollary~\ref{cor:bdd-complete-ZV} the sequence $(E^*_i)$ is a
  boundedly complete (and bimonotone) FDD for this space. It follows
  that $(E_i)$ is a bimonotone, shrinking FDD for a space $Y$ with
  $Y^*\keq\big(\Ws\big)^{U^*}\big(E^*\big)$. By
  Lemma~\ref{lem:ZV_has-V-lower-est} and
  Proposition~\ref{prop:fdd-est-duality} $(E_i)$ satisfies
  subsequential $2B$-$U$-upper estimates in $Y$.

  We now let $Z\keq Y^V(E)$. By Corollary~\ref{cor:refl-ZV} $Z$ is
  reflexive, by Lemma~\ref{lem:ZV_has-V-lower-est} $(E_i)$ satisfies
  subsequential $2B$-$V$-lower estimates in $Z$, and by
  Lemma~\ref{lem:ZV-keeps-U-upper-est} $(E_i)$ also satisfies
  subsequential $(3BD\kplus 2B^2D)$-$U$-upper estimates in $Z$. Thus
  $(E_i)$ is a bimonotone FDD satisfying subsequential
  $\Cb$-$(V,U)$-estimates in $Z$, where $\Cb\keq 3BD\kplus 2B^2D$. It
  remains to show that $Z$ is universal for $\cA_{V,U}$.

  Let $C\kin[1,\infty)$ and let $X\kin\cA_{V,U}(C)$. By
  Theorem~\ref{thm:lower-and-upper-embedding'} there exist constants
  $K_1\keq K_1(B,D,R)$ and $K_2\keq K_2(C,L,R)$ in $[1,\infty)$ such
  that $X$ $K_2$-embeds into a reflexive space $\Xt$ which has a
  bimonotone FDD $(F_i)$ satisfying subsequential $K_1$-$(V,U)$
  estimates in $\Xt$. Now we can find a subsequence $(E_{k_i})$ of
  $(E_i)$ and a $2$-embedding $T\colon \Xt\to W$ such that $T(F_i)\keq
  E_{k_i}$ for all $i\kin\bn$ and $\sum_i P^E_{k_i}$ is a norm-$1$
  projection of $W$ onto $\overline{\bigoplus_iE_{k_i}}$. It follows
  in particular that $(E_{k_i})$ satisfies subsequential
  $2K_1$-$(V,U)$ estimates in $W$, \ie if $(w_i)$ is a normalized
  block sequence of $(E_{k_n})$ in $W$ with $\min\supp_E(w_i)\keq
  k_{m_i}$ for all $i\kin\bn$, then  $(w_i)$ $2K_1$-dominates
  $(v_{m_i})$ and is $2K_1$-dominated by $(u_{m_i})$. Hence by
  Proposition~\ref{prop:fdd-est-duality} $(E^*_{k_i})$ satisfies
  subsequential $2K_1$-$(U^*,V^*)$ estimates in $\Ws$. (Note that the
  dual of the subspace $\overline{\bigoplus_iE_{k_i}}$ of $W$ is
  naturally isometrically isomorphic to the subspace
  $\overline{\bigoplus_iE^*_{k_i}}$ of $\Ws$.) We shall now use this
  to show that the norms $\norm{\cdot}_W,\ \norm{\cdot}_Y$ and
  $\norm{\cdot}_Z$ are all equivalent when restricted to
  $\coo(\oplus_i E_{k_i})$, which implies that $\Xt$ and hence also
  $X$ embed into $Z$.

  Fix $w^*\kin\coo(\oplus_iE^*_{k_i})$. Clearly we have
  $\norm{w^*}_{\Ws}\kleq \norm{w^*}_{Y^*}$. Choose $1\kleq m_0\kle
  m_1\kle\dots$ in $\bn$ such that
  \[
  \norm{w^*}_{Y^*}=\Bignorm{\sum_{i=1}^\infty
  \norm{P^{E^*}_{[m_{i-1},m_i)} w^*}_{\Ws}\kcdot u^*_{m_{i-1}}}_{U^*}\
  .
  \]
  We may assume that $m_0\keq 1$ and $P^{E^*}_{[m_{i-1},m_i)} w^*\kneq
  0$ for all $i\kin\bn$. Then there exist $j_1\kle j_2\kle\dots$ in
  $\bn$ such that $k_{j_i}\keq \min\supp_{E^*}P^{E^*}_{[m_{i-1},m_i)}
  w^*$ for all $i\kin\bn$. Since $(u^*_i)$ is $B$-block-stable and
  $R$-left-dominant, and since $(E^*_{k_i})$ satisfies subsequential
  $2K_1$-$U^*$-lower estimates in $\Ws$, we have
  \begin{align}
    \label{eq:universal;Y*<W*}
    \norm{w^*}_{Y^*} &\leq B\Bignorm{\sum_{i=1}^\infty
      \norm{P^{E^*}_{[m_{i-1},m_i)} w^*}_{\Ws}\kcdot
      u^*_{k_{j_i}}}_{U^*}\\
    &\leq BR\Bignorm{\sum_{i=1}^\infty
      \norm{P^{E^*}_{[m_{i-1},m_i)} w^*}_{\Ws}\kcdot
      u^*_{j_i}}_{U^*}\notag \\
    &\leq  2BRK_1 \norm{w^*}_{\Ws}\ .\notag
  \end{align}
  This shows that $\norm{\cdot}_{\Ws}$ and $\norm{\cdot}_{Y^*}$ are
  equivalent on  $\coo(\oplus_iE^*_{k_i})$. It is easy to verify that
  $\sum_i P^{E^*}_{k_i}$, which defines a norm-$1$ projection of $\Ws$
  onto $\overline{\bigoplus_iE^*_{k_i}}$, is also a norm-$1$
  projection of $Y^*$ onto $\overline{\bigoplus_iE^*_{k_i}}$. It
  follows that
  $\frac1{2BRK_1}\norm{w}_W\kleq\norm{w}_Y\kleq\norm{w}_W$ for all
  $w\kin\coo(\oplus_iE_{k_i})$. 

  A very similar argument shows that $\norm{y}_Y\kleq\norm{y}_Z \kleq
  2BLK_1\norm{y}_Y$ for all $y\kin\coo(\oplus_iE_{k_i})$. Indeed, the
  first inequality is clear from the definition of $\norm{\cdot}_Z$,
  whereas the second one is obtained by a computation similar to the
  one in~\eqref{eq:universal;Y*<W*}.

  We have thus shown that the $2$-embedding $T\colon\Xt\to W$ becomes
  a $8B^2LRK_1^2$-embedding viewed as a map $\Xt\to Z$. Hence $X$
  $K$-embeds into $Z$, where $K\keq 8B^2LRK_1^2K_2$.
\end{proof}

We conclude this paper with two applications of our embedding
theorems. The first one is the observation that our results here give
an alternative proof to the main theorem in~\cite{OS2}.

\begin{thm}[\cite{OS2}]
  Let $X$ be a separable, reflexive Banach space and let $1\kleq q\kleq
  p\kleq\infty$. The following are equivalent.
  \begin{mylist}{(a)}
  \item[(a)]
    $X$ satisfies $(p,q)$ tree estimates.
  \item[(b)]
    $X$ is isomorphic to a subspace of a reflexive space $Z$ having an
    FDD which satisfies $(p,q)$ estimates.
  \item[(c)]
    $X$ is isomorphic to a quotient of a reflexive space $Z$ having an
    FDD which satisfies $(p,q)$ estimates.
  \end{mylist}
\end{thm}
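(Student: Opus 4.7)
The plan is to specialize the framework of this paper to $V\keq\ell_q$ and $U\keq\ell_p$, and deduce the theorem from Theorem~\ref{thm:lower-and-upper-embedding} together with the duality results of Section~\ref{section:prelim}. The unit vector bases of $\ell_q$ and $\ell_p$ are $1$-symmetric, hence trivially $1$-block-stable, $1$-left-dominant and $1$-right-dominant, and the $\ell_q$-basis is $1$-dominated by the $\ell_p$-basis since $q\kleq p$. By symmetry, subsequential $(V,U)$-estimates in the sense of Section~\ref{section:prelim} coincide with the classical $(p,q)$-estimates of~\cite{OS2}, and the same reduction applies to tree estimates. The genuinely interesting range is $1\kle q\kleq p\kle\infty$; the endpoint cases $q\keq 1$ and $p\keq\infty$ are vacuous, since an $\ell_1$-lower or $\ell_\infty$-upper tree estimate would force a copy of $\ell_1$ or $c_0$ inside a reflexive space.

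I would begin with the two easy implications. For (b)$\Rightarrow$(a), given an embedding $X\khookrightarrow Z$ where $Z$ has an FDD $(E_i)$ with $(p,q)$-estimates and a normalized weakly null even tree $(x_\alpha)_{\alpha\in\odd}$ in $X$, a gliding-hump recursion produces indices $n_1\kleq n_2\kleq\dots$ such that the branch vectors $x_{(n_1,\dots,n_{2\ell})}$ are summably close to a block sequence of $(E_i)$; the branch then inherits the $(p,q)$-estimates from the FDD. The implication (c)$\Rightarrow$(a) follows from (b)$\Rightarrow$(a) by dualisation: a quotient map $Q\colon Z\to X$ gives an isometric embedding $Q^*\colon X^*\khookrightarrow Z^*$, and by Proposition~\ref{prop:fdd-est-duality} $Z^*$ carries an FDD with $(q^*,p^*)$-estimates, whence $X^*$ has $(q^*,p^*)$-tree estimates by the case just proved; Proposition~\ref{prop:tree-est-duality} together with Corollary~\ref{prop:full-tree-est-duality} then transport these back to $(p,q)$-tree estimates for $X\keq X^{**}$.

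The main implication (a)$\Rightarrow$(b) is now an immediate application of Theorem~\ref{thm:lower-and-upper-embedding}: the hypotheses on $(v_i)$ and $(u_i)$ are trivially verified, and $X$ satisfies $(V,U)$-tree estimates by assumption, so $X$ embeds in a reflexive space with an FDD satisfying $(V,U)$-estimates. For (a)$\Rightarrow$(c) I would again pass to duals: by Proposition~\ref{prop:tree-est-duality} and Corollary~\ref{prop:full-tree-est-duality}, $X^*$ satisfies $(q^*,p^*)$-tree estimates; applying (a)$\Rightarrow$(b) to $X^*$ embeds it into a reflexive $Y$ with an FDD satisfying $(q^*,p^*)$-estimates; by Proposition~\ref{prop:fdd-est-duality} $Y^*$ then has an FDD with $(p,q)$-estimates, and $X\keq X^{**}$ is a quotient of $Y^*$.

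The principal obstacle is that Theorem~\ref{thm:lower-and-upper-embedding} only directly produces the subspace embedding~(b); to obtain the quotient statement~(c) one must pivot through $X^*$, and this only works because the duality statements of Section~\ref{section:prelim} dovetail precisely. The $X\leftrightarrow X^*$ correspondence simultaneously exchanges subspace with quotient and exchanges $(p,q)$-lower/upper with $(q^*,p^*)$-upper/lower, both at the tree-estimate level via Proposition~\ref{prop:tree-est-duality} and Corollary~\ref{prop:full-tree-est-duality}, and at the FDD level via Proposition~\ref{prop:fdd-est-duality}. Once that symmetry is recognised, the entire proof of the OS2 theorem reduces to bookkeeping, as the essential content has already been absorbed into the general $(V,U)$-framework.
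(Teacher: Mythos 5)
Your overall route is exactly the one the paper intends (the paper offers no written proof beyond the remark that the theorem follows by specializing the $(V,U)$-machinery, and your chain of implications --- Theorem~\ref{thm:lower-and-upper-embedding} for (a)$\Rightarrow$(b), gliding hump for (b)$\Rightarrow$(a), and the duality results Propositions~\ref{prop:fdd-est-duality}, \ref{prop:tree-est-duality} and Corollary~\ref{prop:full-tree-est-duality} to convert between the subspace and quotient statements --- is the correct bookkeeping). However, your specialization has the roles of $V$ and $U$ reversed, and this makes one of your stated claims false. A $(p,q)$ estimate in the sense of \cite{OS2} is an $\ell_p$-\emph{lower} together with an $\ell_q$-\emph{upper} estimate, so you must take $V\keq\ell_p$ and $U\keq\ell_q$, not the other way around; and the domination hypothesis of Theorem~\ref{thm:lower-and-upper-embedding} then holds because the unit vector basis of $\ell_p$ is $1$-dominated by that of $\ell_q$ when $q\kleq p$ (the norm $\norm{\cdot}_{\ell_q}$ is the larger one). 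As you have written it ($V\keq\ell_q$, $U\keq\ell_p$, with ``the $\ell_q$-basis is $1$-dominated by the $\ell_p$-basis''), the domination goes the wrong way and ``subsequential $(V,U)$ estimates'' would mean $\ell_q$-lower plus $\ell_p$-upper, which is not the $(p,q)$ estimate. The same reversal infects your remark about the endpoints: for $q\keq1$ the relevant condition is an $\ell_1$-\emph{upper} estimate and for $p\keq\infty$ an $\ell_\infty$-\emph{lower} estimate, both of which hold trivially, so these cases are degenerate rather than vacuous. With $V$ and $U$ relabeled correctly the remainder of your argument (including the dual exponents $q^*,p^*$ in the quotient direction) goes through as written.
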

Here an FDD $(E_n)$ of a Banach space $Z$ is said to satisfy $(p,q)$
estimates if there is a constant $C\kge 0$ such that for every block
sequence $(x_i)$ of $(E_n)$ we have
\[
C^{-1} \bigg(\sum\norm{x_i}^p\bigg)^{1/p} \leq \Bignorm{\sum
  x_i} \leq C \bigg(\sum\norm{x_i}^q\bigg)^{1/q}\ ,
\]
and a Banach space $X$ is said to satisfy $(p,q)$ estimates if every
normalized, weakly null tree $(x_\alpha)_{\alpha\in T_\infty}$ in $X$
has a branch that dominates the unit vector basis of $\ell_p$ and that
is dominated by the unit vector basis of $\ell_q$. The family
$(x_\alpha)_{\alpha\in T_\infty}$ in $X$ is called a normalized,
weakly null tree if for all $\alpha\kin T_\infty\cup\{\emptyset\}$ the
sequence $\big(x_{(\alpha,n)}\big)$ is normalized and weakly null, and
a branch of $(x_\alpha)_{\alpha\in T_\infty}$ is a sequence
$\big(x_{(n_1,n_2,\dots,n_i)}\big)$, where $n_1\kle n_2\kle\dots$.

The second, and main, application concerns the existence of universal
spaces for the classes $C_\alpha$ defined in the Introduction. Recall
that for each countable ordinal $\alpha$ the class $C_\alpha$
consists of all separable, reflexive spaces $X$ such that both $X$ and
its dual $X^*$ have Szlenk index at most $\alpha$. Szlenk introduced
his index to show that there is no separable, reflexive space that
contains isomorphic copies of every separable, reflexive
space~\cite{Sz}.

The Szlenk index $\sz(\cdot)$ has the following properties~\cite{Sz}:
for a separable space $X$, $\sz(X)\kle\omega_1$ if and only if $X^*$
is separable (so $\bigcup_{\alpha<\omega_1} C_\alpha$ is the class of
all separable, reflexive spaces); if $Y$ embeds into $X$, then
$\sz(Y)\kleq\sz(X)$; for all $\alpha\kle\omega_1$ there exists a
separable, reflexive space $X$ such that $\sz(X)\kge\alpha$. From
these properties it follows immediately that if a separable space $Z$
contains isomorphic copies of every separable, reflexive space, then
$Z^*$ is not separable, and so $Z$ cannot be reflexive. (Later
J.~Bourgain showed that such a space $Z$ must contain
$C[0,1]$, and hence all separable Banach spaces~\cite{B}.)

It seems natural to ask if there is, for each countable ordinal
$\alpha$, a separable, reflexive space that is universal for
$C_\alpha$. This question was indeed raised by Pe\l czy\'nski
motivated by the results of~\cite{OS2}, which imply an affirmative
answer for $\alpha\keq\omega$. In~\cite{OSZ2} we show that Pe\l
czy\'nski's question has an affirmative answer for all
$\alpha\kle\omega_1$.

\begin{thm}
  For each countable ordinal $\alpha$ there is a separable, reflexive
  space $Z$ which is universal for the class $C_\alpha$.
\end{thm}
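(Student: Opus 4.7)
The plan is to apply Theorem~\ref{thm:universal} with suitably chosen reflexive Banach spaces $V\keq V_\alpha$ and $U\keq U_\alpha$ depending on the countable ordinal $\alpha$. This requires three ingredients: (i)~a construction of the base spaces $V_\alpha, U_\alpha$; (ii)~a characterization of $C_\alpha$ in terms of subsequential $(V_\alpha,U_\alpha)$-tree estimates; and (iii)~verification that the resulting universal space lives in $C_\alpha$.

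First, for each countable $\alpha$ I would build a pair of reflexive spaces $V_\alpha$ and $U_\alpha$ with normalized, $1$-unconditional, block-stable bases $(v_i)$ and $(u_i)$ such that $(v_i)$ is left-dominant, $(u_i)$ is right-dominant, $(v_i)$ is dominated by $(u_i)$, and both spaces satisfy $\sz(V_\alpha),\sz(U_\alpha)\kleq\alpha$ (together with the corresponding bounds on their duals). A natural candidate is a modified Tsirelson/Schreier-type construction indexed by the Schreier families $(\cS_\alpha)_{\alpha\kle\omega_1}$, combined with an $\ell_p$-type interpolation that makes the resulting basis subsymmetric up to block-stability and dominance. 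One would verify directly that the tree-ordinal index associated to weakly null branches in $V_\alpha$ coincides with $\alpha$, so that $V_\alpha,U_\alpha\kin C_\alpha$.

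Second, and most importantly, one must establish the coordinate-free characterization: a separable, reflexive space $X$ belongs to $C_\alpha$ if and only if $X$ satisfies subsequential $(V_\alpha,U_\alpha)$-tree estimates. The forward direction is the heart of the argument. Given $X\kin C_\alpha$, one uses the $w^*$-Szlenk derivation $s^{(\beta)}_\ve(B_{X^*})$ up to level $\alpha$ and $\sz(X^*)\kle\alpha$ to control both the existence of weakly null branches in $X$ and of weakly null branches in $X^*$. By Proposition~\ref{prop:game-tree-def} this is equivalent to S winning the corresponding $V_\alpha$-lower and $U_\alpha$-upper games, and an ordinal-index computation matches the $\alpha$-Szlenk derivation of $X$ with the tree ordinal measuring when a branch first $(V_\alpha,U_\alpha)$-dominates the node of indices $(k_i)$. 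The converse uses that the Szlenk index of $V_\alpha,U_\alpha$ is at most $\alpha$ plus standard stability of the Szlenk index under the relevant norm estimates.

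Third, one applies Theorem~\ref{thm:universal} to produce a reflexive space $Z$ with a bimonotone FDD satisfying subsequential $(V_\alpha,U_\alpha)$ estimates that is universal for $\cA_{V_\alpha,U_\alpha}$; by~(ii) every $X\kin C_\alpha$ embeds into $Z$. It then remains to show $Z\kin C_\alpha$, \ie $\sz(Z),\sz(Z^*)\kleq\alpha$. For this I would prove a general lemma: if a reflexive space $Z$ has an FDD satisfying subsequential $(V,U)$ estimates with $V,U\kin C_\alpha$, then $Z\kin C_\alpha$. The Szlenk index bound on $Z^*$ is obtained by noting that the FDD of $Z^*$ satisfies subsequential $(U^*,V^*)$ estimates (Proposition~\ref{prop:fdd-est-duality}), and on $Z$ itself one uses the lower $V$-estimate to embed any normalized block tree into $V$, transferring the Szlenk derivation levels.

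The main obstacle is step~(ii), the coordinatization of the Szlenk index hypothesis into the concrete subsequential $(V_\alpha,U_\alpha)$-tree estimate. This is where one must match two ordinal indices (one from Szlenk derivations, the other from tree branches in the norms of $V_\alpha,U_\alpha$), and it is the place where the additional machinery alluded to in the introduction (to be developed in~\cite{OSZ2}) is genuinely needed. Step~(i) is a nontrivial but routine construction of mixed Tsirelson spaces; step~(iii) reduces to an ordinal-index calculation once~(ii) is in hand.
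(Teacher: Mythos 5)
Your overall strategy --- build Tsirelson-type base spaces, coordinatize the Szlenk hypothesis as subsequential tree estimates, and feed the result into Theorem~\ref{thm:universal} --- is the same as the paper's, and like the paper you correctly locate the genuinely hard step (the passage from $\sz(X),\sz(X^*)\kleq\alpha$ to concrete tree estimates) in the machinery of~\cite{OSZ2}; the paper itself only sketches this and defers the proof. Two points of substance differ, and both matter.

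First, you ask for too much in step~(ii). The paper does not claim, and does not need, an ``if and only if'' with a \emph{single} pair $(V_\alpha,U_\alpha)$: what is proved in~\cite{OSZ2} is only the forward implication, and in the form ``for each $X\kin C_\alpha$ there exists $\gamma\kleq\alpha$ (depending on $X$) such that $X$ satisfies subsequential $((T_{\gamma,\frac12})^*,T_{\gamma,\frac12})$ tree estimates.'' Because the ordinal level (and, in the full version, the Tsirelson parameter) varies with $X$, no single application of Theorem~\ref{thm:universal} suffices. The paper therefore fixes a sequence $(\alpha_n)$ with $\alpha\keq\sup_n(\alpha_n\kplus 1)$, lets $Z_n$ be the universal space for $\cA_{((T_{\alpha_n,\frac12})^*,T_{\alpha_n,\frac12})}$ given by Theorem~\ref{thm:universal}, and takes $Z$ to be the $\ell_2$-direct sum of the $Z_n$ (which is still separable and reflexive, and into which every $X\kin C_\alpha$ embeds via some $Z_n$). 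This direct-sum step is absent from your proposal, and without it --- or without a proof that one fixed pair at level $\alpha$ captures all of $C_\alpha$, which is exactly what the varying $\gamma$ in~\cite{OSZ2} is designed to avoid --- the construction does not cover the whole class.

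Second, your step~(iii) is not required by the statement and should be dropped: the theorem only asks that $Z$ be separable, reflexive and universal for $C_\alpha$, not that $Z\kin C_\alpha$. Reflexivity comes for free from Theorem~\ref{thm:universal} (the universal space lies in $\cA_{V,U}$, hence is reflexive by Corollary~\ref{cor:refl-ZV}). Moreover the stronger claim $Z\kin C_\alpha$ is in general unattainable --- the paper pointedly says that the full version of the result ``determines the class $C_\beta$ in which the universal space $Z$ lives,'' with $\beta$ in general strictly larger than $\alpha$ --- so the ``general lemma'' you propose (an FDD with subsequential $(V,U)$ estimates, $V,U\kin C_\alpha$, forces $Z\kin C_\alpha$) cannot hold with $\beta\keq\alpha$ and is in any case a detour. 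Similarly, the converse direction of your characterization in step~(ii) is never used.
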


This is a simplified version of our result which also includes
estimates on embedding constants and determines the class $C_\beta$ in
which the universal space $Z$ lives. The proof, which is given
in~\cite{OSZ2}, splits into two parts. We first prove that if $X\kin
C_\alpha$, then there exists $\gamma\kle\alpha$ such that $X$
satisfies subsequential $((T_{\gamma,\frac12})^*,T_{\gamma,\frac12})$
tree estimates, where $T_{\gamma,\frac12}$ is the Tsirelson space of
order $\gamma$. The ingredient for the second part of the proof is
Theorem~\ref{thm:universal} from this paper. We fix a sequence
$(\alpha_n)$ of ordinals with $\alpha\keq\sup_n(\alpha_n\kplus 1)$,
and for each $n\kin\bn$ we let $Z_n$ be a separable, reflexive space
which is universal for the class
$\cA_{((T_{\alpha_n,\frac12})^*,T_{\alpha_n,\frac12})}$. The
$\ell_2$-direct sum $Z$ of the sequence $(Z_n)$ is then the required
universal space for $C_\alpha$.

\end{document}